
\documentclass[10pt]{article}

\usepackage{authblk}
\usepackage{fullpage}
\parindent = 0 pt
\parskip = 8 pt
\addtolength{\textheight}{0.4in}
\usepackage[tight]{minitoc}
\usepackage{setspace}
 \usepackage[titles]{tocloft}

\setlength{\parskip}{.8\baselineskip}%
\setlength{\parindent}{0pt}%
\setlength{\topmargin}{-.0in}
\setlength{\oddsidemargin}{-0cm}
\setlength{\evensidemargin}{-0cm}
\setlength{\textwidth}{16.5cm}
\setlength{\textheight}{22cm}
\setlength\parindent{0cm}

\usepackage{latexsym,amssymb,amsmath,amsthm,graphicx,bm,calligra}
\usepackage{thmtools}
\usepackage{thm-restate}
\usepackage{mathtools}

\usepackage[usenames, dvipsnames]{color}
\usepackage{tikz}
\usetikzlibrary{matrix}
\usetikzlibrary{pgfplots.groupplots}

\definecolor{greenA}{rgb}{0,0.5451,0.2706}

\usepackage{pgfplots}

\usepackage{authblk}
\usepackage{latexsym,amssymb,amsmath,amsthm,graphicx,float,bm}
\usepackage{enumerate}

\newtheorem{definition}{Definition}

\newtheorem{theorem}{Theorem}
\newtheorem{lemma}{Lemma}

\newtheorem{corollary}[theorem]{Corollary}
\newtheorem{proposition}{Proposition}

\newtheorem{example}{Example}
\def\bs{\ensuremath\boldsymbol}

\def\bs{\ensuremath\boldsymbol}

\def\z{\ensuremath{{\bf z}}}
\def\x{\ensuremath{{\bf x}}}
\def\Z{\ensuremath{{\bf Z}}}

\def\tr{\ensuremath\text{Tr}}
\def\mata{\ensuremath\text{Mat}}
\def\sign{\ensuremath\text{sign}}

\def\bs{\ensuremath\boldsymbol}

\def\bs{\ensuremath\boldsymbol}

\def\z{\ensuremath{{\bf z}}}

\numberwithin{equation}{section}

\begin{document}
\title{Stable rank one matrix completion is solved by two rounds of semidefinite programming relaxation.}
\author[1]{Augustin Cosse}
\affil[1]{
D\'epartement de Math\'ematiques et Applications,  \protect\\ Ecole Normale Sup\'erieure, Ulm, Paris. \protect\\ PSL Research University.}
\author[2]{Laurent Demanet}
\affil[2]{Department of Mathematics, Massachusetts Institute of Technology, MA}
\renewcommand\Authands{ and }

\maketitle

%


\begin{abstract}
This paper studies the problem of deterministic rank-one matrix completion. It is known that the simplest semidefinite programming relaxation, involving minimization of the nuclear norm, does not in general return the solution for this problem. In this paper, we show that in every instance where the problem has a unique solution, one can provably recover the original matrix through two rounds of semidefinite programming relaxation with minimization of the trace norm. We further show that the solution of the proposed semidefinite program is Lipschitz-stable with respect to perturbations of the observed entries, unlike more basic algorithms such as nonlinear propagation or ridge regression. Our proof is based on recursively building a certificate of optimality corresponding to a dual sum-of-squares (SOS) polynomial. This SOS polynomial is built from the polynomial ideal generated by the completion constraints and the monomials provided by the minimization of the trace.  The proposed relaxation fits in the framework of the Lasserre hierarchy, albeit with the key addition of the trace objective function. We further show how to represent and manipulate the moment tensor in favorable complexity by means of a hierarchical low-rank decomposition.

%
%

\end{abstract}
\hspace{1cm}\begin{minipage}{14.5cm}\date{\textbf{Acknowledgement.} Both authors were supported by a grant from the MISTI MIT-Belgium seed fund. AC was supported by the FNRS, FSMP, BAEF and Francqui Foundations. LD is supported by AFOSR grant FA9550-17-1-0316, ONR grant N00014-16-1-2122, and NSF grant DMS-1255203. AC is grateful to MIT Math, Harvard IACS, the University of Chicago as well as NYU Courant Institute and Center for Data Science for hosting him during this work.}\end{minipage}


\section{\label{SectionIntroduction}Introduction}

Low rank matrix completion has been studied extensively throughout the last few years, among other reasons because of its practical interest in machine learning and data science. Completion provides a useful tool to compress and manipulate large databases such as in genomics and finance, and to infer information from a few measurements such as in collaborative filtering or triangulation. Good introductions as well as recovery results for random designs and arbitrary ranks can be found in~\cite{candes2009exact, keshavan2010matrix}.

The objective of this paper is to provide an algorithm that solves the rank one case in a stable and comprehensive way. Let $\mathcal{M}(1;m\times n)$ denote the set of rank-$1$ matrices of size $m\times n$; we consider the problem of recovering an unknown rank one matrix $\bs X_0\in \mathcal{M}(1;m\times n),\; \bs X_0 = \bs x_0\bs y_0^T$ when we are given $\mathcal{O}(m+n)$ entries from this matrix, possibly corrupted by an additive noise $\varepsilon$. We do not make any assumption on the noise. In the noiseless case, this problem reads
\begin{align}\begin{split}
\text{find} \quad & \bs X\in \mathbb{R}^{m\times n}\\
\text{subject to} \quad & \text{rank}(\bs X) = 1\\
&  \bs X_{ij} = (\bs X_0)_{ij} \quad (i,j)\in \Omega,\end{split}%
\label{matrixcompletion}%
\end{align}%
where $\Omega$ denotes the set of measurements. As a slight abuse, we will also speak of constraints $\{\bs X_{ij} - (\bs X_0)_{ij} = 0\}_{(i,j)\in \Omega}$ as belonging to the set $\Omega$. In the noisy case, the data fit constraint is relaxed to $\| \bs X_{ij} - ((\bs X_0)_{ij} + \varepsilon_{ij}) \| \leq \sigma$ in a standard fashion.

Clearly, one cannot always solve problem~\eqref{matrixcompletion}. For example, if no information is known on a given column (resp. row), it becomes impossible to recover the entries corresponding to this column (resp. row). Another limitation occurs when the rank-1 matrix has a zero entry; then the corresponding row or column will be zero, and the completion problem will generically lack injectivity. As an illustration of the issue with zero entries, consider the problem where the first row and last column are known and are both trivial. The number of measurements is $(m+n-1)$. However in this case, any matrix $\bs X\in \mathbb{R}^{m\times n}$ of the form $\bs X = \bs v\bs  w^*$ with $v_1 = w_n = 0$ is a valid solution of the problem. For this reason, we consider the completion problem on $\mathcal{M}^*(1,m\times n)$, where $\mathcal{M}^*(1,m\times n)$ denotes the restriction of $\mathcal{M}(1;m\times n)$ to matrices for which none of the entries are zero. 

To formalize the notion of injectivity, we introduce the mapping $\mathcal{R}_{\Omega}:\mathbb{R}^{m\times n}\rightarrow \mathbb{R}^{|\Omega|}$ that corresponds to extracting the observed entries of the matrix. We let $\mathcal{R}_\Omega^1$ denote the restriction of $\mathcal{R}_\Omega$ to matrices of rank-$1$ that have no zero rows/columns. Invertibility of this restriction $\mathcal{R}_\Omega^1$ corresponds to asking whether one can uniquely recover the matrix $\bs X$ from the knowledge of $\mathcal{R}_\Omega(\bs X)$ and the fact that $\bs X$ has rank $1$. Let us denote by $\mathcal{V}_1$, $\mathcal{V}_2$ the sets of row and column indices of $X$. We consider the bipartite graph $\mathcal{G}(\mathcal{V}_1,\mathcal{V}_2,\mathcal{E})$ associated to problem~\eqref{matrixcompletion}, where the set of edges is defined by $(i,j)\in \mathcal{E}$ iff $(i,j)\in \Omega$. The vertices of the bipartite graph $\mathcal{G}$ corresponding to $\bs X$ are labeled by the corresponding row and column indices. The conditions for the recovery of the matrix $\bs X$ from the set $\Omega$ are related to the properties of this bipartite graph as expressed by the following lemma which can be found, for example, in~\cite{kiraly2012combinatorial}:

\begin{lemma}[\label{necessaryandsufficient}Rank-1 completion]
The mask $\mathcal{R}_\Omega$ is injective on $\mathcal{M}^*(1;m\times n)$ if and only if $\mathcal{G}$ is connected.
\end{lemma}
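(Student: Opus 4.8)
The plan is to recast injectivity of $\mathcal{R}_\Omega^1$ as a statement about a multiplicative gauge freedom, and then read off connectivity from how that freedom propagates through the graph $\mathcal{G}$. Concretely, suppose two matrices $\bs X = \bs x\bs y^\top$ and $\bs X' = \bs x'(\bs y')^\top$ in $\mathcal{M}^*(1;m\times n)$ agree on $\Omega$. Since no entry vanishes, every coordinate $x_i, y_j, x'_i, y'_j$ is nonzero, so the ratios $r_i := x'_i/x_i$ and $s_j := y'_j/y_j$ are well-defined nonzero reals. Agreement on an observed entry $(i,j)\in\Omega$ reads $x'_i y'_j = x_i y_j$, i.e. $r_i s_j = 1$; and $\bs X = \bs X'$ holds exactly when $r_i s_j = 1$ for all pairs $(i,j)$, equivalently when there is a single scalar $t$ with $r_i \equiv t$ and $s_j \equiv t^{-1}$. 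Thus injectivity is precisely the statement that the only solutions of $\{\, r_i s_j = 1 : (i,j)\in\Omega \,\}$ are these global-scaling solutions.

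For the forward implication I would assume $\mathcal{G}$ connected and propagate the relations $r_i s_j = 1$ along paths. Fixing the value $r_{i_0} = t$ at one row vertex, every edge $(i,j)$ forces $s_j = r_i^{-1}$ and $r_i = s_j^{-1}$, so walking along any path alternately reproduces the value $t$ on row vertices and $t^{-1}$ on column vertices. Because $\mathcal{G}$ is connected every vertex is reached, giving $r_i \equiv t$ and $s_j \equiv t^{-1}$; consistency around cycles is automatic, since two edges sharing a common column vertex immediately equate their row ratios. Hence $\bs X' = \bs X$ on \emph{all} entries, and $\mathcal{R}_\Omega^1$ is injective.

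For the converse I would argue by contraposition and build an explicit ambiguity when $\mathcal{G}$ is disconnected. Let $C$ be one connected component, so that $C \subsetneq \mathcal{V}_1 \cup \mathcal{V}_2$, and set $r_i = 2$ for rows $i \in C$, $s_j = 1/2$ for columns $j \in C$, and $r_i = s_j = 1$ elsewhere; this defines $\bs x', \bs y'$ with no zero entries, hence $\bs X' \in \mathcal{M}^*(1;m\times n)$. On any observed entry both endpoints lie in the same component, so the two scalings cancel and $\bs X'$ agrees with $\bs X$ on $\Omega$. On the other hand, since there is a vertex outside $C$, one finds a pair $(i,j)$ with exactly one endpoint in $C$ (a row of $C$ together with an outside column, or an outside row together with a column of $C$, the remaining cases being symmetric); for such a pair the product $r_i s_j \in \{2, 1/2\}$ differs from $1$, so $\bs X'_{ij} \neq \bs X_{ij}$ while $(i,j)\notin\Omega$. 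This produces two distinct preimages and defeats injectivity.

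The argument is essentially combinatorial, and the only points demanding care — which I expect to be the main, if minor, obstacle — are bookkeeping ones: checking that the ratio reduction is legitimate (this is exactly where the restriction to $\mathcal{M}^*$ and the absence of zero entries is used), verifying cycle-consistency of the propagation rather than merely assuming a tree, and confirming in the disconnected case that a \emph{changing} entry always exists and is necessarily unobserved, including the degenerate components consisting of a single isolated row or column.
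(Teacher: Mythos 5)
Your proof is correct. The paper does not actually prove this lemma---it is imported from \cite{kiraly2012combinatorial}---so there is no in-paper argument to compare against; your gauge-ratio reduction (writing $r_i=x_i'/x_i$, $s_j=y_j'/y_j$, noting that agreement on $\Omega$ means $r_is_j=1$ on edges and that equality of the matrices means the ratios are a single global scaling) is the standard argument and is precisely the mechanism the paper then exploits informally in its propagation chains \eqref{chain1a}--\eqref{generalchain}. The two points you flag as requiring care---that the ratios are well defined only because the entries are nonzero (this is where $\mathcal{M}^*$ enters), and that in the disconnected case a pair with exactly one endpoint in the chosen component $C$ always exists and is automatically unobserved, including when $C$ is an isolated row or column---are indeed the only places the argument could break, and you address both.
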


Lemma~\eqref{necessaryandsufficient} has an interesting consequence. Within the noiseless framework, rank one matrix completion can be solved exactly through a nonlinear propagation approach. To understand this, let us write $\bs X = \bs x\bs y^T \in \mathbb{R}^{m\times n}$ with $x_1=1$. Let us further use $\z\in \mathbb{R}^{m+n-1}$ to denote the concatenation of  $[x_2,\ldots, x_m]\in \mathbb{R}^{m-1}$ and $\bs y\in\mathbb{R}^n$, $\z = (\bs x,\,\bs y)$. When we deal with the rank one case, an implication of lemma~\ref{necessaryandsufficient} is that for all $x_n$, $y_m$, the bipartite graph corresponding to the mask $\Omega$ always contains at least one connected path starting with an edge corresponding to an element of the first row and for which the series of existing edges corresponds to running through $X$ according to chains of constraints such as
\begin{align}
y_{i_1} \rightarrow y_{i_1} x_{i_2} \rightarrow x_{i_2} y_{i_3}\rightarrow \ldots  y_{i_{L+1}} x_n \qquad \mbox{(to reach $x_n$)}
\label{chain1a} \\
y_{i'_1} \rightarrow y_{i'_1} x_{i'_2} \rightarrow x_{i'_2} y_{i'_3}\rightarrow \ldots  x_{i'_{L'+1}} y_m \qquad \mbox{(to reach $y_m$)}
\label{chain2a} 
\end{align}
More generally, the two chains~\eqref{chain1a} and~\eqref{chain2a} can read, using the vector $\z\in \mathbb{R}^{m+n-1}$,
\begin{align}
z_{i_1} \rightarrow z_{i_1} z_{i_2} \rightarrow z_{i_2} z_{i_3}\rightarrow \ldots  z_{i_{L+1}} z_n. 
\label{generalchain} 
\end{align}
In other words, each of the entries of $\bs x$ and $\bs y$ can always be related to an element of the first row whose value is known because of the normalization $x_1=1$. Each of the elements making up the bilinear constraints can then be obtained in the absence of noise by iteratively propagating the value of the elements of the first row through~\eqref{generalchain}. As we explain in the sequel, such a propagation scheme however lacks robustness to noise, especially, when the magnitude of the entries is on the order of the magnitude of the noise.

When the measurements are corrupted by noise, a popular approach is to turn to minimization of the nuclear norm as a proxy for the rank (see~\cite{fazel02ma, recht10gu} for early references). However, the nuclear norm does not always guarantee recovery of the rank one matrix $\bs X_0$ when the noise vanishes. An important gap regarding rank-one matrix completion has thus been the lack of an algorithm providing a proper (deterministic) stability estimate of the form
\begin{align}
\|\bs X - \bs X_0\|\leq \omega(\|\mathcal{R}_\Omega(\bs X) - \mathcal{R}_\Omega( \bs X_0)\|).
\end{align}
for some Lipschitz function $\omega(\tau)$ obeying $\omega(\tau) \rightarrow 0$ when $\tau\rightarrow 0$.

We can now state the main contributions of the paper.
\begin{itemize}
\item First, we show that rank-one matrix completion can be solved through two rounds of semidefinite programming relaxation. Our result is sharp in terms of measurements; recovery is always possible as soon as the nonlinear problem has a unique solution. This is in contrast to previous results that required a random measurement set~\cite{candes2010matrix}. This result also confirms that there exist instances of rank minimization problems that are solved in a comprehensive way (without constraints of incoherence and/or randomness) using a fixed, higher ($>1$) number of rounds of semidefinite programming relaxation.

\item Second, we show that when the measurements are corrupted by noise, the solution to the semidefinite relaxation remains proportional to the noise level. In particular, this solution is shown to be Lipschitz-stable with respect to the noise level. This is in contrast with nonlinear approaches such as~\cite{kiraly2012combinatorial,kiraly2015algebraic}. 

\item Finally, our proof system, based on constructing a dual polynomial, incidentally reveals two important facts: First, minimization of the trace norm helps certify recovery because it provides additional squares of monomials that are useful in constructing the dual polynomial. Second, recovery can be related to the possibility of propagating known information through the graph by means of polynomial equations.
\end{itemize}

The next sections discuss the limitations of propagation, minimization of the nuclear norm, and ridge regression. We illustrate these limitations on the simple problem of completing the rank-one matrix $\bs X_0$ with a small parameter $\delta$,
\begin{align}
\bs X_0 = \left(\begin{array}{cc}
1 & ? \\
\delta & 1 
\end{array}\right),\label{matrixEpsilon}
\end{align}
for which, given the rank one constraint, the only missing entry is obviously given by $1/\delta$.

\subsection{\label{propagationIsUnstable}Propagation is unstable}

We start by discussing the simple propagation scheme. In the noiseless framework, this scheme can be efficiently applied by writing $\bs X_0$ as
\begin{align}
\bs X_0 = \left(\begin{array}{ccc}
X_{11} & X_{12}\\
X_{21} & X_{22} 
\end{array}\right),
\end{align}
and simply deriving $X_{12}$ as $X_{12} = \frac{X_{22}X_{11}}{X_{21}}$. Now assume that the entries are corrupted by a noise $\bs\epsilon$ so that the measurements are now given by $\tilde{X}_{11} = (X_0)_{11}+ \varepsilon_{11}$, $\tilde{X}_{21}  = (X_0)_{21} + \varepsilon_{21}$ and $\tilde{X}_{22} = (X_0)_{22} + \varepsilon_{22}$ with $\varepsilon = (\varepsilon_{11}, \varepsilon_{21}, \varepsilon_{22})\in \mathbb{R}^3$. Taking a noise $\varepsilon$ with $\|\varepsilon \|$ on the order of $\delta$, such as for example $\varepsilon_{21} = -.9\delta$, will result in important errors when using propagation as shown below,
\begin{align}
\tilde{X}_{12}  = \frac{\tilde{X}_{22}\tilde{X}_{11}}{\tilde{X}_{21}} = \frac{1+\mathcal{O}(\delta)}{\delta+ \varepsilon_{21}}\sim 10\frac{1}{\delta}\qquad \frac{\tilde{X}_{12} - (X_{0})_{12}}{(X_0)_{12}} = 900\%
\end{align}
The estimate derived through the propagation algorithm are thus unreliable when the entries are corrupted by an unknown noise $\varepsilon$ of magnitude comparable to the smallest entries in the matrix. In addition, there is no effective, general method to select the propagation path optimally.

Another elementary method consists in taking the logarithm of the constraints, and solving the resulting system to obtain the logarithm of the unknowns. It is a very reasonable method for some positive matrices, although it is easy to see that it suffers from a similar kind of instability as the propagation scheme.

\subsection{Nuclear norm fails}

In this section, we briefly study how nuclear norm minimization would perform in the same framewok of problem~\eqref{matrixEpsilon} as before. Nuclear norm minimization was first formalized in~\cite{fazel02ma, recht10gu} and guarantees were given, for the matrix completion problem, in a probabilitic framework, in~\cite{candes10ma}. Nuclear norm minimization relies on solving the convex program
\begin{subequations}
\label{nnminimization1}
\begin{align}
\mbox{minimize} &\quad  \|\bs X\|_*\\
\mbox{subject to} &\quad \bs X_{ij} = (\bs X_0)_{ij},\quad (i,j)\in \Omega.
\end{align}
\end{subequations}
Where $\| \bs X\|_*$ is used as a proxy for the rank. In the case of~\eqref{matrixEpsilon}, for a sufficiently small $\delta$, if we let 
\begin{align}
\bm X = \left(\begin{array}{ccc}
1 & 2/\delta\\
\delta & 1 \end{array}\right)\label{deltaMat}
\end{align}
it can be easily verified that $\|\bs X\|_*<\|\bs X_0\|_*$, $\mbox{rank}(\bs X)= 2$, and the nuclear norm minimization~\eqref{nnminimization1} thus doesn't return the unknown matrix $\bs X_0$ despite the fact that a sufficient amount of measurements are provided. 

In the case of a symmetric, positive semidefinite matrix $\bs X$, program~\eqref{nnminimization1} becomes minimization of the trace under the constraint $\bs X \succeq 0$. When the diagonal is fully measured, it is known that this formulation succeeds at recovering the original matrix $\bs X_0$ when the completion problem is well-posed \cite{demanetjugnon,demanetjugnon2}.

\subsection{\label{RidgeRegression}Ridge regression has local minima}

For the sake of completeness, we briefly discuss ridge regression (a.k.a Tikhonov regularization) on the rank one factorization. This approach has gained in popularity over the last years and is equivalent to solving the quartic regularized problem. In fact it is natural to wonder whether the semidefinite programming formulation of this paper which relies on the minimization of the trace norm is not simply a form of ridge regression. This section precisely refutes this idea. The ridge regression problem reads
\begin{align}
\min & \|\mathcal{R}_{\Omega}(\bs x\bs y^T) - \mathcal{R}_\Omega(\bm X_0 + \varepsilon)\|_F^2 + \lambda(\|\bs x\|^2+ \|\bs y\|^2 )\label{quartic}
\end{align}
\begin{figure}[h!]
\hspace{-.6cm}
 \begin{minipage}[b]{0.3\textwidth}\centering
   \includegraphics[width=1.2\textwidth, trim=0cm .5cm 8.3cm 0cm,clip=true]{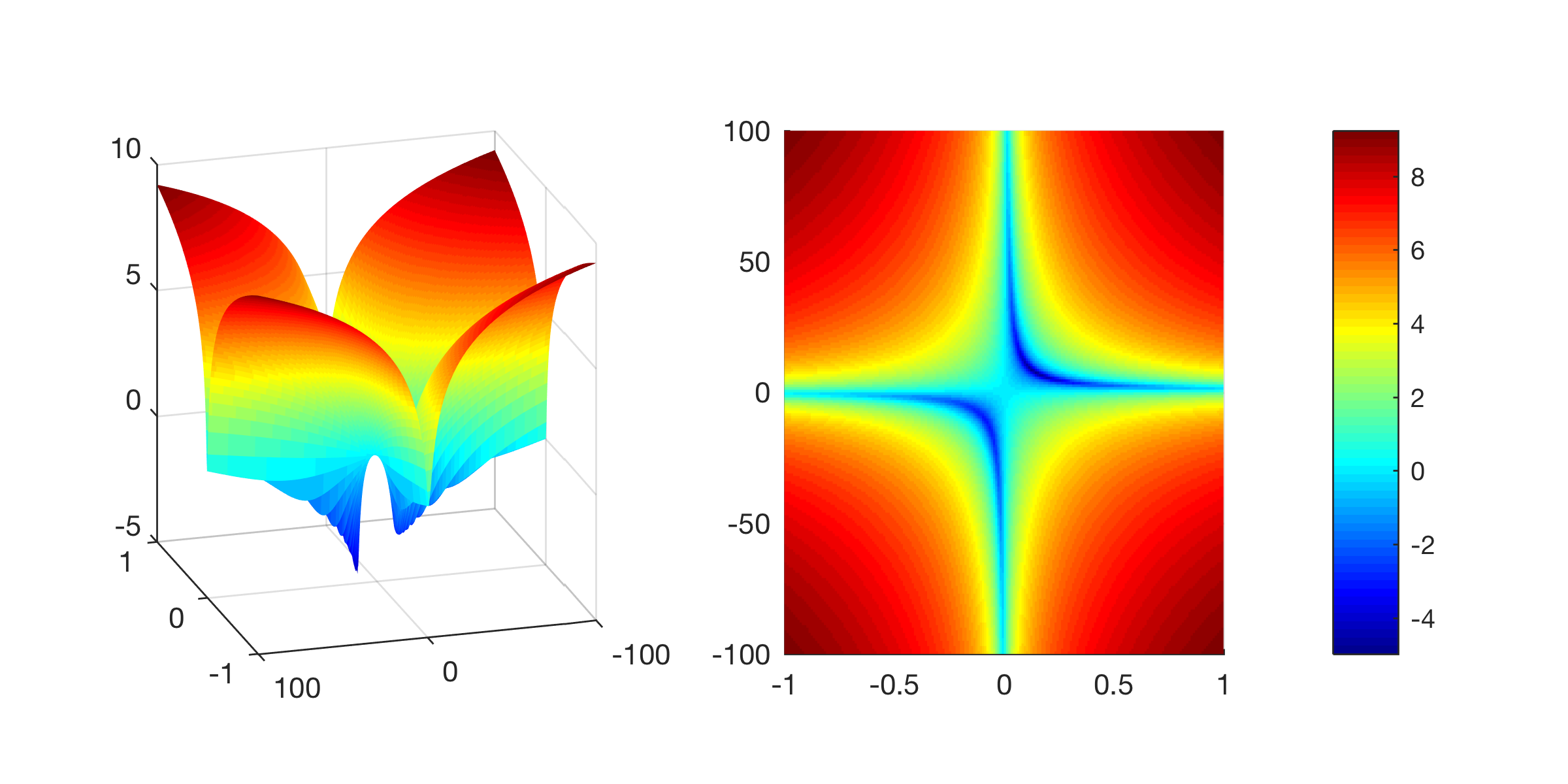}
  \end{minipage}\hspace{.8cm}
 \begin{minipage}[b]{0.3\textwidth}\centering
    \includegraphics[width=1.2\textwidth, trim=0cm .5cm 0cm 0cm,clip=true]{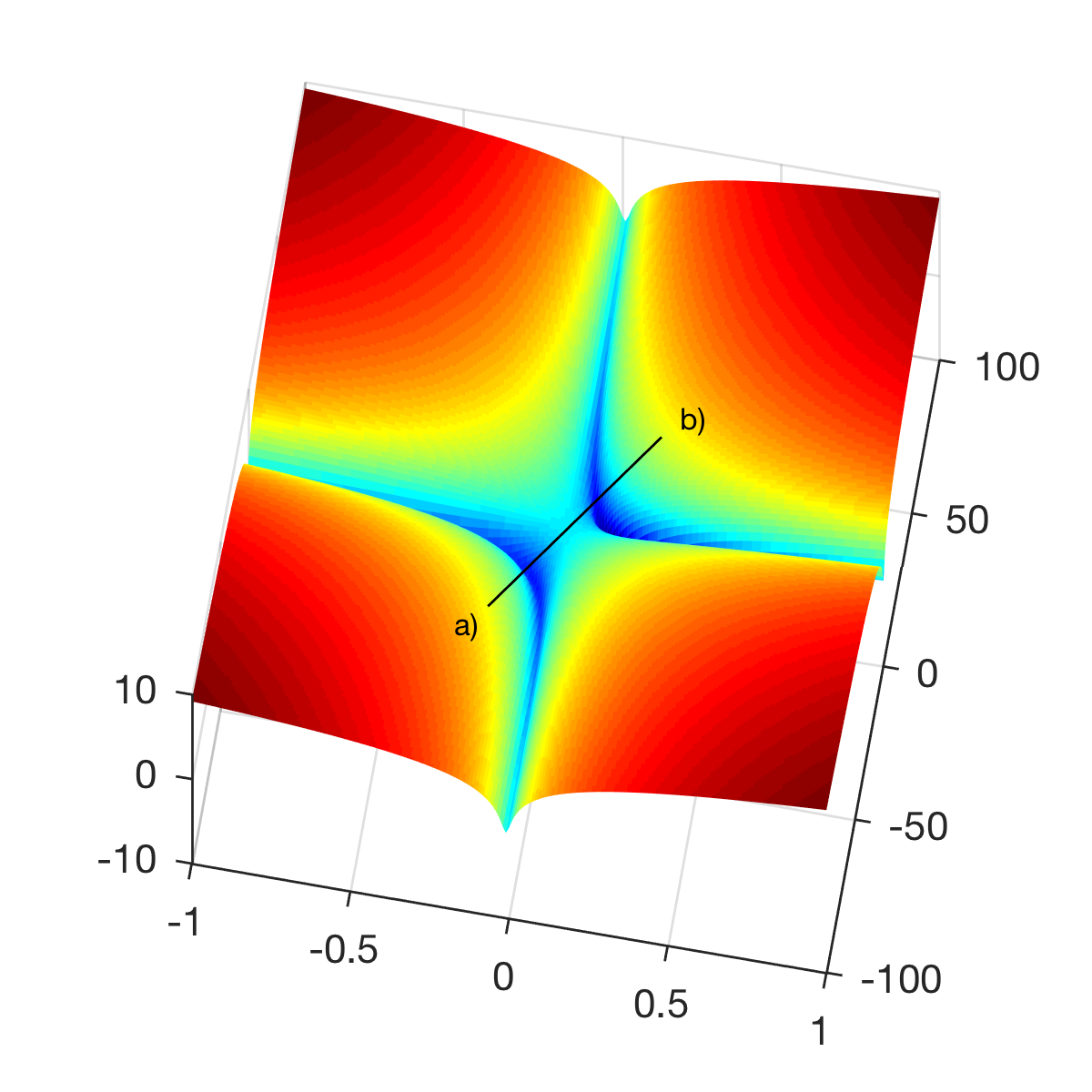}
  \end{minipage}\hspace{1.2cm}
  \begin{minipage}[b]{0.3\textwidth}\centering
%
%
\definecolor{mycolor1}{rgb}{0.00000,0.44700,0.74100}%
\begin{tikzpicture}

\begin{axis}[%
width=1.329in,
height=1.954in,
at={(1.011in,0.642in)},
scale only axis,
xmin=0,
xmax=1,
ymode=log,
ymin=0.01,
ymax=10,
yminorticks=true,
axis background/.style={fill=white}
]
\addplot [color=mycolor1, forget plot, line width=1.5pt]
  table[row sep=crcr]{%
0	9.130004\\
0.01	8.0884041209\\
0.02	7.1294516036\\
0.03	6.2493750881\\
0.04	5.4444809744\\
0.05	4.7111534225\\
0.06	4.0458543524\\
0.07	3.4451234441\\
0.08	2.9055781376\\
0.09	2.4239136329\\
0.1	1.99690289\\
0.11	1.6213966289\\
0.12	1.2943233296\\
0.13	1.0126892321\\
0.14	0.7735783364\\
0.15	0.5741524025\\
0.16	0.4116509504\\
0.17	0.2833912601\\
0.18	0.1867683716\\
0.19	0.1192550849\\
0.2	0.07840196\\
0.21	0.0618373169\\
0.22	0.0672672356\\
0.23	0.0924755561\\
0.24	0.1353238784\\
0.25	0.1937515625\\
0.26	0.2657757284\\
0.27	0.3494912561\\
0.28	0.4430707856\\
0.29	0.5447647169\\
0.3	0.65290121\\
0.31	0.7658861849\\
0.32	0.8822033216\\
0.33	1.0004140601\\
0.34	1.1191576004\\
0.35	1.2371509025\\
0.36	1.3531886864\\
0.37	1.4661434321\\
0.38	1.5749653796\\
0.39	1.6786825289\\
0.4	1.77640064\\
0.41	1.8673032329\\
0.42	1.9506515876\\
0.43	2.0257847441\\
0.44	2.0921195024\\
0.45	2.1491504225\\
0.46	2.1964498244\\
0.47	2.2336677881\\
0.48	2.2605321536\\
0.49	2.2768485209\\
0.5	2.28250025\\
0.51	2.2774484609\\
0.52	2.2617320336\\
0.53	2.2354676081\\
0.54	2.1988495844\\
0.55	2.1521501225\\
0.56	2.0957191424\\
0.57	2.0299843241\\
0.58	1.9554511076\\
0.59	1.8727026929\\
0.6	1.78240004\\
0.61	1.6852818689\\
0.62	1.5821646596\\
0.63	1.4739426521\\
0.64	1.3615878464\\
0.65	1.2461500025\\
0.66	1.1287566404\\
0.67	1.0106130401\\
0.68	0.893002241600001\\
0.69	0.777285044900001\\
0.7	0.664900010000001\\
0.71	0.557363456900001\\
0.72	0.4562694656\\
0.73	0.3632898761\\
0.74	0.2801742884\\
0.75	0.208750062500001\\
0.76	0.1509223184\\
0.77	0.1086739361\\
0.78	0.0840655556000001\\
0.79	0.0792355769\\
0.8	0.0964001599999999\\
0.81	0.1378532249\\
0.82	0.2059664516\\
0.83	0.303189280099999\\
0.84	0.432048910399999\\
0.85	0.5951503025\\
0.86	0.7951761764\\
0.87	1.0348870121\\
0.88	1.3171210496\\
0.89	1.6447942889\\
0.9	2.02090049\\
0.91	2.4485111729\\
0.92	2.9307756176\\
0.93	3.47092086409999\\
0.94	4.07225171239999\\
0.95	4.7381507225\\
0.96	5.47207821439999\\
0.97	6.27757226809999\\
0.98	7.15824872359999\\
0.99	8.11780118089999\\
1	9.16000099999999\\
};
\end{axis}
\end{tikzpicture}%
  \end{minipage}
\caption{\label{IllPosed1} Representation of the ridge regression energy landscape for a rank-one matrix completion problem with $m=n=1$. We consider the minimization problem $f(x,y) = \|xy - 1\|^2 +\|y-\delta\|^2$, plus regularization terms such as in~\eqref{quartic}. Without regularization, there is one infimum at $(-\infty, 0)$, and one minimum at $(\delta,1/\delta)$ (In the figure above, we take $\delta = .1$). With regularization the infimum located at $(-\infty, 0)$ becomes a minimum and moves closer to the origin such as shown above. The evolution of the loss function on the line joining the two minima is displayed by the figure on the right.}
\end{figure}
%
%
%
%
The most popular way to solve problem~\eqref{quartic} is through gradient descent. However, when several measurements are given and no convergence guarantee is known, it is not clear how to initialize the algorithm. We choose to follow standard practice and initialize it with the singular value decomposition of the matrix $\sum_{ij}(\mathcal{P}_{\Omega}(\bs X))_{ij}e_ie_j^T$ and taking the outerproduct of the corresponding top singular vectors weighted by their singular value. As shown by Fig.~\ref{IllPosed1}, the landscape underlying this formulation suffers from a lack of convexity. As a consequence, even in the absence of noise when the matrix size is sufficiently large, and the number of measurements is sufficiently close to the recoverability limit, ridge regression will fail to return the global minimizer.

\subsection[Algorithm: two rounds of SDP relaxation]{\label{introductionToAlgo}Algorithm: two rounds of semidefinite relaxation}

When minimizing the nuclear norm of rank one matrices, one only enforces constraints on monomials of degree at most two on the entries of the generating vectors ${\bs x}$ and ${\bs y}$. The nuclear norm was shown in~\cite{fazel02ma} to be equivalent to the following semidefinite program,
\begin{align}\begin{split}
\mbox{minimize}& \quad \mbox{Tr}(\bs W)\\
\mbox{subject to}& \quad \bs W = \left(\begin{array}{cc}
\bs X_{11} & \bs X_{12}\\
 \bs X_{21} & \bs X_{22}
\end{array}\right)\\
& \quad (\bs X_{12})_{ij} = (X_0)_{ij},\quad (i,j)\in \Omega,\\
&\quad \bs W\succeq 0.
\end{split}\label{nnminimization}
\end{align}
When $\bs X_0 = \bs x_0\bs y_0^T$, the matrix $\bs W$ is a proxy for the rank one matrix 
\begin{align}
\bs W_0 = \left(\begin{array}{cc}
\bs x_0 \bs x_0^T &  \bs x_0 \bs y_0^T\\
\bs y_0 \bs x_0^T & \bs y_0 \bs y_0^T
\end{array}\right)
\end{align} 
The positive semidefinite constraint on $\bs X$ is thus used in combination with the trace norm, as a convex relaxation of the rank one constraint. It is interesting to note that formulation~\eqref{nnminimization} only optimizes over monomials of bidegree $(1,1)$. The key idea of the "second round of lifting" is to extend this type of formulation to monomials of higher degree in the original unknowns. Introducing $\z_0 = (\bs x_0,\bs y_0)$ and $\z_0^{(2)}=\mbox{vec}(\z_0^{\otimes^2}) = \mbox{vec}(\z_0 \otimes \z_0)$, we consider the larger matrix $\bs M_0$ defined as 
\begin{align}
\bs M_0 = \left(\begin{array}{ccc}
1 & \z_0 & \z_0^{(2)} \\
\z_0 & \z_0^{\otimes^2} &  \z_0\otimes\z_0^{(2)}\\
\z_0^{(2)} & \z_0^{(2)}\otimes \z_0 & \z_0^{(2)}\otimes \z_0^{(2)}
\end{array}\right),\label{secondRound}
\end{align}
In~\eqref{secondRound}, we thus have $\bs W_0 = \z_0^{\otimes^2}$. At order two, the semidefinite relaxation considers as unknowns all the entries of a positive semi-definite proxy $\bs M$ of the same structure as $\bs M_0$, but without the explicit link to a vector $\z_0$. Instead, two categories of linear constraints are intended to force the matrix $\bs M$ to inherit the structure of $\bs M_0$:
\begin{itemize}
\item \emph{Structural constraints/ total symmetry}. Due to the additional monomials that appear in~\eqref{secondRound}, there now exist corresponding additional relations between the entries of $\bs M_0$ (and thus $\bs M$ as well). In particular, all monomials in $\z_0^{(2)}$ find an exact match in the block $\z_0^{\otimes^2}$. Within the block $\z_0^{(2)}\otimes \z_0^{(2)}$, one must also list all the total symmetry constraints of a tensor of order 4. More generally, the structural constraints enforce equality of the entries that are identical in the rank-one matrix $\bs M_0$. 

\item \emph{Higher-order affine constraints}. Similarly, for any of the original affine constraints applying on the elements of $\z_0^{\otimes^2}$, one can now define higher order constraints that are jointly enforced on the elements of $\z_0$ and the elements of the block $\z_0^{(2)}\otimes \z_0$. As an example, consider that one is given the constraint $X_{ij} = x_i y_j = (X_0)_{ij}$. It is now possible to enforce the constraints $x_i y_i x_k - (X_0)_{ij} x_k$ for any monomial $x_k$. More generally, the higher-order constraints are obtained by multiplying the original constraints by any product of the entries of $x$ and $y$ of degree at most two. 
\end{itemize}

The second-order formulation in this paper consists in combining all these constraints with $\bs M \succeq 0$, and with minimization of $\mbox{Tr}(\bs M)$. The point of this paper is to prove that this formulation, of order $4$ in $\mathbf{x}$ and $\mathbf{y}$, is enough to recover every rank-one matrix in the completion problem, and to provide a scalable algorithm for it.

This idea is not new and can be found through various formulations in the work of  Parrilo~\cite{parrilo,parrilo2003semidefinite}, Shor~\cite{shor1988approach, shor1987class, shor1987quadratic}, Nesterov~\cite{nesterov2000squared}, and Lasserre~\cite{lasserre}. For now, we simply write this semidefinite programming relaxation in the following general form. More details on the constraints will be provided in section~\ref{sec:dualcert}.
\begin{align}\begin{split}
\mbox{minimize} &\quad  \mbox{Tr}(\bs M)\\
\mbox{subject to} &\quad \mathcal{A}(\bs M) =  \bs b,\\
&\quad  \bs M\succeq 0. 
\end{split}\label{stableSDPnoiseless}%
\end{align}
At this point, we just note that the linear map $\mathcal{A}$ now encodes the original constraints from~\eqref{matrixcompletion} together with the additional structural and higher order constraints mentioned above. 

To define the corresponding stable formulation for the semidefinite relaxation~\eqref{stableSDPnoiseless}, we first introduce a decomposition of the linear map into the structural part $\mathcal{A}_S$ and the remaining part $\tilde{\mathcal{A}}$. The motivation behind such a decomposition comes from the fact that structural constraints, unlike the original measurement constraints and their higher order extensions, are not affected by noise. The higher-order constraint in the Lasserrre hierarchy use (noisy) data in an essential manner in their expression, not just in a right-hand-side, hence we use the notation $\tilde{\mathcal{A}}$ as a shorthand for those constraints. Following this decomposition, the stable version of the relaxation~\eqref{stableSDPnoiseless} can be posed generally as,
\begin{align}\begin{split}
\mbox{minimize} &\quad  \mbox{Tr}(\bs M)\\
\mbox{subject to} &\quad \mathcal{A}_S(\bs M) =  0\\
& \quad \|\tilde{\mathcal{A}}(\bs M) - \tilde{\bs b}\|\leq \sigma\\
&\quad  \bs M\succeq 0. \end{split}\label{stableSDPnoisy}%
\end{align}
The vector $\tilde{\bs b}$ encodes the noisy measurements $(X_0)_{ij}+ \varepsilon_{ij}$ and the RHS corresponding to their higher order extensions, $(i,j)\in \Omega$. The next section introduces the main result of this paper.

\subsection{\label{sec:mainResult}Main Result}

The main result of this paper only requires the necessary and sufficient conditions of lemma~\ref{necessaryandsufficient} to be satisfied. Our first theorem states that the noiseless semidefinite program~\eqref{stableSDPnoiseless}, for a linear map $\mathcal{A}$ encoding both the original constraints as well as the additional higher order and structural constraints, solves the rank one completion problem exactly under minimization of the Trace norm.

\begin{theorem}\label{maintheorem}  
Consider problem~\eqref{matrixcompletion} in the context of lemma~\ref{necessaryandsufficient}, with $\bs X\in \mathcal{M}^*(1;m\times n)$. Then this problem can be solved exactly through two rounds of semidefinite programming relaxation with minimization of the trace norm.
\end{theorem}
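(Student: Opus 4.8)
The plan is to establish recovery by constructing an explicit \emph{dual certificate}, since the primal SDP \eqref{stableSDPnoiseless} minimizes $\mbox{Tr}(\bs M)$ over a spectrahedron and we want to certify that the rank-one lifted moment matrix $\bs M_0$ of \eqref{secondRound} is its unique optimizer. First I would set up the Lagrangian dual of \eqref{stableSDPnoiseless}: a feasible dual point is a vector of multipliers $\bs \lambda$ (one per linear constraint in $\mathcal{A}$) such that the dual slack matrix $\bs S = \bs I - \mathcal{A}^*(\bs \lambda)$ is positive semidefinite, where $\bs I$ comes from the trace objective. Complementary slackness then forces $\bs S \bs M_0 = 0$, i.e. the range of $\bs M_0$ must lie in the kernel of $\bs S$. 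Since $\bs M_0 = \bs v \bs v^\top$ with $\bs v = (1, \z_0, \z_0^{(2)})$, the requirement is exactly $\bs S \bs v = 0$, and uniqueness of the optimizer will follow if $\bs S$ has rank one short of full, i.e. $\ker \bs S = \mbox{span}(\bs v)$ (strict complementarity). The dual feasibility inequality $\bs S \succeq 0$ is equivalent to saying that the quadratic form $\bs w \mapsto \bs w^\top \bs S \bs w$ is a nonnegative, in fact a sum-of-squares, polynomial of degree four in the variables $\z = (\bs x, \bs y)$; this is the SOS/Lasserre reading of the certificate and explains the paper's framing.

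The heart of the argument is therefore to build this SOS polynomial explicitly from the available ingredients. Here I would use the combinatorial input of Lemma~\ref{necessaryandsufficient}: connectivity of the bipartite graph $\mathcal{G}$ guarantees, for every unknown entry, a propagation chain of the form \eqref{generalchain} linking it back to a measured monomial through products of the ideal generators $\{x_i y_j - (X_0)_{ij}\}_{(i,j)\in\Omega}$. Each such chain can be encoded as a polynomial identity inside the ideal generated by the completion constraints, and multiplying the original degree-two constraints by monomials of degree at most two (the higher-order affine constraints) produces exactly the ideal members needed to telescope along the chain. The trace objective contributes the remaining pieces: minimizing $\mbox{Tr}(\bs M)$ injects squares of individual monomials into $\bs S$, and these squares are precisely what is needed to complete the ideal combination into a genuine sum of squares that vanishes to second order at $\z_0$. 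Concretely, I would write $\bs w^\top \bs S \bs w = \sum_k \sigma_k(\z)^2 + \sum_{(i,j)\in\Omega} q_{ij}(\z)\,(x_i y_j - (X_0)_{ij})$ with the $\sigma_k$ supplied by the trace monomials and the multipliers $q_{ij}$ read off from the propagation chains, and then verify that this form is both nonnegative and vanishes exactly on the one-dimensional variety $\{\z = \z_0\}$.

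For uniqueness I would check strict complementarity: the quadratic form must be \emph{strictly} positive off the solution, which amounts to showing the constructed SOS decomposition has no spurious zeros. This is where connectivity enters a second time — because every variable is reachable by a chain, no nontrivial direction in monomial space can annihilate all the squares simultaneously, pinning $\ker \bs S$ down to $\mbox{span}(\bs v)$. Since $\bs M \succeq 0$ and $\bs M_0$ is rank one, this forces $\bs M = \bs M_0$, and reading off the bidegree-$(1,1)$ block recovers $\bs X_0 = \bs x_0 \bs y_0^\top$ uniquely.

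I expect the main obstacle to be the explicit construction of the multipliers $q_{ij}$ for an arbitrary connected graph, together with the verification that the resulting Gram matrix of $\bs S$ is genuinely PSD with a one-dimensional kernel. The propagation chains are easy to write down for a single path, but a general graph has many overlapping chains and cycles, so the combination must be organized recursively (the paper's phrase ``recursively building a certificate''); controlling the cross terms so that the positive squares from the trace dominate the indefinite ideal terms — rather than merely matching them formally — is the delicate quantitative step, and is presumably also what underlies the Lipschitz-stability claim in the noisy case.
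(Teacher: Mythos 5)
Your proposal follows essentially the same route as the paper: a dual certificate read as a degree-four sum-of-squares polynomial, assembled from the squared monomials contributed by the trace objective together with telescoping ideal combinations along the propagation chains that connectivity of $\mathcal{G}$ guarantees, with uniqueness pinned down by making the dual slack vanish only on $\mathrm{span}(\bs v)$. The only minor divergence is at the uniqueness step, where you invoke strict complementarity directly while the paper splits the argument into $Y_{T^\perp}\succ 0$ plus injectivity of the constraints on $T$; and note that in the noiseless case the construction is exact algebra (the squares equal trace plus constant plus an ideal element identically), so no quantitative domination of cross terms is actually required there.
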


The interest of semidefinite programs lies in their robustness vis-a-vis corruption of the data. This is what Theorem~\ref{noisycase} below makes precise. It shows that when considering observations that are corrupted by a noise $\varepsilon$, so that $(\tilde{X}_0)_{ij} = (X_0)_{ij}+ \varepsilon_{ij}$, the solution to the semidefinite programming relaxation~\eqref{stableSDPnoisy} remains within the noise level.

\begin{restatable}{theorem}{theoremCompletion}
\label{noisycase} 
Let $\bm M_0$ denote the rank one matrix introduced in~\eqref{secondRound} for $\z_0\in \mathbb{R}^{m+n-1}$. Assume that the necessary and sufficient conditions of lemma~\ref{necessaryandsufficient} are satisfied. Let $\|\bs \varepsilon\|_2 = \sqrt{\sum_{(i,j)\in \Omega}\varepsilon^2_{ij}}$.
Let $\bm M$ denote the solution to the semidefinite program~\eqref{stableSDPnoisy}. This solution satisfies  
\begin{align}
\frac{\|\bm M - \bm M_0\|_F}{\|\bm M_0\|_F} \leq C_0(m+n)^{7/2}\|\bs \varepsilon\|_2.\label{noisyCompletionEquation}
\end{align}
The constant $C_0$ depends on the entries of $\bs X_0$, but not on $\Omega$, $m,n,$ or $\varepsilon$. 


\end{restatable}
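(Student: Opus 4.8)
The plan is to treat Theorem~\ref{noisycase} as a perturbation statement around the noiseless optimum $\bs M_0$, leveraging the dual certificate that underlies the exact-recovery Theorem~\ref{maintheorem}. Write $\bs v_0 = (1,\z_0,\z_0^{(2)})$, so that $\bs M_0 = \bs v_0\bs v_0^T$ is rank one. The recursive SOS construction behind Theorem~\ref{maintheorem} furnishes a dual-feasible pair for the noiseless program: a slack matrix $\bs S = I - \mathcal{A}_S^*(\bs u) - \mathcal{A}^*(\bs w)\succeq 0$ together with multipliers $(\bs u,\bs w)$, satisfying complementary slackness $\bs S\bs v_0 = 0$. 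The first thing I would do is upgrade this to \emph{strict} complementarity, i.e. verify that $\ker \bs S = \operatorname{span}\{\bs v_0\}$ and extract a quantitative spectral gap $\lambda_2(\bs S)>0$ (the smallest nonzero eigenvalue). Stability then follows from two independent controls: the positive-semidefinite constraint together with $\bs S$ pins down the component of $\bs M$ orthogonal to $\bs v_0$, while the affine (measurement) constraints pin down the component along $\bs v_0$ and the cross terms.

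Concretely, I would first check that $\bs M_0$ is feasible for~\eqref{stableSDPnoisy} once $\sigma$ is taken on the order of $\|\bs \varepsilon\|_2$: the structural constraints hold exactly, $\mathcal{A}_S(\bs M_0)=0$, while the noisy right-hand side and the data-dependent coefficients of $\tilde{\mathcal{A}}$ perturb $\tilde{\mathcal{A}}(\bs M_0)-\tilde{\bs b}$ by a quantity bounded by $\|\bs\varepsilon\|_2$ times a factor depending only on the entries of $\bs X_0$. Optimality of $\bs M$ then gives $\tr(\bs M)\le \tr(\bs M_0)$. Expanding the certificate identity, and using $\mathcal{A}_S(\bs M)=\mathcal{A}_S(\bs M_0)=0$, yields
\[
\langle \bs S, \bs M\rangle = \big(\tr(\bs M) - \tr(\bs M_0)\big) - \big\langle \bs w,\ \tilde{\mathcal{A}}(\bs M) - \tilde{\mathcal{A}}(\bs M_0)\big\rangle .
\]
The first bracket is $\le 0$ by optimality; the inner product is bounded by $2\sigma\|\bs w\|$ by feasibility; and, since the certificate is built for the noiseless operator, accounting for the perturbation $\tilde{\mathcal{A}}-\mathcal{A}$ of the operator itself contributes a further term of the same order, so that $0\le\langle \bs S,\bs M\rangle\le C_1\,\|\bs\varepsilon\|_2\,\|\bs w\|$. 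Since $\bs S\succeq \lambda_2(\bs S)\,(I-P_0)$ with $P_0$ the orthogonal projector onto $\operatorname{span}\{\bs v_0\}$, this bounds the off-support mass, $\langle I-P_0,\bs M\rangle\le C_1\|\bs\varepsilon\|_2\|\bs w\|/\lambda_2(\bs S)$, and hence $\|(I-P_0)\bs M (I-P_0)\|_F\lesssim \|\bs\varepsilon\|_2$.

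The delicate point is that the positive-semidefinite constraint alone controls the cross terms $(I-P_0)\bs M P_0$ only at order $\sqrt{\|\bs\varepsilon\|_2}$, through the Schur-complement/Cauchy--Schwarz inequality $\|(I-P_0)\bs M P_0\|_F\le \sqrt{\tr(P_0\bs M P_0)}\,\sqrt{\langle I-P_0,\bs M\rangle}$, which is \emph{not} Lipschitz. To recover the linear rate I would instead control the cross terms, and the scalar deviation of $P_0\bs M P_0$ from $\bs M_0$, directly through the affine constraints. Here the propagation structure of Lemma~\ref{necessaryandsufficient} is essential: connectedness of $\mathcal{G}$ means every relevant entry of $\bs M$ is linked to the normalized entry $M_{11}=1$ through a chain of measurement constraints of the form~\eqref{generalchain}, so the pertinent block of $\tilde{\mathcal{A}}$ is injective and each cross/on-support entry of $\bs M-\bs M_0$ is determined up to an error that propagates linearly in $\|\bs\varepsilon\|_2$ along the chain. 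Assembling the three blocks gives $\|\bs M-\bs M_0\|_F\le C_0'\,\|\bs\varepsilon\|_2$, with $C_0'$ collecting the spectral gap of $\bs S$, the dual norm $\|\bs w\|$, and the injectivity constants of the chains.

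The main obstacle is entirely quantitative: obtaining the explicit exponent $(m+n)^{7/2}$. This requires tracking, through the recursive construction of the certificate and through the propagation chains, how $\lambda_2(\bs S)$ degrades, how $\|\bs w\|$ grows, and how the injectivity constant of $\tilde{\mathcal{A}}$ deteriorates as functions of $m+n$. Chains of length up to $m+n$ cause polynomial accumulation of these condition numbers; bounding each contribution by an explicit power of $m+n$, while absorbing the dependence on the magnitudes of the entries of $\bs X_0$ into $C_0$, is the technical heart of the argument, and the product of these powers is what produces the stated rate after normalizing by $\|\bs M_0\|_F$.
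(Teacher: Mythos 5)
Your plan is essentially the paper's own argument: decompose the error into the tangent space $T=\{\bs m_0\bs v^T+\bs v\bs m_0^T\}$ and its complement, use trace-optimality together with the SOS dual certificate (whose restriction to $T^\perp$ is dominated by the identity) to bound the $T^\perp$ mass linearly in $\|\bs\varepsilon\|_2$, and — exactly as you note — use quantitative injectivity of the measurement constraints on $T$, via the propagation chains of Lemma~\ref{necessaryandsufficient}, to control the cross/on-support terms at a Lipschitz rate rather than the square-root rate that PSD alone would give. What you defer as "the technical heart" (tracking $\|\bs w\|$, the chain lengths, and the size of the moment matrix to produce the $(m+n)^{7/2}$ factor) is precisely the content of Sections 3.1.1--3.1.2 of the paper, so the proposal is correct in strategy but stops where the bulk of the paper's proof begins.
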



Once $\bs M$ is found, one can read off $\bs X$ from the entries of $\bs M$ corresponding to $|\alpha| = 1$. Note that for the propagation and log-system algorithms, a similar error bound can only be expected to hold provided $\| \bs \varepsilon \|_2 \leq c \min_{i,j} | (X_0)_{ij} |$ for some $c < 1$, and would otherwise become unbounded.

Most of the $(m+n)^{7/2}$ multiplicative factor in Theorem~\ref{noisycase} arises because of the propagation of noise though the certificate (i.e. the fact that the certificate relies on a chain of length $m+n$). As we don't make prior assumption regarding propagation of information through the graph, the noise along the path is constrained by a global bound of the form $\|\varepsilon\|\sqrt{(1 + \|\bs m_0\|_1)}$, instead of a bound in $\mathcal{O}(\|\varepsilon\|)$ which could be enforced if the path was explicitely given. This global rather than path specific bound results in additional $\mathcal{O}(\|\varepsilon\| (m+n))$ multiplicative factors when a path specific bound, would lead to better $\mathcal{O}(\|\varepsilon\|)$. The remaining factor affecting the constant is the size of the moments matrix. In this regard, a second possible simplification is to restrict the set of second-order moments to moments that appear in the constraints only. In this case, the first column of the moments matrix~\eqref{secondRound} reduces to $(1,\;\z_0,\;(x_0)_i(y_0)_j)$ for $(i,j)\in \Omega$ and the semidefinite program~\eqref{stableSDPnoisy} becomes as scalable as the traditional SDP relaxation, or the first round of the Lasserre hierarchy since the matrix of unknowns is now on the order of $(m+n)^2$

Those ideas are summarized through Corollary~\ref{noisycaseCorollary} below.

%

\begin{restatable}{corollary}{corollaryCompletion}
\label{noisycaseCorollary}
Assume that the paths in the bipartite graph relating each of the unknown vertices $x_i$ (resp. $y_j$) to the root node $y_\ell$ are explicitly given. Then we have the following stability estimate,
\begin{align}
\frac{\|\bm M - \bm M_0\|_F}{\|\bm M_0\|_F} \leq C_0 (m+n)^{2}\|\bs \varepsilon\|_2.\label{noisyCompletionEquationWPaths}
\end{align}
The constant $C_0$ depends on the entries of $\bs X_0$, but not on $\Omega$, $m,n,$ or $\varepsilon$.

\end{restatable}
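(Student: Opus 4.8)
The plan is to revisit the proof of Theorem~\ref{noisycase} and isolate the two quantitative steps that inflate the stability constant from order $(m+n)^2$ to order $(m+n)^{7/2}$, then to show that the single hypothesis of the corollary --- explicit knowledge of a propagation path from each vertex $x_i$ (resp. $y_j$) to the root $y_\ell$ --- collapses both. Recall that the dual certificate is assembled by propagating the known first-row information through chains of constraints of the form~\eqref{generalchain}, so the stability constant accumulates the error incurred along these chains. Because the proof of Theorem~\ref{noisycase} assumes no knowledge of which chain is used, the noise reaching a given vertex must be controlled by a global quantity of the form $\|\bs\varepsilon\|_2\sqrt{1+\|\bs m_0\|_1}$ that simultaneously bounds the contribution of every admissible chain, and this worst-case accounting is what generates the extra $\mathcal{O}(m+n)$ multiplicative factors.

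First I would replace this global noise bound by a path-specific one. Once the chain reaching each vertex is fixed, the error propagated to that vertex is governed only by the noise on the edges of that single chain, giving a contribution of order $\mathcal{O}(\|\bs\varepsilon\|_2)$ in place of the global $\|\bs\varepsilon\|_2\sqrt{1+\|\bs m_0\|_1}$. Substituting this sharper per-vertex estimate into the certificate construction removes the factors of $(m+n)$ that arose from aggregating over all candidate paths.

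Second, the same explicit knowledge of the paths licenses a reduction of the ambient moment matrix: since only the monomials traversed by the prescribed chains enter the certificate, I may restrict the second-order block to the moments appearing in the constraints, so that the first column of the moment matrix reduces to $(1,\;\z_0,\;(x_0)_i(y_0)_j)$ with $(i,j)\in\Omega$. Because $|\Omega|=\mathcal{O}(m+n)$, this shrinks the matrix of unknowns from order $(m+n)^4$ to order $(m+n)^2$, which in turn reduces the size-dependent factor contributed by the Frobenius norm of the perturbation in the relative error~\eqref{noisyCompletionEquationWPaths}. Carrying both reductions through the same certificate estimates as in the proof of Theorem~\ref{noisycase} then yields the stated $(m+n)^2$ bound.

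The hard part will be verifying that the SOS dual certificate survives the restriction of the moment set. A priori the certificate built in Theorem~\ref{noisycase} may use products of entries of $\bs x$ and $\bs y$ indexed outside $\Omega$, so I must check that, once the path for each vertex is fixed, the monomials needed to build the certificate are exactly those retained in the reduced moment matrix, and hence that both $\bs M\succeq 0$ and the structural constraints $\mathcal{A}_S(\bs M)=0$ remain feasible for the restricted program. Establishing that no essential square is lost when the off-path second-order moments are dropped --- equivalently, that the restricted relaxation remains exact --- is the crux; once this is in place, re-estimating the constant reduces to recomputing the bounds already appearing in the proof of Theorem~\ref{noisycase} with the path-specific noise estimate in place of the global one.
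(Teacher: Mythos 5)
Your proposal follows essentially the same route as the paper: the proof of Corollary~\ref{noisycaseCorollary} is obtained precisely by (i) replacing the global bound $\eta$ with the path-specific bound $\eta'=\mathcal{O}(\|\bs\varepsilon\|_2)$ from~\eqref{constraintsPathsL2}, and (ii) restricting the second-order moments to those appearing in the constraints, so that $\|\bs m_0\|_2=\mathcal{O}(\sqrt{m+n})$ and the matrix of unknowns has size $\mathcal{O}(m+n)\times\mathcal{O}(m+n)$. The crux you flag is resolved exactly as you would hope: in the reduced formulation the only second-order squares needed are $(\z^{\bs\gamma}-\z_0^{\bs\gamma})^2$ for $\bs\gamma$ indexed by $\Omega$, and each of these is obtained directly from the corresponding constraint, the trace, and the constant $\rho$ without any propagation, so no off-path monomial is required and conditions 1)--3) (including the rank condition) survive the restriction.
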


Before introducing the proofs of Theorems~\ref{maintheorem} and~\ref{noisycase} as well as Corollary~\ref{noisycaseCorollary}, and describing how to make numerical schemes scalable, the next section lists the most relevant connections of this work with the developments of the last few years. 

\subsection{\label{sec:connections}Connections with existing work}

Low rank matrix completion and semidefinite programming relaxations have both attracted a lot of attention from various communities over the past few years. Low-rank matrix completion is a problem that has been extensively studied in the litterature and has led to numerous successful approaches. One of the most famous, nuclear norm minimization, led to the derivation of important recovery guarantees~\cite{candes2015phase, candes2010matrix,candes2009exact, candes2010power,  recht2011simpler}. In~\cite{candes2010matrix} in particular, the authors derive (probabilistic) bounds on the recovery error for low rank matrices, when the measurements are corrupted by noise. Other notable progress on this question includes the results of Keshavan et al.~\cite{keshavan2010matrix} who certify recovery with high probability (w.h.p.) given $\mathcal{O}((m+n)\log(m+n))$ measurements and some incoherence conditions in a noiseless framework. In~\cite{keshavan2009matrix,keshavan2008learning}, the same authors derive a probabilistic bound that can be used in the presence of noise and improve the results obtained in~\cite{keshavan2009matrix} to a recovery w.h.p. that scales linearly in the noise as $\mathcal{O}(m+n)$ provided that both the magnitude of the entries as well as the number of measurements are sufficiently large. The noise is assumed to be i.i.d random with zero mean and sub-gaussian tail. 

Singer et al.~\cite{singer2010uniqueness} investigate matrix completion with a non random sampling mask based on the structure of the measurements. Their paper is interested in determining whether completion is possible or not in the general rank-$r$ case using rigidity theory. Other papers that focus on characterizing the sampling patterns enabling matrix completion include~\cite{kiraly2013error,kiraly2015algebraic} where the authors extend the idea which was studied for rank one matrices in~\cite{kiraly2012combinatorial} to the more general case of low rank matrices. Those papers show that feasibility and uniqueness of the completion only depends on the structure of the measurements. They propose an algorithm based on completion of the $k$-by-$k$ minors (circuit polynomials) to complete the matrix in the noiseless case. This algorithm lacks robustness to noise and requires an additional step averaging the values over different paths across the bipartite graph to cancel out the noise. The idea therefore cannot be applied in the case of $\mathcal{O}(m+n)$ measurements. Neither can it be used with deterministic noise. The question of completability patterns is also discussed in the more recent~\cite{pimentel2015characterization} by Pimentel et al. In~\cite{pimenteladaptive}, the same authors study reconstruction under an i.i.d. zero mean noise with covariance matrix $\sigma^2 \bs I$. This last paper is based on the  concept of recommender systems where only a given number of measurements ($r+1$) are allowed per column. They show asymptotic convergence of their estimator for sufficiently large matrices, when a sufficiently large number of columns are used to generate the measurements.

Since the pioneering work of Goemans and Williamson~\cite{goemans1995improved}, which started popularizing the use of semidefinite programs as an approximation to hard quadratic optimization problems, semidefinite programming has gained a reputation as a potentially powerful candidate to derive interesting approximations to hard/nonconvex problems. This activity culminated in the now famous Unique Games Conjecture~\cite{khot2002power} in complexity theory. Examples of successful developments based on semidefinite programming or nuclear norm relaxations of nonlinear problems can be found in~\cite{candes2009exact,voroninski1,demanetjugnon, candes2014towards, bandeira2015convex}. In~\cite{demanetjugnon,demanetjugnon2}, one of the authors solves the symmetric rank one matrix completion problem when the diagonal entries are given. The proof system in this paper relies on spectral graph theory, and use the fact that the eigenvector of the exact solution $\bs X_0$ is also an eigenvector of the data weighted graph Laplacian, to derive a bound on the recovery. Finally, the noiseless result of this paper was presented in the introductory note~\cite{cosserank}.

As stated earlier, the semidefinite program~\eqref{secondRound} of this paper is in fact an instance of the more general Lasserre and sum-of-squares (SOS) hierarchies of semidefinite programs~\cite{laurent} which were introduced through the work of Parrilo~\cite{parrilo,parrilo2003semidefinite}, Shor~\cite{shor1988approach, shor1987class, shor1987quadratic}, Nesterov~\cite{nesterov2000squared}, and Lasserre~\cite{lasserre} as an extension of the basic semidefinite programming relaxation. Those hierarchies are based on making semidefinite programming relaxations gradually tighter by adding more variables and constraints, resulting in optimization on gradually larger subspaces.

Semidefinite programming hierarchies have received a lot of attention over the last few years, both positively as a potential extension to the traditional semidefinite programming relaxations, and negatively because of their practical intractability resulting form the gradually higher dimension of their successive rounds. Another important drawback associated to those hierarchies has been the lack of convincing instances for which rounds higher than one were leading to noticeable improvements. For the most recent developments on the convergence of those hierarchies, see for example the papers by Barak et al.~\cite{barak2012hypercontractivity,barak2014sum,barak2015tensor}.

A few improvements have however been made over the last few years. On the first point, in a paper which is very related to this one~\cite{tang2015guaranteed}, Tang et al. show that the tensor decomposition problem can be solved through a semidefinite programming relaxation with minimal number of rounds. In~\cite{barak2015tensor}, Barak et al. certify using the Rademacher complexity, that tensor completion can be solved with high probability with $\widetilde{\mathcal{O}}(n^{3/2})$ measurements through $6$ rounds of semidefinite programming relaxation. Finally, other results along that line recently appeared in papers by Nie et al. In a first paper tackling assymptotic convergence to the minimum~\cite{nie2013exact} for general polynomial problems, these authors introduce an updated formulation based on the Jacobian of the polynomial constraints for which convergence of the hierarchy at a sufficiently large order is certified. In a second monograph~\cite{cui2014all} which is discussed further in the last section of this paper, the same author shows that computation of the real eigenvalues of symmetric tensors can be achieved through a finite number of semidefinite programming relaxation rounds. No upper bound is provided on the number of rounds required.

Convergence of the sum-of-squares and Lasserre hierarchies are also discussed by Gouveia et al. in~\cite{gouveia2010theta}. This paper relates the sequence of theta bodies of an ideal $\mathcal{I}$ and the Lasserre hierarchy and shows that under some assumptions, the $k^{th}$ theta body of an ideal is equal to the set of solutions resulting from the $k^{th}$ round of the Lasserre hierarchy and that for real radical ideal the $k^{th}$ theta body corresponds to the closure of the convex hull of the variety of the ideal $\mathcal{I}$ as soon as every polynomial of degree one that is non negative on the variety can be represented by sum of squares of degree at most $k$ modulo the ideal.

A few papers adress simplifications of higher rounds of semidefinite programming relaxations by means of sparsity of the polynomial constraints. Among those papers, Lasserre~\cite{lasserre2006convergent} as well as Nie et al.~\cite{nie2008sparse} introduce tailored relaxations for problems where sparsity occurs in the constraints and the objective function. This adapted relaxation enables a significant reduction in the size of the matrices whenever a property known as \textit{the running intersection property} is satisfied together with some independence between the sets of variables used by the constraints. Ahmadi~\cite{ahmadi2017improving} also discusses possible reduction in the complexity of semidefinite programming hierarchies by means of the chordal extension of the graph whose cliques are defined from the polynomial constraints.

The Lasserre and sum-of-squares hierarchies are built upon the resolution of systems of polynomial equations. For this reason we also briefly address another important line of work following from computational algebraic geometry. When looking for the solution to a system of polynomial equations (in particular when the underlying ideal is zero-dimensional), the very first question one want to ask is whether it is possible to compute a Gr\"obner basis for this system (see for example~\cite{cox1992ideals} as well as~\cite{sturmfels2002solving}). When such a Gr\"obner basis can be found, the solutions can be computed as the vectors of joint eigenvalues of the companion matrices (see Theorem 2.6 in~\cite{sturmfels2002solving}).

Computing a Gr\"obner basis is at least NP-complete in the general case (see for example~\cite{bardet2005complexity}). In fact, the notion of Gr\"obner basis is somehow complementary to proving the tightness of the SOS/Lasserre hierarchies. Finding one helps understand the other and vice versa. The degree of the Gr\"obner basis is unknown before the computation and bases with higher degree are more difficult to compute. Moreover the numerical computations involved are known to be numerically unstable (see for example~\cite{byrod2009fast, heldt2009approximate} and the discussion therein) and do not scale well with the dimension. The dual polynomial that we build in this paper is in fact equivalent to showing that such a Gr\"obner basis can be constructed (modulo the Trace) with degree at most $4$ from the polynomial ideal generated by the completion constraints.

\subsection{Notations}

Let $\z = (\bs x,\bs y)\in \mathbb{R}^{m+n-1}$. When dealing with algebraic problems like~\eqref{matrixcompletion}, it will be useful to write those problems as general polynomial optimization problems (POP) of the form 
\begin{subequations}
\label{polynomialSyst}
\begin{align}
\z^* = \underset{\z}{\operatorname{argmin}} \quad & p_0(\z),\quad \z\in \mathbb{R}^{n}\\
\mbox{subject to}\quad & p_1(\z)\geq 0,\;p_2(\z)\geq 0,\ldots,p_m(\z)\geq 0.
\end{align}
\end{subequations}
For some polynomials $p_0(\z),p_1(\z),\ldots p_m(\z)\in \mathbb{R}[\z]$ where $\mathbb{R}[\z]$ is used to denote the ring of multivariate polynomials in the optimization variable $\z\in \mathbb{R}^{m+n-1}$. We will sometimes use the compact notation $\{p_j(\z)\}_{j\in [J]}$ to denote the set of polynomial constraints. This set of constraints defines a semialgebraic set $K$ of feasible points, which we write as 
\begin{align}
K = \{\z \in \mathbb{R}^{m+n-1},\;|\;p_1(\z)\geq 0,\ldots,p_m(\z)\geq 0\}\label{semialgebraicSet}
\end{align}
For $\z\in \mathbb{R}^{m+n-1}$ and $\bs\alpha\in\mathbb{N}^{m+n-1}$, we introduce the multi-index notation $\bm z^{\bs \alpha}=z_1^{\alpha_1}z_2^{\alpha_2}\ldots z _n^{\alpha_n}$ with $|\bs \alpha| = \alpha_1 + \alpha_2 + \ldots + \alpha_n$, the degree of the monomial $\z^{\bs \alpha}$. We will use $\z_{\mathcal{B}}$ to denote the sequence of all monomials in $\z\in \mathbb{R}^{m+n-1}$ for some standard ordering (standard monomial basis). Hence $\z_{\mathcal{B}}:=(1,z_1,z_2\;,\ldots, z_1^2, z_1z_2,\ldots,\z^{\bs \alpha},\ldots)$. Similarly, let $\z_{\mathcal{B}}^t$ denote the vector of all monomials $\z^{\bs \alpha}$ from the standard basis with degree bounded by $t$: $|\bs \alpha|\leq d$. 


In this paper, polynomials will be alternatively be denoted through either of the representations below,  
\begin{itemize}
\item Weigthed sums of monomials $p_i(\z) = \sum_{\bs \alpha} (p_i)_{\bs \alpha}\z^{\bs \alpha}$, where the $(p_i)_{\bs \alpha}$ thus denotes the coefficient of the monomial $\z^{\bs \alpha}$ in $p_i(\z)$. 
\item Vectors/sequences of coefficients $p_i = ((p_i)_{\bs \alpha})_{\bs \alpha\in \mathbb{N}^n}$ as $p_i(\z) =  p_i^T\z_{\mathcal{B}}$ 
\item Matrices of coefficients, $P_i$, such that $p_i(\z) = \langle P_i, \z_{\mathcal{B}}\z_{\mathcal{B}}^T\rangle = \z_{\mathcal{B}}^T P_i\z_{\mathcal{B}}$. In this case we will use the notation $\Z_{\mathcal{B}}$ (resp. $\Z_{\mathcal{B}}^d$) to represent the matrix generated from the standard basis as $\Z_{\mathcal{B}}=\z_{\mathcal{B}}\z_{\mathcal{B}}^T$ (resp. $\Z_{\mathcal{B}}^{2d} = \z^d_{\mathcal{B}}(\z^d_{\mathcal{B}})^T$). The moments matrix $\bs M_0$ encountered earlier is simply $\int \Z_{\mathcal{B}}^{2d} d\mu$ when $d = 2$ and for the measure $d\mu = \delta(\z-\z_0) d \z$.

\end{itemize}

For the general set of polynomials $p_1(\z),p_2(\z),\ldots,p_m(\z)$, we let $\mathcal{I}$ denote the ideal generated by those polynomials. This set is defined as from all the combinations that are generated by multiplying the polynomials $p_i(\z)$ by any other polynomials $h_i(\z)\in\mathbb{R}[\z]$.
\begin{align}
\mathcal{I}(p_1,\ldots, p_m):=\left\{\sum_{j\in [m]}p_j(\z)h_j(\z),\quad \mbox{for polynomials $h_j(\z)\in \mathbb{R}[\z]$}\right\}\label{IdealDefinition}
\end{align}
Equivalently, we will use $\mathcal{I}_d$ to denote the \textit{truncated} ideal, whose maximal degree is bounded by $d$, 
\begin{align}
\mathcal{I}_d(p_1,\ldots, p_m):=\left\{\sum_{j\in [m]}p_j(\z)h_j(\z),\quad \mbox{$h_j(\z)\in \mathbb{R}[\z]$, $\mbox{deg}(h_j)\leq d-\mbox{deg}(p_j)$}\right\}\label{IdealTruncatedDefinition}
\end{align}
Given the matrix $\Z^d_{\mathcal{B}} = \z_{\mathcal{B}}^d (\z^d_{\mathcal{B}})^T$ used to represent monomials of degree at most $2d$, when writing polynomials in matrix form, we will need to access monomials of a given degree. As an example, consider the univariate monomial basis $(1,z,z^2,z^3,\ldots)$. The corresponding matrix for the monomial basis truncated at degree $4$ reads
\begin{align}
\Z_{\mathcal{B}}^2 = \left(\begin{array}{ccc}
1 & z & z^2 \\
z & z^2 & z^3 \\
z^2 & z^3 & z^4\\
\end{array}\right)\label{momentsMatrixExample}
\end{align}
Now consider the polynomial $p(z) :=z^2-1$. This polynomial can be applied on the matrix $\Z_{\mathcal{B}}^2$ by introducing appropriate matrices to access the monomials. Those matrices are simply assembled from the product of two canonical basis vectors. That is, for any degree $\gamma$, one access the monomial $z^\gamma$ in $\Z$ by means of the matrices $\bs e_{\alpha}\bs e_\beta^T$ for any $\alpha,\beta$ such that $\gamma = \alpha+ \beta$. For any such matrix, we have 
$$\langle \Z_\mathcal{B}^2, \bs e_{\alpha} \bs e_{\beta}^T\rangle = z^{\gamma}.$$ 
In particular, using those matrices, the polynomial $p(z)$, $z\in \mathbb{R}$ reads 
\begin{align}
p(z) &= \langle \bs P, \Z_\mathcal{B}^2\rangle  = \sum_{\gamma \in \mathbb{N}^{\mbox{\tiny deg}(p)}}\sum_{\alpha+\beta = \gamma} C_{\gamma} p_{\gamma} \langle \Z_\mathcal{B}^2, e_{ \alpha} e_{\beta}^T\rangle.\label{eq}\end{align}
$C_\gamma$ is a normalizing constant defined from each degree $\gamma$ as 
$$C_\gamma = \frac{1}{\#\{(\alpha,\beta)|\;\alpha+\beta=\gamma\}}$$
To write expression~\eqref{eq} compactly, we introduce auxiliary matrices $B_\gamma$ relative to each of the monomials $z^\gamma$, defined as
\begin{align}
\bm B_{\gamma} = \sum_{\alpha +\beta= \gamma}e_\alpha e_\beta^T.
\end{align}
Using those matrices, the polynomial $p(z)$ can now read directly as $p(z) = \sum_{\gamma \in \mathbb{N}^{\mbox{\tiny deg}(p)}} C_\gamma p_\gamma \langle \Z_\mathcal{B}^2, B_\gamma \rangle$. Moreover, the coefficients of this polynomial can be obtained via $p_\gamma = \langle \bs P, \bs B_\gamma \rangle$. The constant $C_\gamma$ can now also be expressed more simply as $C_\gamma = \frac{1}{\|\bm B_\gamma\|_F^2}$.

As an example, the matrix $\bs B_2$ used to access the monomial $z^2$ in~\eqref{momentsMatrixExample} reads 
\begin{align}
\bm B_2 = \left(\begin{array}{ccc}
0 & 0 & 1 \\
0 & 1 & 0 \\
1 & 0 & 0\\
\end{array}\right)
\end{align}
When dealing with polynomials on $\mathbb{R}^{m+n-1}$, the same idea applies and we will denote the corresponding matrices as $\bm B_{\bs \gamma}$ where $\bs \gamma$ is the multi-index used to access the monomial $\z^{\bs \gamma}$.

\section{\label{sec:strategyMC}Proof of Theorem~\ref{maintheorem}.}


To ensure unique recovery of the matrix $\bm M_0$ from the semidefinite program~\eqref{stableSDPnoiseless}, traditional convex optimization proofs are based on satisfying first order optimality conditions\footnote{Note that in the case of convex optimization those conditions are necessary and sufficient.} by exhibiting a dual vector $\lambda$ such that $-\mathcal{A}^*\lambda -  I \in \partial \imath_\mathcal{K}(\bm M_0)$ where $\imath_\mathcal{K}$ denotes the indicator function of the positive semidefinite (PSD) cone (see for example~\cite{candes2009exact}). In section~\ref{sec:dualcert} below, we start by giving a more detailed characterization of problem~\eqref{stableSDPnoiseless} in terms of the constraints. We then provide the general conditions for the existence of such a certificate. In section~\ref{sec:polynomialform}, we show how satisfiability of these conditions can be reduced to the construction of a dual polynomial with particular structure. Section~\ref{sec:proof} finally shows how such a dual polynomial can be constructed.

\subsection{\label{sec:dualcert}Dual certificate}

In this section, we give an explicit expression for the general condition $-\mathcal{A}^*\lambda - \bm I \in \partial \imath_\mathcal{K}(\bm M_0)$ on the dual vector $\lambda$ certifying optimality in the case of problem~\eqref{stableSDPnoiseless}. We then show how this condition can be made tighter to ensure uniqueness in addition to optimality at $\bm M_0$. We start by giving the detailed expression of the constraints in~\eqref{stableSDPnoiseless}. Note that each matrix $\bm B_{\bs \gamma}$ can be decomposed into a sum of elementary matrices $\bm E_{\bs \gamma_1,\bs \gamma_2}$ with only a single non zero entry, for multi-indices $\bs \gamma_1,\;\bs \gamma_2\in \mathbb{N}^{m+n-1}$, i.e., $\bm B_{\bs \gamma} = \sum_{\bs \gamma_1+\bs \gamma_2 = \bs \gamma} \bm E_{\bs \gamma_1,\bs \gamma_2}$ with $\bm E_{\bs \gamma_1,\gamma_2} = \bs e_{\bs \gamma_1} \bs e_{\bs \gamma_2}^T$.
\begin{align}\begin{split}
\text{minimize} \quad & \text{Tr}(\bm M)\\
\text{subject to} \quad & \sum_{\bs \zeta} \frac{(h_\ell)_{\bs \zeta}}{\|\bm B_{\bs \zeta+\bs \kappa}\|^2_F}\langle \bm M, \bm B_{\bs \zeta+\bs \kappa}\rangle =0.\\
& \text{for}\quad \bs \kappa\in \mathbb{N}^K_{2(t- d_{h_\ell} )}, 1\leq \ell \leq L\\
& \bm M\succeq 0,\quad \bm M_{11} = 1,\\
& \langle \bm M, \bm E_{\bs \delta_1,\bs \delta_2} - \bm E_{\bs \gamma_1,\bs \gamma_2}\rangle = 0,\\ 
& \mbox{for all $(\bs \delta_1,\bs \delta_2), (\bs \gamma_1,\bs \gamma_2)\;\mbox{s.t.}\;\bs \delta_1+ \bs \delta_2= \bs \gamma_1+ \bs \gamma_2\leq 2t$}.\end{split}\label{momentProblem}
\end{align}
In the proof of Theorem~\ref{maintheorem}, we will write the last constraint of~\eqref{momentProblem} together with the normalization constraint $\bm M_{11} = 1$ compactly as $\bm M = \sum_\gamma m_\gamma \bm B_\gamma + \bs e_1\bs e_1^T$ by introducing additional variables $m_{\gamma}$. This enables us to get rid of the last structural constraint in~\eqref{momentProblem}. The resulting structure of $\bm M$ is Hankel-type (and would be exactly Hankel in the one-dimensional case as we saw earlier). The first sum in~\eqref{momentProblem} is taken over all the coefficients of each constraint $h_\ell(x) = 0$, $h_\ell(x) = \sum_{\bs \zeta} (h_\ell)_{\bs \zeta}{\bf x}^{\bs \zeta}$. 

We now derive the first order optimality conditions $-\mathcal{A}^*\lambda - \bm I \in \partial \imath_\mathcal{K}(\bm M_0)$ for problem~\eqref{momentProblem} in terms of the Lagrangian dual function $\mathcal{L}$. Introducing multipliers for each of the polynomial constraints, the Lagrangian can be written as
\begin{align}&\mathcal{L}(\bm M, \bm m, \bs \lambda, \bs \xi) = \text{Tr}(\bm M) + \langle \bm M- \sum_{\bs \gamma} m_{\bs \gamma} \bm B_{\bs \gamma} - \bs e_1\bs e_1^T,\bs \xi\rangle\nonumber \\
 & + \sum_\ell\sum_{\bs \kappa\in \mathbb{N}^K_{2(t- d_{h_\ell} )}}\lambda_{\ell, \bs \kappa} \left(\sum_{\bs \zeta} \frac{(h_\ell)_{\bs \zeta}}{\|\bm B_{\bs \zeta+\bs \kappa}\|_F^2}\langle \bm  B_{\bs \zeta+\bs \kappa}, \bm M\rangle\right)\label{Lagrangian0}\\ &+\imath_{\mathcal{K}}(\bm M).\nonumber
\end{align}
The multipliers $\lambda_{\ell,\bs \kappa}$ correspond to each of the original and shifted polynomial constraints while $\bs \xi$ encode the Hankel-type structure of the matrix $\bm M$. Usual convex optimization theory states that $\bm M_0 = \bs m_0\bs m_0^T$ is a minimizer for problem~\eqref{momentProblem} if and only if one can find dual vectors $(\bs \xi, \bs \lambda)$ such that $0\in \partial \mathcal{L}(\bm M_0, \bm m_0, \bs \lambda,\bs \xi)$. The dual variables $\bs \xi,\bs \lambda$ combine into a dual certificate $Z$, and must obey the following three conditions. Let 
\begin{align}
T =\left\{\bs m_0\bs v^T + \bs v\bs m_0^T,\quad \bs v\in \mathbb{R}^{|\mathbb{N}_2^{K}|}\right\},\label{Tspace}
\end{align}
$T^\perp$ being its orthogonal complement, and let $Y_T$ denote the projection of the matrix $Y$ onto the subspace $T$. 

\begin{enumerate}[1)]
\item $\displaystyle Y = \bm I - \bs \xi - \sum_{\ell}\sum_{\bs \kappa\in \mathbb{N}^K_{2(t- d_{h_\ell} )}}\lambda_{\ell\bs \kappa}\left(\sum_{\bs \zeta} \frac{(h_\ell)_{\bs \zeta}}{\|\bm B_{\bs \zeta+\bs \kappa}\|^2_F}\bm B_{\bs \zeta+ \bs \kappa} \right)$\\
\item $Y_T = 0, \quad Y_{T^\perp}\succeq 0$\\
\item $\langle \bm B_{\bs \gamma}, \bs \xi\rangle =0,\qquad \forall \bs\gamma\neq 0.$
\end{enumerate}

Conditions 1 and 2 are obtained by requiring the derivative of this Lagrangian with respect to the moments matrix $\bs M$ belongs to the normal cone (subdifferential of the indicator of the PSD cone) at $\bs M_0$, and Condition 3 is obtained by requiring that the derivative of the Lagrangian with respect to the vector of moments, $m$, vanishes.

The following proposition guarantees unique recovery in addition to the optimality ensured by the satisfiability of conditions $1)$ to $3)$.
\begin{proposition}\label{propositionUniqueness}
To ensure \textit{unique} recovery of $\bm M_0$, in addition to the conditions 1), 2), and 3) mentioned above, it is sufficient to require
$Y_{T^\perp} \succ 0$ as well as injectivity on $T$ of all the linear constraints $\mathcal{A}(\bm M) = \bs b$ arising from the measure version of the polynomial constraints $h_\ell(\bm z) = 0$ as well as from the structure of the moments matrix.
\end{proposition}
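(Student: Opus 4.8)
The claim is a standard strengthening of first-order optimality conditions into a sufficient condition for \emph{uniqueness}, so the plan is to follow the usual convex-duality argument and show that strict complementarity ($Y_{T^\perp}\succ 0$) plus injectivity on $T$ rules out any second minimizer. First I would record that conditions 1)--3) already certify that $\bm M_0$ is \emph{a} minimizer of~\eqref{momentProblem}: the matrix $Y$ from condition 1) is the gradient of the Lagrangian with respect to $\bm M$, condition 3) encodes stationarity in the moment variables $\bm m$, and condition 2) places $Y$ in the normal cone to the PSD cone at $\bm M_0$. The goal is then to upgrade ``a minimizer'' to ``the unique minimizer''.

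The core step is the complementary-slackness computation. Suppose $\bm M$ is any feasible point of~\eqref{momentProblem}; I would write $\bm M = \bm M_0 + \bm H$, where $\bm H$ is a feasible perturbation (so $\bm H$ lies in the kernel of the linear constraint map $\mathcal{A}$ and $\bm M_0+\bm H \succeq 0$). Using condition 1) to substitute for $\bm I$ inside the objective, the difference in objective values $\tr(\bm M)-\tr(\bm M_0)=\langle \bm I,\bm H\rangle$ decomposes into a term that vanishes on the constraint kernel (because $Y-\bm I$ is a combination of the constraint matrices $\bm B_{\bs\zeta+\bs\kappa}$ and the Hankel multipliers $\bs\xi$, both orthogonal to $\bm H$ by feasibility) plus the term $\langle Y,\bm H\rangle$. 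Since $Y_T=0$ we have $\langle Y,\bm H\rangle=\langle Y_{T^\perp},\bm H_{T^\perp}\rangle$, and because $\bm M_0 \succeq 0$ with range contained in $T$ while $\bm H$ must keep $\bm M_0+\bm H$ PSD, one shows $\bm H_{T^\perp}\succeq 0$; combined with $Y_{T^\perp}\succeq 0$ this gives $\langle Y,\bm H\rangle \ge 0$, reproving optimality. For \emph{uniqueness}, the strict positivity $Y_{T^\perp}\succ 0$ forces any optimal $\bm H$ to satisfy $\bm H_{T^\perp}=0$, i.e. every alternate minimizer must have its perturbation lying entirely in $T$.

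The final step is where injectivity on $T$ enters: once $\bm H\in T$, I would invoke that the linear constraints $\mathcal{A}$ are injective on $T$, so that a feasible perturbation with $\bm H\in T\cap\ker\mathcal{A}$ must be $\bm H=0$, hence $\bm M=\bm M_0$. This is the conceptual heart, and I expect the main obstacle to be the careful verification of the two geometric facts supporting the inner-product argument: (i) that feasibility indeed makes $\bm H$ orthogonal to all the constraint matrices appearing in $Y-\bm I$ (including the Hankel-structure multipliers $\bs\xi$, via condition 3)), and (ii) that the PSD feasibility of $\bm M_0+\bm H$ together with $\operatorname{range}(\bm M_0)\subseteq T$ genuinely yields $\bm H_{T^\perp}\succeq 0$ rather than merely a one-sided bound. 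Both are standard in the nuclear-norm/matrix-completion literature (cf.~\cite{candes2009exact}), and the argument transfers verbatim once the space $T$ in~\eqref{Tspace} is correctly identified as the tangent space to the rank-one manifold at $\bm M_0=\bm m_0\bm m_0^T$; I would therefore present these as lemmas and keep the main argument short.
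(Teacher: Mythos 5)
Your argument is correct and follows essentially the same route as the paper: the paper computes $\mathrm{Tr}(\bm M_0)=\langle Y_2,\bm M\rangle$ directly using $I_T=(Y_2)_T$, the membership of $Y_1$ in $\mathrm{range}(\mathcal{A}^*)$, and $\langle\bs\xi,\bm M-\bm M_0\rangle=0$, then concludes $\mathrm{Tr}(\bm M_0)<\mathrm{Tr}(\bm M)$ whenever $\bm M_{T^\perp}\neq 0$ from $(Y_2)_{T^\perp}\prec I_{T^\perp}$ and $\bm M\succeq 0$, finishing with injectivity on $T$ — which is exactly your computation rewritten in terms of $\bm H=\bm M-\bm M_0$. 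The two supporting facts you flag (orthogonality of $\bm H$ to the constraint matrices and to $\bs\xi$, and $\bm H_{T^\perp}\succeq 0$ from PSD feasibility with $\mathrm{range}(\bm M_0)\subseteq T$) are precisely the ones the paper invokes.
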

\begin{proof} We will now use the decompositions $Y = \bm I - Y_2 = I - Y_1 - \bs \xi$
\begin{align}
\text{Tr}(\bm M_0)&= \langle I,\bm M_0 \rangle = \langle I_T,\bm M_0 \rangle  =\langle (Y_2)_T, \bm M_0\rangle\\
& = \langle Y_2,\bm M_0-\bm M\rangle +  \langle Y_2,\bm  M\rangle = \langle Y_2, \bm M \rangle\label{boundTrace1}\\
& = \langle I_T, \bm M_T\rangle  + \langle (Y_2)_{T^\perp}, \bm M \rangle \\
& = \text{Tr}(\bm M_T)  + \langle (Y_2)_{T^\perp}, \bm M \rangle \\
& < \text{Tr}(\bm M)\qquad\text{for $\bm M_{T^\perp}\neq 0$} 
\end{align}
In~\eqref{boundTrace1}, we use $\langle \bs \xi, \bs M - \bs M_0\rangle = 0$ as well as the fact that $Y_1$ belongs to the range of $\mathcal{A}^*$ and $\mathcal{A}(\bm M) = \mathcal{A}(\bm M_0)$. The last inequality follows from $ (Y_2)_{T^\perp} \prec I_{T^\perp} $ which since $\bm M\succeq 0$ implies $\langle (Y_2)_{T^\perp}, \bm M\rangle < \text{Tr}(\bm M_{T^\perp})$ for $\bm M_{T^\perp}\neq 0$. This last inequality thus implies $\bm M_{T^\perp}=0$. Finally $\bm M_T  = (\bm  M_0)_T$ by injectivity of the constraints on $T$.
\end{proof}
Note that, to satisfy $Y_{T^\perp} \succ 0$ and $Y_T = 0$, it is sufficient to ask for $\bs m_0\in \text{Null}(Y)$ and to require $Y$ to be positive semidefinite and exact rank $|\mathbb{N}_2^{K}|-1$. In the next section, we show how the duality between sum-of-squares polynomials and positive semidefinite matrices can help us construct a dual certificate satisfying those conditions.

\subsection{\label{sec:polynomialform}Sum-of-squares and positive semidefinite matrices}

We call sum-of-squares (SOS) polynomial, any polynomial $p(z)$ for which there exists a decomposition $p(z) = \sum_{j=1}^m s_j^2(z)$ for some polynomials $s_j\in \mathbb{R}[\bm z]$.
Introducing a polynomial version of proposition~\ref{propositionUniqueness} requires the following lemma from~\cite{laurent} relating SOS and semidefinite programming (SDP). For completeness we also provide a proof.

\begin{proposition}[\label{sosissdp}Equivalence between SOS and SDP]Let $\mathbb{N}^K_{2t}$ denoe the set set of $K$-tuples  $\bs \alpha\in \mathbb{N}^K$ such that $\sum_i \alpha_i \leq 2t$ and let $\x^{\bs \alpha} = x_1^{\alpha_1}x_2^{\alpha_2}\ldots x_K^{\alpha_K}$. Let $p(\z)\in \mathbb{R}[\bm z]$ with $\displaystyle p(\z)=\sum_{\bs \alpha\in \mathbb{N}_{2t}^K} p_{\bs \alpha} \x^{\bs \alpha}$ be a polynomial of degree $\leq 2t$, the following assertions are equivalent,
\begin{enumerate}[1)]
\item $p(\z)$ is a sum-of-squares polynomial
\item There exists a positive semidefinite matrix $\bm A$ such that 
\begin{equation}
p(\z) = \z_{\mathcal{B}}^T \bm A\b\z_{\mathcal{B}},
\end{equation}
\end{enumerate}
\end{proposition}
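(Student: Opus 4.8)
The plan is to prove the equivalence between being a sum-of-squares polynomial and admitting a positive semidefinite Gram matrix representation via the standard Gram matrix argument, establishing the two implications separately.

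\textbf{The direction $2) \Rightarrow 1)$.} First I would assume the existence of a positive semidefinite matrix $\bm A$ with $p(\z) = \z_{\mathcal{B}}^T \bm A \z_{\mathcal{B}}$, where $\z_{\mathcal{B}}$ denotes the vector of monomials of degree at most $t$. Since $\bm A \succeq 0$, I would invoke a spectral (or Cholesky) factorization $\bm A = \sum_{j} \lambda_j \bs u_j \bs u_j^T$ with $\lambda_j \geq 0$, equivalently $\bm A = \bm L^T \bm L$ for some matrix $\bm L$. Substituting this into the quadratic form yields
\begin{align}
p(\z) = \z_{\mathcal{B}}^T \bm L^T \bm L \z_{\mathcal{B}} = \| \bm L \z_{\mathcal{B}}\|^2 = \sum_{j} \left( (\bm L \z_{\mathcal{B}})_j \right)^2,
\end{align}
and I would observe that each $(\bm L \z_{\mathcal{B}})_j = \sum_{\bs\alpha} L_{j\bs\alpha} \z^{\bs\alpha}$ is a genuine polynomial $s_j(\z) \in \mathbb{R}[\bm z]$, so that $p(\z) = \sum_j s_j^2(\z)$ exhibits $p$ as a sum of squares. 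This direction is essentially immediate once the factorization is in hand.

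\textbf{The direction $1) \Rightarrow 2)$.} Conversely, I would assume $p(\z) = \sum_{j=1}^m s_j^2(\z)$ with each $s_j \in \mathbb{R}[\bm z]$. Since $p$ has degree at most $2t$, each square $s_j^2$ has degree at most $2t$, and I would argue that each $s_j$ therefore has degree at most $t$ (a square cannot cancel its own top-degree term, as the leading form of $s_j^2$ is the square of the leading form of $s_j$ and hence does not vanish). This lets me write each $s_j(\z) = \bs s_j^T \z_{\mathcal{B}}$ for a coefficient vector $\bs s_j$ indexed by monomials of degree $\leq t$. Then
\begin{align}
p(\z) = \sum_{j=1}^m (\bs s_j^T \z_{\mathcal{B}})^2 = \z_{\mathcal{B}}^T \left( \sum_{j=1}^m \bs s_j \bs s_j^T \right) \z_{\mathcal{B}} = \z_{\mathcal{B}}^T \bm A \z_{\mathcal{B}},
\end{align}
where $\bm A := \sum_{j} \bs s_j \bs s_j^T$ is manifestly positive semidefinite as a sum of rank-one PSD matrices. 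This produces the required Gram matrix.

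\textbf{Expected obstacle.} The only genuinely delicate point is the degree bookkeeping in the $1) \Rightarrow 2)$ direction, ensuring that the squares $s_j$ involve only monomials in $\z_{\mathcal{B}}$ (degree $\leq t$) so that the Gram matrix $\bm A$ lives in the correct ambient space matching $p$'s degree bound. A secondary subtlety, which the paper's own notation touches on via the matrices $\bm B_{\bs\gamma}$, is that the Gram representation of a fixed polynomial is \emph{not} unique: distinct matrices $\bm A$ can yield the same $p$ because $\z_{\mathcal{B}} \z_{\mathcal{B}}^T$ has repeated entries (several monomial products coincide). I would remark that the equivalence as stated only asserts existence of \emph{some} PSD Gram matrix, so non-uniqueness does not affect the proof; it is, however, precisely this freedom in the affine family of Gram matrices that will later be exploited when searching for the dual certificate via semidefinite programming.
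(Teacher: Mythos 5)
Your proof is correct and follows essentially the same route as the paper: the paper also obtains $1)\Rightarrow 2)$ by assembling the Gram matrix $\bm A=\sum_j \bs s_j\bs s_j^T$ from the coefficient vectors of the squares, and $2)\Rightarrow 1)$ from the spectral decomposition $\bm A=\sum_j\mu_j\bm v_j\bm v_j^T$, $\mu_j\ge 0$. Your degree bookkeeping (and the remark on non-uniqueness of the Gram matrix, which the paper defers to Lemma~\ref{auxlemma1}) is if anything slightly more careful than the paper's own treatment.
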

\begin{proof}	
If $p(\x)$ is SOS then $p(\x) = \sum_j s_j^2(\x)$ for some polynomial $s_j(\x)$. Let $d=\max\{\deg(s_j)\}$ denote the maximum degree of the $s_j(\x)$. Further let $\delta = \lceil d/2\rceil $. For each of the $s_j(\x)$, for some ordering of the monomials, construct the corresponding vector of coefficients $\bs s_j\in \mathbb{R}^\delta$ with ${\bm s}_j(\x)=s_j^T\x_{\mathcal{B}}$, then the positive semidefinite matrix $\bm A = \sum_j \bs s_j\bs s_j^T$ satisfies $\x^T\bm A\x = p(\x)\geq 0$ for all $\x$. Conversely, let $\bm A$ be a matrix such that $p(\x) = \x^T\bm A\x$. Since $\bm A \succeq 0$, it has the spectral decomposition $\bm A = \sum_j \mu_j \bm v_j \bm v_j^T$ for some $\mu_j \geq 0$. Then we write $p(\x) = \sum_j \mu_j (\x^T \bm v_j)^2$, which is a sum of squares. 


\end{proof}

It is important to notice that proposition~\ref{sosissdp} doesn't provide a strict equivalence between a matrix certificate and a polynomial certificate. Observe that the existence of a sum-of-squares polynomial $p(\bs x)$ such that $p(\bs x) = \mu_{\bs x}^T\bm A\mu_{\bs x}$ doesn't imply that $\bm A$ is positive semidefinite. In other words, not all matrices encoding sum-of-squares polynomials are PSD.
As an illustration, consider the following example:

\begin{example}[sum-of-squares and positive semidefiniteness]
$$\bm A = \left(\begin{array}{ccc}
0& 0 &0\\
0& 1 &0\\
0& 0 &0\\
\end{array}\right),\;\bm B = \left(\begin{array}{ccc}
0& 0 &1/2\\
0& 0 &0\\
1/2& 0 &0\\
\end{array}\right),\;\bm C = \left(\begin{array}{ccc}
0& 0 &1/4\\
0& 1/2 &0\\
1/4& 0 &0\\
\end{array}\right).$$

For the vector of monomials $\z_\mathcal{B} = (1\quad z\quad z^2)$. All those matrices are encoding the same SOS polynomial $p(z)$
$$\z_\mathcal{B}^T\bm A \z_\mathcal{B}= \z_\mathcal{B}^T\bm B \z_\mathcal{B} = \z_\mathcal{B}^T\bm C \z_\mathcal{B}= p(z)= z^2.$$ 
However, only one of them is positive semidefinite. The second and third ones can therefore not be used as the matrix form of a SOS-type certificate. However, note that there exists a matrix $\bs \xi$ such that $\langle \bs \xi,\bm B_\gamma\rangle = 0 $ for all $\bs \gamma$, satisfying $\bm C + \bs \xi = \bm A$ or equivalently $\bm B + \bs \xi = \bm A$. Indeed, for $\bm C$ it suffices to take 
$$\bs \xi = \left(\begin{array}{ccc}
0 & 0 & -1/4\\
0 & 1/2 & 0\\
-1/4 & 0 & 0
\end{array}\right) $$
This is the point of the following lemma which formalizes and closes the gap between matrix and polynomial certificate.
\end{example}

The following lemma proves equivalence of the matrix certificates up to a $\bs \xi$ provided that the corresponding polynomials are the same.
\begin{lemma}\label{auxlemma1}
Let $Y_1$ and $Y_2$ be two matrices such that $\z_{\mathcal{B}}^T Y_1\z_{\mathcal{B}} = \z_{\mathcal{B}}^T Y_2\z_{\mathcal{B}}$ for all $z$, i.e., the polynomials corresponding to $Y_1$ and $Y_2$ are identical. Then there exists a matrix $\bs \xi$ with $\langle \bs \xi, \bm B_{\bs \gamma}\rangle =0 $ for all $\bs \gamma$ and such that $Y_1 = Y_2 + \bs \xi$. 
\end{lemma}
\begin{proof}
$\z_{\mathcal{B}}^T(Y_1-Y_2)\z_{\mathcal{B}} =0 \quad \forall z \; \Leftrightarrow \; \langle Y_1-Y_2, \z_{\mathcal{B}} \z_{\mathcal{B}}^T\rangle =0 \quad \forall z  \; \; \Rightarrow \; \langle Y_1-Y_2, \bm B_{\bs \gamma}\rangle = 0 \quad \forall \bs \gamma$. This last implication holds in the reverse direction: if a polynomial $p(\z)$ has all zero coefficients, then it must be the zero polynomial. 
\end{proof}

The conditions of proposition~\ref{propositionUniqueness}, together with proposition~\ref{sosissdp} and lemma~\ref{auxlemma1} imply the following result, arising from the polynomial nature of problem~\eqref{matrixcompletion},

\begin{proposition}[Polynomial Form]\label{certificateuniquePoly}
To ensure \textit{unique recovery} of $\bm M_0 = \bs m_0 \bs m_0^T$ with $(\bs m_0)_{\bs \gamma} = \z_0^\gamma$, in addition to the injectivity of the constraints on $T$, it is sufficient to find a sum of $(|\mathbb{N}_2^{K}|-1)$ linearly independent squares $s_j^2(z)$ of degree less than or equal to $4$, polynomials $\lambda_\ell(z)$ of degree less than or equal to $4-2d_{h_\ell}$ and constant $\rho$ such that
\begin{equation}
q(z) = \sum_j s_j^2(z) = \sum_{\bs \alpha\in \mathbb{N}^K_2} {\bf z}^{2\bs \alpha}- \rho + \sum_{\ell} h_\ell(z) \lambda_\ell(z),\label{Polycert1}\end{equation}
and such that $q(z_0) = 0$.
\end{proposition}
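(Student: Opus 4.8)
The plan is to read Proposition~\ref{certificateuniquePoly} as the exact polynomial transcription of the matrix certificate of Proposition~\ref{propositionUniqueness}, using the SOS/SDP dictionary of Proposition~\ref{sosissdp} and the representation-ambiguity Lemma~\ref{auxlemma1} to move between the two languages. Concretely, given an SOS polynomial $q(z)=\sum_j s_j^2(z)$ made of exactly $|\mathbb{N}_2^K|-1$ linearly independent squares of degree at most $2$, Proposition~\ref{sosissdp} produces a matrix $Y=\sum_j\bs s_j\bs s_j^T\succeq 0$ of rank exactly $|\mathbb{N}_2^K|-1$ with $q(z)=\z_\mathcal{B}^T Y\z_\mathcal{B}$. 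I would take this $Y$ as the candidate dual certificate and verify conditions 1)--3) of Section~\ref{sec:dualcert} together with the strict form $Y_{T^\perp}\succ 0$ demanded by Proposition~\ref{propositionUniqueness}.

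First I would set up the dictionary between the terms of the right-hand side of~\eqref{Polycert1} and matrices. Using $\frac{1}{\|\bm B_{\bs\gamma}\|_F^2}\langle\Z_\mathcal{B},\bm B_{\bs\gamma}\rangle=\z^{\bs\gamma}$ from the notation section, one checks that $\z_\mathcal{B}^T\bm I\z_\mathcal{B}=\sum_{\bs\alpha\in\mathbb{N}_2^K}\z^{2\bs\alpha}$ reproduces the trace term, that $\z_\mathcal{B}^T\left(\sum_{\bs\zeta}\frac{(h_\ell)_{\bs\zeta}}{\|\bm B_{\bs\zeta+\bs\kappa}\|_F^2}\bm B_{\bs\zeta+\bs\kappa}\right)\z_\mathcal{B}=h_\ell(z)\,\z^{\bs\kappa}$, so that summing against the weights $\lambda_{\ell\bs\kappa}$ rebuilds $\sum_\ell h_\ell(z)\lambda_\ell(z)$ with $\lambda_\ell(z)=\sum_{\bs\kappa}\lambda_{\ell\bs\kappa}\z^{\bs\kappa}$ of degree at most $4-2d_{h_\ell}$, and that the scalar $\rho$ is carried by $\rho\,\bs e_1\bs e_1^T$. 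Collecting these, the matrix $Y'':=\bm I+\sum_\ell\sum_{\bs\kappa}\lambda_{\ell\bs\kappa}\left(\sum_{\bs\zeta}\frac{(h_\ell)_{\bs\zeta}}{\|\bm B_{\bs\zeta+\bs\kappa}\|_F^2}\bm B_{\bs\zeta+\bs\kappa}\right)-\rho\,\bs e_1\bs e_1^T$ satisfies $\z_\mathcal{B}^T Y''\z_\mathcal{B}=q(z)=\z_\mathcal{B}^T Y\z_\mathcal{B}$ (the sign of the $\lambda$-multipliers relative to condition~1 being immaterial, since we are free to choose them).

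Lemma~\ref{auxlemma1} then yields $\bs\xi'$ with $\langle\bs\xi',\bm B_{\bs\gamma}\rangle=0$ for all $\bs\gamma$ and $Y=Y''-\bs\xi'$. Setting $\bs\xi:=\bs\xi'+\rho\,\bs e_1\bs e_1^T$ and using that $\bm B_{\bs\gamma}$ has a vanishing $(1,1)$ entry for every $\bs\gamma\neq 0$ gives $\langle\bs\xi,\bm B_{\bs\gamma}\rangle=0$ for all $\bs\gamma\neq 0$, which is condition 3), while $Y=\bm I-\bs\xi-\sum_\ell\sum_{\bs\kappa}\lambda_{\ell\bs\kappa}\left(\sum_{\bs\zeta}\frac{(h_\ell)_{\bs\zeta}}{\|\bm B_{\bs\zeta+\bs\kappa}\|_F^2}\bm B_{\bs\zeta+\bs\kappa}\right)$ is condition 1). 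For condition 2) I would invoke $q(z_0)=0$: since $q=\sum_j s_j^2\geq 0$ and $\z_\mathcal{B}$ evaluated at $z_0$ equals $\bs m_0$ (because $(\bs m_0)_{\bs\gamma}=\z_0^{\bs\gamma}$), the identity $\bs m_0^T Y\bs m_0=q(z_0)=0$ with $Y\succeq 0$ forces $Y\bs m_0=0$; the rank count then pins $\mathrm{Null}(Y)=\mathrm{span}(\bs m_0)$. From $Y\bs m_0=0$ and symmetry, $\langle Y,\bs m_0\bs v^T+\bs v\bs m_0^T\rangle=0$ for every $\bs v$, i.e. $Y_T=0$; hence $Y=Y_{T^\perp}$ is PSD with one-dimensional kernel $\mathrm{span}(\bs m_0)$, so it is positive definite on $\bs m_0^\perp$, which is exactly $Y_{T^\perp}\succ 0$. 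Proposition~\ref{propositionUniqueness} and the assumed injectivity on $T$ then close the argument.

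I expect the difficulty here to be bookkeeping rather than conceptual: keeping the normalizations $\|\bm B_{\bs\gamma}\|_F^{-2}$ and the degree truncations ($\deg\lambda_\ell\leq 4-2d_{h_\ell}$, $\deg s_j\leq 2$) consistent so that $Y''$ genuinely represents~\eqref{Polycert1}, and handling cleanly the passage between ``$Y_{T^\perp}\succ 0$ as an element of the matrix subspace $T^\perp$'' and ``$Y$ positive definite on the coordinate subspace $\bs m_0^\perp$''. The one genuinely substantive point is that the \emph{exact} rank $|\mathbb{N}_2^K|-1$, i.e. having precisely $|\mathbb{N}_2^K|-1$ linearly independent squares, is what upgrades $Y_{T^\perp}\succeq 0$ to the strict $Y_{T^\perp}\succ 0$ that Proposition~\ref{propositionUniqueness} needs for \emph{uniqueness} rather than mere optimality.
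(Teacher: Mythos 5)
Your proposal is correct and follows essentially the same route as the paper: build the matrix representation of the right-hand side of~\eqref{Polycert1}, invoke Lemma~\ref{auxlemma1} to adjust by a $\bs\xi$ orthogonal to the $\bm B_{\bs\gamma}$ so that the certificate becomes the PSD matrix $\sum_j\bs s_j\bs s_j^T$, use $q(z_0)=0$ (equivalently $s_j(z_0)=0$ for all $j$) to obtain $Y_T=0$, and use the count of $|\mathbb{N}_2^K|-1$ linearly independent squares to get the exact rank needed for $Y_{T^\perp}\succ 0$. Your version is somewhat more explicit than the paper's on the bookkeeping of $\rho$ and on deriving $Y\bs m_0=0$ from $\bs m_0^TY\bs m_0=0$ with $Y\succeq 0$, but there is no substantive difference in the argument.
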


\begin{proof}
The form of the polynomial $q(z)$ in~\eqref{Polycert1} implies the existence of a matrix $Y_1$ in the range of $\mathcal{A}^*$ such that $\z_{\mathcal{B}}^T(I-Y_1)\z_{\mathcal{B}} = \sum_{j}s_j^2(z)$. By lemma~\ref{auxlemma1}, we can then add a matrix $\bs \xi$ to $I-Y_1$ to get the positive semidefinite matrix $\sum_{j}\bs s_j\bs s_j^T = I-Y_2 = I-Y_1-\bs \xi \succeq 0$ which now satisfies the condition $Y_{T}^\perp\succeq 0$ of section~\ref{sec:dualcert}. Note that such a $\bs \xi$ always exists, by lemma~\ref{auxlemma1}, as we have  
$$q(z) = \z_{\mathcal{B}}^T\sum_{j} \bs s_j\bs s_j^T \z_{\mathcal{B}}=\z_{\mathcal{B}}^T( I-Y_2)\z_{\mathcal{B}} = \z_{\mathcal{B}}^T( I-Y_1-\bs \xi)\z_{\mathcal{B}} = \z_{\mathcal{B}}^T( I-Y_1)\z_{\mathcal{B}}$$
Finally, as indicated by proposition~\ref{certificateuniquePoly}, since $q(z)$ is SOS, to satisfy the last condition, $Y_T = 0$, it suffices to require $s_j(z_0)= 0$. Indeed, for $(\bs m_0)_{\bs \gamma} = \z_0^\gamma$, we have 
$$\{s_j(z_0)=0, \; \forall j \}\iff\{\langle \sum_j \bs s_j\bs s_j^T,\bs m_0 \bs y^T + \bs y \bs m_0^T\rangle, \; \forall \bs y\in \mathbb{R}^{|\mathbb{N}_2^{K}|}\}= 0.\;$$
The value of the constant $\rho$, which derives from the one degree of freedom of $\bs B_{0}$, is fixed by enforcing $q(z_0)=0$. The last term on the RHS of~\eqref{Polycert1} is a contribution of degree $\leq 4$ from the ideal $\;\mathcal{I} := \{\sum_{j=1}^L u_j(z)h_j(z)\;|\;u_1,\ldots,\;u_L\in \mathbb{R}[\bm z]\}$ generated from the constraints $h_j(z)$.
\end{proof}

Because of proposition~\ref{sosissdp} and lemma~\ref{auxlemma1}, we can now just focus on finding a (dual) polynomial $q(z)$ with the structure~\eqref{Polycert1}.

\subsection{\label{sec:proof}Construction of the dual polynomial}

In this section we show how to construct the dual polynomial satisfying the decomposition~\eqref{Polycert1}. As explained above, such a polynomial implies the existence of a matrix $Y$ satisfying the conditions 1) to 3) and proposition~\ref{propositionUniqueness} and serves as the first part of the proof of Theorem~\ref{maintheorem}. We then prove injectivity on $T$ to conclude this proof.

Remember that $\bs z$ is given by the concatenation $\bs z= (\bs x,\;\bs y)$ of all first order monomials arising in problem~\eqref{matrixcompletion}. Our construction of the certificate is based on choosing the squares on the LHS of~\eqref{Polycert1} to be the canonical polynomials $(\z^{\bs \alpha}-\z_0^{\bs \alpha})^2$ for all $|\bs \alpha|\leq 2$ and to show that those canonical squares can be obtained from the ideal; the squared monomials arising from the trace norm and the constant $\rho = \sum_{\bs \alpha} \z_0^{2\bs \alpha}$. The resulting expression for the certificate is simply
$$q(z) = \sum_{|\bs \gamma|\leq 2} (\z^{\bs \gamma} - \z_0^{\bs \gamma})^2.$$
First, let us show that for all monomials $\z^{\bs \alpha}$ with $|\bs \alpha|=1$ one can build the polynomial $-2\z^{\bs \alpha} \z_0^{\bs \alpha}+2(\z^{\bs \alpha}_0)^2$ by using a decomposition from the ideal of degree at most $3$. 

\begin{itemize}
\item Either the constraint $\z^{\bs \alpha} = \z^{\bs \alpha}_0$ is present explicitly ($\z^{\bs \alpha} = y_\ell$ corresponds to an element of the first row of $\bm X$ and $h_\ell( z) \equiv y_\ell - (y_0)_\ell$ is a constraint in $\Omega$) and one can then just multiply this constraint by $-2(\z_0^{\bs \alpha})$ to get the desired polynomial $-2(\z_0)^{\bs \alpha} \z^{\bs \alpha} + 2(\z_0^{\bs\alpha})^{2}$

\item Or, since the bipartite graph is connected, the first order monomial $\z^{\bs \alpha}, \; |\bs \alpha|=1,$ appears in a chain like~\eqref{generalchain}, such that if we denote the corresponding numerical values by $(z_0)_{i_1}$, $(z_0)_{i_1}(z_0)_{i_2}$, \ldots,$(z_0)_{i_{\ell-1}} (z_0)_\ell$, the constraints $z_{i_1} - (z_0)_{i_1},\ldots, z_{i_\ell-1} z_\ell - (z_0)_\ell (z_0)_{i_{\ell-1}}$ belong to $\Omega$ and thus to the ideal $\mathcal{I}$. Using~\eqref{generalchain}, one can thus recursively combine the elements of the chain in the following way, 
\begin{align}
(z_0)_{i_{\ell-2}}(z_0)_{i_{\ell-1}} (z_\ell - (z_0)_\ell) &= (z_\ell z_{i_{\ell-1}} - (z_0)_\ell (z_0)_{i_{\ell-1}})z_{i_{\ell-2}}\nonumber \\
&- (z_{i_{\ell-2}} z_{i_{\ell-1}} - (z_0)_{i_{\ell-2}} (z_0)_{i_{\ell- 1}})z_{\ell}\nonumber \\
&+ (z_0)_{\ell} (z_0)_{i_{\ell-1}} (z_{i_{\ell-2}} - (z_0)_{i_{\ell-2}}).\label{chain2}
\end{align}
This telescoping relation holds for all $\ell$ throughout the chain until the second element, ($z_{i_2}$), for which we have 
$(z_0)_{i_1}(z_{i_2} - (z_0)_{i_2})=(z_{i_2}z_{i_1} - (z_0)_{i_2}(z_0)_{i_1}) -z_{i_2} (z_{i_1}-(z_0)_{i_1})\in \mathcal{I}.$ 
The key here is that one can make use of the bilinear constraints to get a propagation argument which remains degree-$3$ since the multiplicative factor $(z_0)_{\ell}(z_0)_{i_{\ell-1}}$ in front of the propagation term $(z_{i_{\ell-2}} - (z_0)_{i_{\ell-2}})$ remains constant. In particular, note that we never use the third order constraints $z^{\bs \alpha}(z_{i_1} - (z_0)_{i_1})$ for $|\bs \alpha|=2$, namely the highest degree of the monomials multiplying the first order constraints is one. This will be important later when establishing the stability result.

\end{itemize}

Now that we can build the polynomials $-2(z_0)_k z_k + 2 (z_0)_k^2$ for all $k$ as degree-$3$ decompositions from the ideal $\mathcal{I}$, one can just add those polynomials to the trace and constant $\rho$ contributions $z_k^2 - (z_0)_k^2$ in order to get the squares $(z_k - (z_0)_k)^2$. We thus get $|\mathbb{N}_1^{K}|-1$ of the required squares. The remaining $K \choose 2$ decompositions for the second order squared polynomials $(\z^{\bs \alpha} - \z_0^{\bs \alpha})^2$ for $|\bs \alpha|=2$, are built from the first order decompositions, the trace, and constant $\rho$ as follows. $\forall \bs \alpha,\bs \beta$ with $|\bs \alpha|,|\bs \beta|=1$, 
\begin{align}\begin{split}
(\z^{\bs \alpha} \z^{\bs \beta} - \z_0^{\bs \alpha} \z_0^{\bs \beta} )^2 &= (\z^{\bs \alpha} \z^{\bs \beta})^2 -  (\z_0^{\bs \alpha} \z_0^{\bs \beta})^2 \\
& -2 \z_0^{\bs \alpha} \z_0^{\bs \beta} (\z^{\bs \alpha} \z^{\bs \beta} - \z_0^{\bs \alpha} \z_0^{\bs \beta}), \end{split}\label{secondOrderDec1}  
\end{align}
where the first two terms arise from the contribution of the trace and $\rho$, and the third one can be expressed from the ideal $\mathcal{I}$ with degree at most 4, as
\begin{align}\begin{split}
-2 \z_0^{\bs \alpha} \z_0^{\bs \beta} (\z^{\bs \alpha} \z^{\bs \beta} - \z_0^{\bs \alpha} \z_0^{\bs \beta}) 
&= (-2\z^{\bs \alpha} \z_0^{\bs \alpha} + 2(\z_0^{\bs \alpha})^2) (\z_0^{\bs \beta})^2 \\
&+ (\z^{\bs \beta} - \z^{\bs \beta}_0)(-2\z^{\bs \alpha} \z^{\bs \alpha}_0)\z_0^{\bs \beta}
\end{split}\label{secondOrderDec2}
\end{align}
The first term is of degree at most $3$ and the second one is of degree at most $4$.\\

To conclude the proof of Theorem~\ref{maintheorem}, we show that the linear map $\mathcal{A}$ grouping the linear constraints derived from the polynomials $h_\ell$ and the structure of the moments matrix,
is injective on $T$. For this purpose, let us show that the nullspace of $\mathcal{A}$ is empty on $T$.  Let us consider any $\bm H = \bs m_0 \bs v^T + \bs v \bs m_0^T$. Normalization of $\bm H_{11}$ implies $v_1=0$ and reduces $\bm H$ to a matrix for which the first column equals the first row and is given by $(v_2\ldots, v_{|\mathbb{N}_2^K|})$. Then recall that there is a least one constraint setting to zero one of the elements of the first column. So there exists $\ell$ s.t. $v_\ell = 0$. Accordingly the whole corresponding row and column reduce to $((z_0)_\ell v_k)_{k\leq |\mathbb{N}_2^K|}$. Since $(z_0)_\ell\neq 0$\footnote{Recall that we assumed $(\bm X_0)_{ij}\neq 0$ for all $(i,j)$}, one can then  apply the next constraint $z_\ell z_m = 0$ which implies $v_m = 0$. By recursively applying this idea, one can show that the first block of $\bm H$ corresponding to the monomials of degree at most two is zero. The remaining part of the matrix can then be set to $0$ as well trough the structural constraints (equality of corresponding monomials) for the first row/column and then using the fact that $\bm H$ is defined as $\bm m_0 \bs v^T + \bs v \bm m_0^T$. 

\section{\label{sec:stability}Stability}

In this section, we prove Theorem~\ref{noisycase} and Corollary~\ref{noisycaseCorollary}. We let the noisy measurements be given by $(\tilde{X}_0)_{ij} = (X_0)_{ij} + \varepsilon_{ij}$ for $(i,j)\in \Omega$. We further let $\tilde{h}_\ell$ denote the corresponding noisy constraints. If $h_\ell(\z):=\z^{\bs \alpha} - \z^{\bs \alpha}_0$ denotes a constraint in $\Omega$ with either $\z^{\bs \alpha} = x_{i+1}y_{j}$ and $(i,j)\in \Omega$ or $\z^{\bs \alpha} = y_{\ell}$, $(1,\ell)\in \Omega$, we let $\tilde{h}_\ell(\z):=\z^{\bs \alpha} - \tilde{\z}^{\bs \alpha}_0$ denote the corresponding noisy constraint with $\tilde{\z}_0^{\bs \alpha} = \z_0^{\bs \alpha} + \varepsilon_{ij}$. Hence, $\tilde{h}_\ell(\z) = h_\ell(\z) - \varepsilon_{ij}$ with $(i,j)$ relative to the constraint indexed by $\ell$, or with a slight abuse of notation, 
$\tilde{h}_\ell(\z) = h_\ell(\z) - \varepsilon_{\ell}$.

Let $\eta\geq \|\varepsilon\|_2\sqrt{(1+\|\z_0\|_1^2|)}$ with $\|\varepsilon\|_2 = \sqrt{\sum_{ij\in \Omega} \varepsilon_{ij}^2}$. The stable version of~\eqref{momentProblem} reads, 
\begin{align}\begin{split}
\text{minimize} \quad & \text{Tr}(\bm M)\\
\text{subject to} \quad & \sqrt{\sum_{\bs \kappa}\sum_{\ell}\left|\sum_{\bs \zeta} \frac{(\tilde{h}_\ell)_{\bs \zeta}}{\|\bm B_{\bs \zeta+\bs \kappa}\|^2_F}\langle \bm M, \bm B_{\bs \zeta+\bs \kappa}\rangle \right|^2}\leq \eta.\\
& \text{for}\quad \bs \kappa\in \mathbb{N}^K_{2(t- d_{h_\ell} )}, 1\leq \ell \leq L\\
& \bm M\succeq 0,\quad \bm M_{11} = 1,\\
& \langle \bm M, \bm E_{\bs \delta_1,\bs \delta_2} - \bm E_{\bs \gamma_1,\bs \gamma_2}\rangle = 0,\\ 
& \mbox{for all $(\bs \delta_1,\bs \delta_2), (\bs \gamma_1,\bs \gamma_2)\;\mbox{s.t.}\;\bs \delta_1+ \bs \delta_2= \bs \gamma_1+ \bs \gamma_2\leq 2t$}.
\end{split}
\label{noisy}
\end{align}
The first constraint in formulation~\eqref{noisy} is simply the $\ell_2$ norm of the constraints appearing in~\eqref{momentProblem}. In this first constraint, the first sum is taken over the different noisy polynomials $h_\ell$ and the second is taken over all the ``shifts" of those polynomials. For a given $\bs \kappa$, the corresponding shifted polynomial is simply obtained by multiplying $h_\ell$ by the corresponding monomial $\z^{\bs \kappa}$. 

It is worth pointing out that formulation~\eqref{noisy} is not unit-independent, since the moment matrix $M$ mixes different powers of the original variables. This can be remedied by assigning dimensional weights $w_{\gamma}$ to the $B_\gamma$ matrices -- an operation that modifies the numerics and the theory in an obvious way. Formulation~\eqref{noisy} leads to the recovery result of Theorem~\ref{noisycase} which is restated below for clarity.

\theoremCompletion*

The stability result of Theorem~\ref{noisycase} can be improved if a path is known that relates one entry to all the others. In this last case, the scalings can be reduced from $\mathcal{O}((m+n)^{7/2})$ to $\mathcal{O}((m+n)^{2})$. This is the point of Corollary~\ref{noisycaseCorollary} which is proved in section~\ref{proofCorol},

\corollaryCompletion*

Let $\mathcal{P}_1,\ldots, \mathcal{P}_P$ denote the sets of constraints that appear along each path between the root node and the leaf nodes in the sense of~\eqref{generalchain}. As explained in section~\ref{sec:proof}, the dual certificate only relies on the monomials appearing along each of the paths multiplied either by the previous missing variable or the next one. For each path $\mathcal{P}_i$, let $\mathcal{K}_i$ denote the subset of multi-indices corresponding to variables that are multiplying the constraints in the chain in the expression of the certificate~\eqref{chain2}. Let $\eta'\geq \|\varepsilon\|_2\sqrt{1+\sup_{|\bs \alpha|\leq 1} \z_0^{2\bs \alpha}}$. The formulation for Corollary~\ref{noisycaseCorollary} is obtained by replacing the $\ell_2$ constraint in~\eqref{noisy} by a corresponding $\ell_2$ term minimizing the noise along the paths,
\begin{align}
\sqrt{\sum_{\bs \kappa\in \mathcal{K}_i}\sum_{\ell\in \mathcal{P}_i}\left|\sum_{\bs \zeta} \frac{(\tilde{h}_\ell)_{\bs \zeta}}{\|\bm B_{\bs \zeta+\bs \kappa}\|^2_F}\langle \bm M, \bm B_{\bs \zeta+\bs \kappa}\rangle \right|^2}\leq \eta', \quad i=1,\ldots, P.\label{constraintsPathsL2}
\end{align}
The improvement in the prefactors of Theorem~\ref{noisycase} essentially arises from the tighter bound $\eta'$ on the $\ell_2$ constraints in~\eqref{constraintsPathsL2}. This tighter bound is due to the fact that along a given path, the constraints are always distinct and that following the discussion in section~\eqref{sec:proof}, one can express each first order monomial that appear in the path from the constraints along the path multiplied by either the first or the previous or next missing degree one monomial. This idea is expressed through section~\ref{proofCorol}. 

\subsection{\label{sec:proofStabilityTh}Proof of Theorem~\ref{noisycase}}

Let $\|\bm M\|_p$ to denote the Schatten $p$-norm of $\bm M$, 
$$\|\bm M\|_p = \left(\sum_k \sigma_k^p\right)^{1/p}.$$
We therefore have $\|\bm M\|_1 = \|\bm M\|_*$ which denotes the nuclear norm of $\bm M$, $\|\bm M\|_2 = \|\bm M\|_F$ which is used to denote the Frobenius norm of $\bm M$ and $\|\bm M\|_\infty=\|\bm M\|$ which denotes the operator norm of $\bm M$. %

Any solution $\bm M$ to~\eqref{noisy} reads $\bm M = \bm H + \bm M_0$. 
To prove stability of the recovery, we first highlight the following, 
\begin{itemize}
\item $\text{Tr}(\bm M_0+ \bm H)\leq \text{Tr}(\bm M_0)$ and therefore $\text{Tr}(\bm H)\leq 0$. 
\item Both $\bm M$ and $\bm M_0$ are feasible points for~\eqref{noisy}, and hence both satisfy the normalization constraint $\bm M_{11} = (\bm M_0)_{11}$ which can be exactly enforced. As a consequence, $\bm H_{11} = (\bm M_0)_{11} - \bm M_{11} = 0$, and all degree zero terms in the constraints $\tilde{h}_1,\ldots,\tilde{h}_L$ vanish when those constraints are applied to $\bm H$. We have
\begin{align}
\frac{(\tilde{h}_\ell)_{\bs 0}}{\|\bm B_{\bs 0}\|^2_F}\langle \bm H, \bm B_{\bs 0}\rangle = 0, \qquad \ell=1,\ldots, L.
\end{align}
More generally, for both $\bs M$ and $\bs M_0$, as $\eta$ is bounding the vector $(\varepsilon_{\ell} \z_0^{\bs \kappa})_{\ell,|\bs \kappa|\leq 2}$ of weighted residuals, we must have,
$$\sum_{\ell,\kappa}\left|\sum_\zeta (\tilde{h}_\ell)_{\zeta } \langle \bs B_{\zeta+\kappa} , \bs M_0\rangle \right|^2= \sum_{\ell,\kappa}\left|\sum_\zeta (h_\ell)_{\zeta } \langle \bs B_{\zeta+\kappa} , \bs M_0\rangle - \varepsilon_\ell \z_0^{\kappa}\right|^2 \leq\sum_{\ell,\kappa}( \varepsilon_\ell \z_0^{\bs \alpha})^2\leq \eta^2$$
$$\sum_{\ell,\kappa}\left|\sum_\zeta (\tilde{h}_\ell)_{\zeta } \langle \bs B_{\zeta+\kappa} , \bs M\rangle \right|^2 \leq\sum_{\ell,\kappa}( \varepsilon_\ell \z_0^{\bs \alpha})^2\leq \eta^2$$
From those relations we can derive a similar bound on $\bs H$,
\begin{align*}
\sqrt{\sum_{\bs \kappa}\sum_{\ell}\left|\sum_{\bs \zeta} \frac{(\tilde{h}_\ell)_{\bs \zeta}}{\|\bm B_{\bs \zeta+\bs \kappa}\|^2_F}\langle \bm H, \bm B_{\bs \zeta+\bs \kappa}\rangle \right|^2}& = \sqrt{\sum_{\bs \kappa}\sum_{\ell}\left|\sum_{\bs \zeta} \frac{(\tilde{h}_\ell)_{\bs \zeta}}{\|\bm B_{\bs \zeta+\bs \kappa}\|^2_F}\langle \bm M - \bm M_0, \bm B_{\bs \zeta+\bs \kappa}\rangle \right|^2}\\
& \leq 2\eta.
\end{align*}
\item Finally, note that $\bm H + \bm M_0\succeq 0$ implies $\langle \bm H+ \bm M_0, \bm W\rangle \geq 0$ for all $\bm W\succeq 0$ including all $\bm W\in T^\perp$ which implies $\bm H_{T^\perp}\succeq 0$. 
\end{itemize}

The polynomial form of $\bm Y_2$ belongs to the range of $\mathcal{A}^*$ (i.e, its polynomial form belongs to the ideal $\mathcal{I}$) modulo a $\bs \xi$ and $\bm Y_2$ is written as 
\begin{align}\bm Y_2 =  \bs \xi +\sum_{\ell}\sum_{\bs \kappa\in \mathbb{N}^K_{2(t- d_{h_\ell} )}}\lambda_{\ell\bs \kappa}\left(\sum_{\bs \zeta} \frac{(h_\ell)_{\bs \zeta}}{\|\bm B_{\bs \zeta+\bs \kappa}\|^2_F}\bm B_{\bs \zeta+ \bs \kappa} \right),\label{reminderCertY2}\end{align}
where $\bs \xi$ is orthogonal to the $\bm B_{\bs \gamma}$. 


The next section derives a bound on $\|\bs H_{T}^\perp\|$. For this, we start by bounding $|\langle \bs H, \bs Y_1\rangle |$. Note that $|\langle \bs Y_2,\bs H\rangle| = |\langle \bs Y_1,\bs H\rangle|$, as $\langle \bs \xi, \bs H\rangle = 0$.

\subsubsection[bound]{Bound on $\bm H_T^\perp$}

The certificate~\eqref{reminderCertY2} is built from the noiseless constraints $h_\ell$, while the solutions $\bs M$, $\bs M_0$ and thus $\bs H$ are bounded with respect to the corrupted constraints from~\eqref{noisy}. As we saw earlier, the noisy constraints relate to the noiseless constraints as
\begin{align}
\tilde{h}_\ell(z)= \sum_{\zeta} (\tilde{h}_\ell)_\zeta \z^{\zeta} = \sum_{\zeta\neq 0} (h_\ell)_\zeta \z^{\zeta} + (h_\ell)_0 - \varepsilon_\ell   = h_\ell(z) - \varepsilon_\ell.
\end{align}
Let $\bs Y_1^{(1)}$ and $\bs Y_1^{(2)}$ denote the contributions to $Y_1$ corresponding to the first and second order squares in the sos certificate of section~\ref{sec:proof} respectively. For any constraint $h_\ell$, using the recursion~\eqref{chain2}, the difference between $h_\ell$ and $\tilde{h}_\ell$ will only affect the entries in $\bs H$ corresponding to first and zero order moments. Since $\bs H_{1,1} = 0$ (see the discussion above), this discrepancy will thus only affect first order entries. Let $W_\ell$ denote the number of times that each constraint is used in the construction of $Y_1^{(1)}$ 
\begin{align}
|\langle \bs Y_1^{(1)},\bs  H\rangle | &=\left|\sum_{\kappa} \sum_\ell W_{\ell,\kappa} \sum_{\zeta} (h_\ell)_{\zeta} \langle \bs B_{\zeta+\kappa}, \bs H\rangle \right|\\
&= \left|\sum_{\kappa} \sum_\ell W_{\ell,\kappa} \sum_{\zeta} (\tilde{h}_\ell)_{\zeta} \langle \bs B_{\zeta+\kappa}, \bs M\rangle - \sum_{|\kappa|\leq 1} \sum_\ell W_{\ell,\kappa} \varepsilon_\ell \langle \bs B_{\kappa}, \bs M\rangle \right|
\end{align}
\begin{align}
|\langle \bs Y_1^{(1)},\bs  H\rangle |& \leq \left|\sum_{\kappa} \sum_\ell W_{\ell,\kappa} \sum_{\zeta} (\tilde{h}_\ell)_{\zeta} \langle \bs B_{\zeta+\kappa}, \bs M\rangle \right| + \left|\sum_{|\kappa|\leq  1} \sum_\ell W_{\ell,\kappa} \varepsilon_{\ell} \langle \bs B_{\kappa}, \bs M\rangle \right|\label{finalBoundHY0} \\
&\leq \left|\sum_{\kappa} \sum_\ell W_{\ell,\kappa} \sum_{\zeta} (\tilde{h}_\ell)_{\zeta} \langle \bs B_{\zeta+\kappa}, \bs M\rangle \right| + \mathcal{O}((m+n)^{3/2}) \|\bs M_{11}\|_F \|\varepsilon\|_\infty\label{finalBoundHY1}\\
&\leq \left|\sum_{\kappa} \sum_\ell W_{\ell,\kappa} \sum_{\zeta} (\tilde{h}_\ell)_{\zeta} \langle \bs B_{\zeta+\kappa}, \bs M\rangle \right| + \mathcal{O}((m+n)^{3/2}) \|\bs m_0\| \|\varepsilon\|_\infty\label{finalBoundHY1b}\\
&\leq \mathcal{O}((m+n)^{3/2})\eta + \mathcal{O}((m+n)^{3/2})  \|\bs m_0\|  \|\varepsilon\|_\infty\label{finalBoundHY2}
\end{align}


%
In~\eqref{finalBoundHY2} we use the fact that $\bs Y_1^{(1)}\in \mathcal{R}an(\mathcal{A}^*)$ and $\left\{\mathcal{A}(\bs M_0)\right\}_{\ell\neq 0}= 0$ (the first constraint is simply $(\bs M_0)_{11} = 1$ and does not appear in $\bs Y_1$). Let us introduce the following decomposition for $\bs M/\bs M_0$,
\begin{align}
\bs M = \left[\begin{array}{cc}
M_{11} & M_{12}\\
M_{21} & M_{22}\end{array}\right].\label{blockDecomposition}
\end{align}
%
%
%
For both $\bs M$ and $\bs M_0$, because of the structural constraints and PSD constraint,  one can write $\tr(\bs M)\geq \|\bs M_{11}\|^2_F + \tr(\bs M_{11})$. Moreover, we have $\tr(\bs M_0)\geq \tr(\bs M)$ so in particular, we have 
$$\|\bs M_{11}\|_F\leq \sqrt{\tr(\bs M) - \tr(\bs M_{11})}\leq \sqrt{\tr(\bs M)}\leq \sqrt{\tr(\bs M_0)} = \|\bs m_0\|.$$
In~\eqref{finalBoundHY0}, since all the $\bs B_{\bs \kappa}$ are accessing moments of order at most one in $\bs M$, the sum $\sum_{\ell}\sum_{\kappa} \bs B_{\kappa}$ in the second term of~\eqref{finalBoundHY0} has the form $ve_1^* + e_1v^*$ where $v_i \leq  \mathcal{O}(m+n)\|\varepsilon\|_\infty$ and the norm $\|\sum_{\ell}\sum_{\kappa} \bs B_{\kappa}\|_F$ can thus be bounded as $\|\sum_{\ell}\sum_{\kappa} \bs B_{\kappa}\|_F = \mathcal{O}((m+n)^{3/2})$.  We can also replace $\bs M$ by $\bs M_{1,1}$. 
Equations~\eqref{finalBoundHY1} and~\eqref{finalBoundHY1b} then follow from Cauchy-Schwarz. For~\eqref{finalBoundHY2} simply note that
\begin{itemize}
\item In the sum, every constraint appears at most $\mathcal{O}(m+n)$ times (as an example, the first constraint $z_{i_1} - (z_0)_{i_1} = 0$ will appear exactly $m+n$ times as it is used to express every square in the chain), i.e, $W_{\ell,\kappa} = \mathcal{O}(m+n)$ if $\kappa$ corresponds to either of the two monomials multiplying the constraints in~\eqref{chain2} and $0$ otherwise. 
\item Each of the constraints is multiplied by at most two different monomials leading to two distinct entries in the vector $(\varepsilon_{\ell} \z_0^{\bs \alpha})_{\ell,|\bs \alpha|\leq 2}$ whose $\ell_2$ norm is bounded by $\eta$.
\end{itemize}

Equation~\eqref{finalBoundHY2} follows from Cauchy-Schwarz, noting that the first term in~\eqref{finalBoundHY1} can be written as $|\langle \tilde{\mathcal{A}}(\bs M), \bs \lambda\rangle |$ where $\|\bs \lambda\|_\infty = \mathcal{O}(m+n)$ and the $\ell_2$ norm of $\tilde{\mathcal{A}}(\bs H)$ is bounded from the constraints in~\eqref{noisy}. 

We now bound the second order contributions gathered in $\bs Y_1^{(2)}$. From~\eqref{secondOrderDec1}, this contribution can be decomposed as $\bs Y_2^{(2)} = \bs V_1 + \bs V_2$, where $\bs V_1$ only involves the decomposition of first order monomials (first term on the RHS of~\eqref{secondOrderDec1}), and $\bs V_2$ denotes the higher order contributions (second term on the RHS of~\eqref{secondOrderDec1}). The contribution of $\bs V_1$, corresponding to the first term in~\eqref{secondOrderDec1} is identical to the contribution from $\bs Y_1^{(1)}$ except that it is now also summed $|\left\{\bs \beta\;|\;|\bs \beta|\leq 1\right\}|$ times. We thus have 
\begin{align}
|\langle \bs H, \bs V_1\rangle| &= \left|\sum_{|\bs \beta|\leq 1} \sum_{\kappa} \sum_\ell W_{\ell,\kappa} \sum_{\zeta} (h_\ell)_{\zeta} \langle \bs B_{\zeta+\kappa}, \bs H\rangle \right| \\
&= \left|\sum_{|\bs \beta|\leq 1} \sum_{\kappa} \sum_\ell W_{\ell,\kappa} \sum_{\zeta} (\tilde{h}_\ell)_{\zeta} \langle \bs B_{\zeta+\kappa}, \bs M\rangle \right| + \left|\sum_{|\bs \beta|\leq 1} \sum_{\kappa} \sum_\ell W_{\ell,\kappa} \sum_{\zeta = 0} \varepsilon_\ell \langle \bs B_{\zeta+\kappa}, \bs M\rangle \right|  \\
& \leq \max\left\{\mathcal{O}((m+n)^{2})\eta , \mathcal{O}((m+n)^{2})  \|\bs m_0\|  \|\varepsilon\|_\infty\right\}\label{lastBoundSecondOrder1Y}
\end{align}

In~\eqref{lastBoundSecondOrder1Y}, we use the fact that every constraint of the form $h_j(\z)\z^{\bs \beta}$ corresponds to a distinct entry in $\eta$. The sum over the multi-indices $\beta$ is thus included into $\mathcal{O}(m+n)$ entries of $\eta$ which have to be multiplied by $\mathcal{O}(m+n)$ as each constraint $h_j(\z)$ appears at most $\mathcal{O}(m+n)$.

For the second term $\bs V_2$, it suffices to note that this term corresponds to multiplying all the polynomials appearing in $\bs Y_1^{(1)}$ by $\z^{\bs \beta}$ and summing up all the resulting polynomials over all possible first order multi-indices $\bs \beta$. In terms of $\bs V_2$, for the term which is multiplying $\varepsilon$, this means shifting the first column in $\sum_\ell\sum_{\kappa}\bs B_{\kappa}$ into  $\sum_\ell\sum_{\kappa}\bs B_{\kappa+\alpha}$ and summing all resulting matrices over $\bs \alpha$. If we let $\mathcal{C}(\beta)$ denote the set of pairs $(\ell,\bs \kappa)$ representing the constraints $h_\ell(z) z^{\bs \kappa}$ that appear in the expression of $\z^{\bs \beta} - \z_0^{\bs \beta}$ following the decomposition~\eqref{chain2}, for the second order contribution $\bs V_2$, using the decomposition given by the second term on the RHS of~\eqref{secondOrderDec1}, we can write 
\begin{align}
|\langle \bs H, \bs V_2\rangle| &= \left|\sum_{|\bs \alpha|\leq 1}\sum_{|\bs \beta|\leq 1} \sum_{(\ell,\kappa)\in \mathcal{C}(\beta)} \sum_{\zeta} (h_\ell)_{\zeta} \langle \bs B_{\zeta+\kappa+\alpha}, \bs H\rangle\right|\\
 &= \left|\sum_{|\bs \alpha|\leq 1} \sum_{|\bs \beta|\leq 1}\sum_{(\ell,\kappa)\in \mathcal{C}(\beta)} \sum_{\zeta\neq 0} (\tilde{h}_{\ell})_{\zeta} \langle \bs B_{\zeta+\kappa+\alpha}, \bs M\rangle\right. \\
&+ \left.\sum_{|\bs \alpha|\leq 1}\sum_{|\bs \beta|\leq 1} \sum_{(\ell,\kappa)\in \mathcal{C}(\beta)} \sum_{\zeta=0} ((\tilde{h}_{\ell})_{\zeta} - \varepsilon_{\ell}) \langle \bs B_{\zeta+\kappa+\alpha}, \bs M\rangle\right|\\
&\leq \left| \sum_{|\bs \alpha|\leq 1}\sum_{|\bs \beta|\leq 1}\sum_{(\ell,\kappa)\in \mathcal{C}(\beta)} \sum_{\zeta} (\tilde{h}_{\ell})_{\zeta} \langle \bs B_{\zeta+\kappa+\alpha}, \bs M\rangle\right| \\
&+ \left| \sum_{|\bs \alpha|\leq 1}\sum_{|\bs \beta|\leq 1}\sum_{(\ell,\kappa)\in \mathcal{C}(\beta)} \sum_{\zeta=0} \varepsilon_{\ell} \langle \bs B_{\kappa+\alpha}, \bs M\rangle\right|\\
&\leq \left| \sum_{|\bs \alpha|\leq 1}\sum_{|\bs \beta|\leq 1}\sum_{(\ell,\kappa)\in \mathcal{C}(\beta)} \sum_{\zeta} (\tilde{h}_{\ell})_{\zeta} \langle \bs B_{\zeta+\kappa+\alpha}, \bs M\rangle\right|\label{noisySecondOrder} \\
&+\mathcal{O}((m+n)^2) \|\bs m_0\| \|\varepsilon\|_\infty\label{bound}
\end{align}
In~\eqref{noisySecondOrder} we use the discussion above and the fact that,again since $\bs B_{\bs \kappa}$ only targets monomials of order at most $2$, we can focus on the submatrix $\bs M_{1,1}$ from the decomposition~\eqref{blockDecomposition}. Moreover, the norm $\|\sum_{|\bs \beta|\leq 1} \sum_{|\bs \alpha|\leq 1}\sum_{(\ell,\kappa)\in \mathcal{C}(\beta)} \varepsilon_{\ell} \bs B_{\kappa+\alpha}\|_F = \|\varepsilon\|_\infty\mathcal{O}(m+n) \|\bs 1\bs 1^*\|_F = \|\varepsilon\|_\infty\mathcal{O}((m+n)^2)$. Equation~\eqref{bound} follows from Cauchy-Schwarz. To bound~\eqref{noisySecondOrder}, simply use the result of~\eqref{finalBoundHY2} (first term), noting that in each $\z^{\beta} - \z_0^{\beta}$ each polynomial from the ideal appears at most $\mathcal{O}(m+n)$ times. Then use the fact that every $\alpha$ in $(\z^{\beta} - \z_0^{\beta})\z^{\bs \alpha}$ gives a different constraint in~\eqref{noisy} so that the sum over $\alpha$ can be included within $\eta$. Apply Cauchy-Schwarz to $|\langle \lambda_2, \tilde{\mathcal{A}}(M)\rangle |$ with the bound on $\tilde{\mathcal{A}}(H)$ given by~\eqref{noisy} and $\|\bs \lambda\| = \mathcal{O}(m+n)$. This gives the following bound $|\langle \bs H, \bs V_2\rangle|$
\begin{align}
|\langle \bs H, \bs V_2\rangle| \leq \max\left\{(m+n)^{2}\eta,\mathcal{O}((m+n)^2) \|\bs m_0\|_2 \|\varepsilon\|_\infty\right\}\label{lastBoundSecondOrder2Y}
\end{align}
%


Consider the sum-of-squares certificate of section~\eqref{sec:proof}. In polynomial form, we have seen that this certificate reads $\sum_{|\bs \gamma|\leq 2} (\z^{\bs \gamma} - \z_0^{\bs \gamma})^2 = \sum_{|\bs \gamma|\leq 2} (\z^{\bs \gamma} - (\bs m_0)_{\bs \gamma})^2$. One possible matrix representation\footnote{An alternative representation would be given by the decomposition Trace + ideal of section~\eqref{sec:proof} and encoded as $\bs I - \bs Y_1$.} of this certificate is thus given by $\bar{Z} = \sum_j \bs s_j\bs s_j^T$ where each $\bs s_j$ denote a vector of the form $-(m_0)_{\bs \gamma}\bs e_1 + \bs e_{\bs \gamma}$. Using this form, we get
\begin{align}
\bar{Z} &= \bm I - \bs m_0\bs e_1^T - \bs e_1\bs m_0^T + \|\bs m_0\|^2\bs e_1\bs e_1^T = \bm I - \bm Y_2. 
\end{align}
For $\bs m\perp \bs m_0$, with $\|\bs m\|=1$ and $m_1 = \bs e_1^T \bs m$, we have
$$\langle \bm Y_2, \bs m\bs m^T\rangle  = - m_1^2\|\bs m_0\|^2\leq 0.$$
This last equation implies that $ (\bm Y_2)_{T^\perp}\preceq 0$. 


Let $\bs I - \bs Y_1$ denote the matrix form of the polynomial certificate constructed in section~\ref{sec:proof}. As we have $\z_{\mathcal{B}}^*(\bs I - \bs Y_1)\z_{\mathcal{B}} = \z_{\mathcal{B}}(\bm I - \bm Y_2)\z_{\mathcal{B}} = \sum_{|\bs \gamma|\leq 2} (\z^{\bs \gamma} - \z^{\bs \gamma}_0)^2 = \sum_j s_j(\z)^2$, proposition~\eqref{auxlemma1} applies and there exists a matrix $\bs \xi$ satisfying $\bs \xi + Y_1 = Y_2$.
Recall that the certificate reads 
\begin{align}
Y &= \bm I - \bm Y_2 = \bm I - \bs \xi - \bm Y_1 = \bm I - \bs \xi - \sum_{\ell}\sum_{\bs \kappa\in \mathbb{N}^K_{2(t- d_{h_\ell} )}}\lambda_{\ell\bs \kappa}\left(\sum_{\bs \zeta} \frac{(h_\ell)_{\bs \zeta}}{\|\bm B_{\bs \zeta+\bs \kappa}\|^2_F}\bm B_{\bs \zeta+ \bs \kappa} \right)  
\end{align}	
Now using $\mbox{Tr}(\bm H)\leq 0$, we can write, 
\begin{align}
0 \; \geq \; &\text{Tr}(\bm H_T) + \text{Tr}(\bm H_{T^\perp})\\
= \; &\langle \bm H, \bm I_T\rangle + \langle \bm H, \bm I_{T^\perp}\rangle \\
= \; &\langle \bm H, \bm I_T\rangle - \langle \bm H, \bm Y_2 \rangle  + \langle \bm H, \bm Y_2\rangle  +\langle \bm H, \bm I_{T^\perp}\rangle \\
= \; &\langle \bm H_T, \bm I_T - (\bm Y_2)_T\rangle - \langle \bm H_{T^\perp}, (\bm Y_2)_{T^\perp} \rangle  + \langle \bm H, \bm Y_2\rangle  +\langle \bm H, \bm I_{T^\perp} \rangle \\
\geq \;  & - \langle \bm H_{T^\perp}, (\bm Y_2)_{T^\perp} \rangle  -| \langle \bm H, \bm Y_2\rangle|  +\langle \bm H, \bm I_{T^\perp} \rangle \\
\geq \;  & - \langle \bm H_{T^\perp}, (\bm Y_2)_{T^\perp} \rangle  -| \langle \bm H, \bm Y_1\rangle|  +\langle \bm H, \bm I_{T^\perp} \rangle \label{TraceBound1a}\\
\geq \; &   -| \langle \bm H, \bm Y_1\rangle|  +\text{Tr}(\bm H_{T^\perp}) 
\label{Tracebound1b}
\end{align}
As explained above, $\bm Y_1$ is used to denote the component of the dual certificate which is in the range of $\mathcal{A}^*$, i.e. $\bm Y_1= \mathcal{A}^*\lambda$. In~\eqref{TraceBound1a}, we use the fact that $\bs Y_1 = \bs Y_2 + \bs \xi$ and $\langle \bs \xi, \bs B_{\bs \gamma}\rangle  = 0$, for all $\bs \gamma$. Since both $\bs M$ and $\bs M_0$ are solutions to problem~\eqref{noisy}. Both of these matrices thus satisfy the structural constraints exactly, and read $\bs M = \sum_{\bs \gamma} m_{\gamma}\bs B_{\bs \gamma}$, $\bs M_0 = \sum_{\bs \gamma} (m_0)_{\bs \gamma}\bs B_{\gamma}$ for some $m_{\gamma}$. Together with lemma~\ref{auxlemma1}, this implies $\langle \bs H, \bs \xi\rangle = \langle \bs M-\bs M_0,\bs \xi\rangle  = 0$. 
Finally, in~\eqref{Tracebound1b}, we use the fact that for a positive semidefinite matrix $\bm H_{T^\perp}$, and a matrix $(\bm Y_2)_{T^\perp}$ such that $(\bm Y_2)_{T^\perp}\preceq 0$, $\langle \bm H_{T^\perp}, (\bm Y_2)_{T^\perp}\rangle  \leq 0$. 

We also use the fact that both $\bm M$ and $\bm M_0$ satisfies the structural constraints so that $\langle \bs \xi, \bm H\rangle =0$. The last line implies 
%
\begin{align}
\text{Tr}(\bm H_{T^\perp}) \leq |\langle \bm H, \bm Y_1 \rangle |&\leq \max\left\{(m+n)^{2}\eta,\mathcal{O}((m+n)^2) \|\bs m_0\|_2 \|\varepsilon\|_\infty\right\}
\label{traceBound}\end{align}
Finally $|\langle \bs H, \bs Y_1\rangle|$ is bounded from~\eqref{finalBoundHY2}, ~\eqref{lastBoundSecondOrder1Y} and~\eqref{lastBoundSecondOrder2Y}.

\subsubsection[bound HT]{Bound on $\bs H_T$}
We now use a more quantitative version of injectivity of the linear map $\mathcal{A}$, encoding the polynomial constraints, on $T$ to derive a bound on $\bm H_T$. Let $\bm H_T$ be expressed as $\bm H_T = \bs y\bs m_0^T + \bs m_0\bs y^T$ for some $\bs y\in \mathbb{R}^{\mathbb{N}_2^K}$ (see~\eqref{Tspace}). 


Using this decomposition for $\bs H_T$, and letting $h_{i_1}\rightarrow h_{i_2}\rightarrow \ldots$ denote the ordered series of constraints making the chain~\eqref{generalchain}, we have
\begin{align*}
y_{i_1} + y_1(\bs z_0)_{i_1} &= \left\{\tilde{\mathcal{A}}(\bm H_T)\right\}_1= \sum_{|\bs \zeta|>0} \frac{(\tilde{h}_{i_1})_{\bs \zeta}}{\|\bm B_{\bs \zeta}\|^2_F}\langle \bm H_T, \bm B_{\bs \zeta}\rangle  \\
y_{i_1}(\bs z_0)_{i_2} +  y_{i_2}(\bs z_0)_{i_1} &= \left\{\tilde{\mathcal{A}}(\bm H_T)\right\}_2= \sum_{|\bs \zeta|>0} \frac{(\tilde{h}_{i_2})_{\bs \zeta}}{\|\bm B_{\bs \zeta}\|^2_F}\langle \bm H_T, \bm B_{\bs \zeta}\rangle\\
y_{i_2}(\bs z_0)_{i_3} +  y_{i_3}(\bs z_0)_{i_2} &= \left\{\tilde{\mathcal{A}}(\bm H_T)\right\}_3= \sum_{|\bs \zeta|>0} \frac{(\tilde{h}_{i_3})_{\bs \zeta}}{\|\bm B_{\bs \zeta}\|^2_F}\langle \bm H_T, \bm B_{\bs \zeta}\rangle\\
y_{i_3}(\bs z_0)_{i_4} +  y_{i_4}(\bs z_0)_{i_3}& = \ldots
\end{align*}
%
To derive a bound for $\bm H_T$, we then isolate each of the entries in $\bs y$ as,
\begin{align}\begin{split}
y_{i_1} &= \sum_{|\bs \zeta|>0} \frac{(\tilde{h}_{i_1})_{\bs \zeta}}{\|\bm B_{\bs \zeta}\|^2_F}\langle \bm H_T, \bm B_{\bs \zeta}\rangle - y_1(z_0)_{i_1} \\
y_{i_2}& = \frac{1}{(z_0)_{i_1}} \left[ \sum_{|\bs \zeta|>0} \frac{(\tilde{h}_{i_2})_{\bs \zeta}}{\|\bm B_{\bs \zeta}\|^2_F}\langle \bm H_T, \bm B_{\bs \zeta}\rangle  - (z_0)_{i_2}y_{i_1}\right] \\
y_{i_3}& =  \frac{1}{(z_0)_{i_2}}\left[\sum_{|\bs \zeta|>0} \frac{(\tilde{h}_{i_3})_{\bs \zeta}}{\|\bm B_{\bs \zeta}\|^2_F}\langle \bm H_T, \bm B_{\bs \zeta}\rangle -y_{i_2}(z_0)_{i_3} \right] \\
y_{i_4}&=\ldots\end{split}\label{path1NoPath}\end{align}
\eqref{path1NoPath} thus gives a general expression for every first order entry $y_{i_\ell}$ of $\bs y$, as a weighted combination of the constraints which can be considered as a noisy version of~\eqref{Polycert1} or~\eqref{chain2}. Generally, every first order $y_{i_\ell}$ can thus be expressed as the weighted combination
\begin{align}
y_{i_\ell}& = \frac{1}{(\bs z_0)_{i_{\ell-1}}}\sum_{|\bs \zeta|>0} \frac{(\tilde{h}_{i_\ell})_{\bs \zeta}}{\|\bm B_{\bs \zeta}\|^2_F}\langle \bm H_T, \bm B_{\bs \zeta}\rangle \pm \sum_{j=1}^{\ell-1} \frac{ (\bs z_0)_{i_\ell}}{\bs z_0(i_{\ell - j-1})\bs z_0(i_{\ell-j})} \sum_{|\bs \zeta|>0} \frac{(\tilde{h}_j)_{\bs \zeta}}{\|\bm B_{\bs \zeta}\|^2_F}\langle \bm H_T, \bm B_{\bs \zeta}\rangle\pm (z_0)_{i_\ell}(\bm H_{T^\perp})_{11}.\label{injectivityChain}
\end{align} 
The last term in~\eqref{injectivityChain} follows from $y_1(z_0)_1 = (H_T)_{11} = H_{11} - (H_T^\perp)_{1,1} = - (H_T^\perp)_{1,1}$. Let $(\bs y_{\bs \alpha})_{|\bs \alpha|\leq 1}$ denote the entries in $\bs y$ corresponding to the multi-indices that give rise to degree one monomials. Let $C_3$ bound each of the weights appearing in front of the constraints making up the chain in~\eqref{injectivityChain}. The first order part of $\bs y$, $(\bs y_{\bs \alpha})_{|\bs \alpha|\leq 1}$, has length $m+n$ and each of its entry is bounded by at most a sum of all the constraints making the connected path in the bipartite graph. One can thus write
\begin{align}\|(\bs y_{\bs \alpha})_{|\bs \alpha|\leq 1}\|_2 &\leq C_3 (m+n)^{1/2} \left(\sum_{\ell}\left|\sum_{|\bs \zeta|>0} \frac{(\tilde{h}_\ell)_{\bs \zeta}}{\|\bm B_{\bs \zeta}\|^2_F}\langle \bm H_T, \bm B_{\bs \zeta}\rangle \right| + \|\bm H_{T^\perp}\|_1\right)\nonumber\\
&\leq C_3 (m+n)^{1/2}\left(\sum_{\ell}\left|\sum_{|\bs \zeta|>0} \frac{(\tilde{h}_\ell)_{\bs \zeta}}{\|\bm B_{\bs \zeta}\|^2_F}\langle \bm H_T, \bm B_{\bs \zeta}\rangle \right| + \|\bm H_{T^\perp}\|_1\right).\label{step1a}\end{align}
The constant $C_3$ depends on the entries of $\bs X_0$ as 
$$C_3 = \mathcal{O}\left( \sup_{(k,\ell)\in \Omega} \frac{|z_0(m)|}{|z_0(k)z_0(\ell)|}\right) = \mathcal{O}(1).$$
To bound the second order components of $\bs y$, we use the structural constraints $\langle \bm M, \bm E_{\bs \delta_1,\bs \delta_2} - \bm E_{\bs \gamma_1,\bs \gamma_2}\rangle = 0$, as those are not affected by the noise. Those constraints are enforcing equality between the entries $y_{\ell}m_0(k) + y_{k}m_0(\ell)$ and the second order entries of the first column of $\bm H_T$, namely $y(\ell,k)m_0(1) + y_1m_0(\ell,k)$. For multiindices $\bs \alpha$ and $\bs \beta$ such that $|\bs \alpha| = |\bs \beta|=1$, using those relations, we therefore have
\begin{align}
y_{\bs \alpha + \bs \beta} (\z_0)_1 + (y)_{1} (\z_0)^{\bs \alpha + \bs \beta} & =  y_{\bs \alpha} (m_0)_{\bs \beta} + y_{\bs \beta} (m_0)_{\bs \alpha} + \langle \bm H_T, \bm E_{\bs \alpha+ \bs \beta, 1} - \bm E_{\bs \alpha,\bs \beta}\rangle,\\
&=  y_{\bs \alpha} (m_0)_{\bs \beta} + y_{\bs \beta} (m_0)_{\bs \alpha} + \langle \bm H - \bm H_{T^\perp}, \bm E_{\bs \alpha+ \bs \beta, 1} - \bm E_{\bs \alpha,\bs \beta}\rangle,\\
&=  y_{\bs \alpha} (m_0)_{\bs \beta} + y_{\bs \beta} (m_0)_{\bs \alpha} - \langle \bm H_{T^\perp}, \bm E_{\bs \alpha+ \bs \beta, 1} - \bm E_{\bs \alpha,\bs \beta}\rangle.\label{BoundHigherOrdermomentsHT}
\end{align} 
The last line follows from the fact that $\langle \bm M- \bm M_0, \bm E_{\bs \delta_1,\bs \delta_2} - \bm E_{\bs \gamma_1,\bs \gamma_2}\rangle = 0$ for any $\delta_1+\delta_2 = \gamma_1 +\gamma_2$. Using~\eqref{step1a} as well as H\"{o}lder's inequality and the fact that, for a constant $C_4$, $\|\bm E_{\bs \delta_1,\bs \delta_2} - \bm E_{\bs \gamma_1,\bs \gamma_2}\|_\infty\leq C_4$ for any $(\bs \delta_1,\bs \delta_2),\; (\bs \gamma_1,\bs \gamma_2)$, one can write,
\begin{eqnarray}
\displaystyle \|y_{(\bs \alpha, \bs \beta)}\|_2&\displaystyle\lesssim (m+n)\left(\left\|\left(\sum_{|\bs \zeta|>0} \frac{(\tilde{h}_\ell)_{\bs \zeta}}{\|\bm B_{\bs \zeta}\|^2_F}\langle \bm H_T, \bm B_{\bs \zeta}\rangle\right)_\ell\right\|_1 + \|\bm H_{T^\perp}\|_1 + |y_1| \right)\\
&\displaystyle\lesssim (m+n)\left(\left\|\left(\sum_{|\bs \zeta|>0} \frac{(\tilde{h}_\ell)_{\bs \zeta}}{\|\bm B_{\bs \zeta}\|^2_F}\langle \bm H_T, \bm B_{\bs \zeta}\rangle\right)_\ell\right\|_1 + \|\bm H_{T^\perp}\|_1\right) \label{step2a}
\end{eqnarray} 
In~\eqref{step2a}, we again use $|y_1(m_0)_1| = |(\bm H_T)_{11}| = |\bm H_{11} - (\bm H_{T^\perp})_{11}| = |(\bm H_{T^\perp})_{11}|$. Combining~\eqref{step1a} and~\eqref{step2a}, we get the following bound on $\bm H_T$, 
\begin{align}
\|\bm H_T\|_F&\leq \|\bs m_0\|_2\|\bs y\|_2\nonumber\\
&\leq C_5 \|\bs m_0\|_2 (m+n)\left(\sum_{\ell}\left|\sum_{|\bs \zeta|>0} \frac{(\tilde{h}_\ell)_{\bs \zeta}}{\|\bm B_{\bs \zeta}\|^2_F}\langle \bm H_T, \bm B_{\bs \zeta}\rangle \right| + \|\bm H_{T^\perp}\|_1\right).
\label{BT2Th2}\end{align}
The second term can be bounded by~\eqref{traceBound}. To bound the first term, note that we have $\bm H_T = \bm H - \bm H_{T^\perp}$ and that the expression obtained by considering the first term above and substituting $\bs H$ for $\bs H_T$ can be bounded through the $\ell_2$ constraint in~\eqref{noisy}. We can thus focus on bounding this term when replacing $\bs H_T$ by $\bs H_T^\perp$. In each first order constraint $\tilde{h}_\ell$, there is only one non zero coefficient $(\tilde{h}_\ell)_{\bs \zeta}$ for $|\bs \zeta|>0$ and each $\bm B_{\bs \gamma}$ only has $\mathcal{O}(1)$ non zero entries. Moreover none of the constraints in the chain are targeting the same entry in $\bs H_T^\perp$. Let $\Omega_P$ denote the moments matrix defined as $(\Omega_P)_{\bs \zeta} = \sign (h_\ell)_{\bs \zeta}$, for all $\bs \zeta $ such that there exists a $\ell$ with $(h_\ell)_{\bs \zeta}\neq 0$, and $0$ otherwise. This matrix has Frobenius norm at most $\mathcal{O}(\sqrt{m+n})$. Using this matrix, we can write,
\begin{align}
\sum_{\ell}\left|\sum_{|\bs \zeta|>0} \frac{(\tilde{h}_\ell)_{\bs \zeta}}{\|\bm B_{\bs \zeta}\|^2_F}\langle \bm H_{T^\perp}, \bm B_{\bs \zeta}\rangle \right|&\leq C_6 \left|\langle \Omega_P, \bs H_T^\perp\rangle \right|\label{BoundOnAHTperp1}\\
&\leq C_6 (m+n)^{1/2}\|\bm H_{T^\perp}\|_F\label{BoundOnAHTperp2}\\
&\leq C_6 (m+n)^{1/2}\|\bm H_{T^\perp}\|_1\label{BoundOnAHTperp}
\end{align}
In~\eqref{BoundOnAHTperp1}, we use H\"{o}lder's inequality, together with the fact that the chain has length $\mathcal{O}(m+n)$. Using the trace bound~\eqref{traceBound}, and substituting~\eqref{BoundOnAHTperp} into~\eqref{BT2Th2}, we finally get the bound on $\bm H_T$ as,
\begin{align}
\|(\bm H_T)\|_F &\leq C_7  (m+n)\|\bs m_0\|_2 \left((m+n)^{1/2}\|\bm H_{T^\perp}\|_1 + 
\sum_{\ell}\left|\sum_{\bs \zeta} \frac{(\tilde{h}_\ell)_{\bs \zeta}}{\|\bm B_{\bs \zeta}\|^2_F}\langle \bm H, \bm B_{\bs \zeta}\rangle \right|\right)\\
&\leq C_7  (m+n) \|\bs m_0\|_2 \bigg((m+n)^{1/2}|\langle \bm H,\bm Y_1 \rangle |+ 2\eta (m+n)^{1/2}  \bigg)\\
&\leq C_7  (m+n)^{3/2} \|\bs m_0\|_2 \max\left\{(m+n)^{2}\eta,\mathcal{O}((m+n)^2) \|\bs m_0\|_2 \|\varepsilon\|_\infty\right\}\\
&\leq C_7\|\bs m_0\|^2_2 (m+n)^{5/2}  \eta .
\label{boundHT}
\end{align}
In~\eqref{boundHT}, we use $\|\bs m_0\|_2 = \mathcal{O}(m+n)$. Using this last bound together with~\eqref{traceBound}, we finally get,
\begin{align*}
\|\bm H\|_F &\leq \|\bm H_T\|_F + \|\bm H_{T^\perp}\|_F\leq \|\bm H_T\|_F + \|\bm H_{T^\perp}\|_1\lesssim (m+n)^{5/2}\eta \|\bm M_0\|_F.
\end{align*}
By definition of $\eta$, we also have  $\eta = \mathcal{O}(m+n)\|\varepsilon\|$ which enables to concludes.

The next section shows how the scaling factor can be reduced to $(m+n)^{2}\|\varepsilon\|_2$ when paths are known between any root node and the corresponding leaf nodes in the bipartite graph, and the noise can be constrained along those paths.

\subsection{\label{proofCorol}Proof of Corollary~\ref{noisycaseCorollary}}

The proof of corollary~\ref{noisycaseCorollary} follows the idea of section~\ref{sec:proofStabilityTh} with the difference that we now constrain the noise along the path and consider a reduced SDP. When considering the reduced (sparse) formulation, we only consider monomials of order $2$ that are appearing in the constraints. There are $\mathcal{O}(m+n)$ such monomials. The moments matrix has now size $\mathcal{O}(m+n)\times \mathcal{O}(m+n)$. The part of the certificate expressing first order squares remain unchanged. The second order squares can be written directly from the constraints, trace and constant $\rho$ without the need for any propagation, i.e. $(\z^{\bs \alpha}\z^{\bs \beta} - \z_0^{\bs \alpha}\z_0^{\bs \beta})^2 = -2\z_0^{\bs \alpha}\z_0^{\bs \beta}(\z^{\bs \alpha}\z^{\bs \beta} - \z_0^{\bs \alpha}\z_0^{\bs \beta}) + (\z^{\bs \alpha}\z^{\bs \beta})^2 - (\z_0^{\bs \alpha}\z_0^{\bs \beta})^2$. The certificate thus becomes much sparser. Conditions 1) to 3) still hold for this certificate as it still has the exact same structure $\sum_j \bs s_j\bs s_j^*$ as before except that the number of such squares is reduced. The squared polynomials are now given by $(\z^{\bs \alpha}_0 - \z^{\bs \alpha}_0)^2$, for all $|\bs \alpha|\leq 1$ and $(\z^{\bs \gamma}_0 - \z^{\bs \gamma}_0)^2$ for all $|\bs \gamma|=2$ such that $\z^{\bs \gamma} - \z^{\bs \gamma}_0$ appears in the constraints. As the matrix has now size $|\left\{\bs \alpha, \; |\bs \alpha|\leq 1\right\}| + |\left\{\bs \gamma\;|\;\bs\gamma\in \Omega\right\}|$, the rank condition in 2) still holds as well.

Moreover, from~\eqref{constraintsPathsL2}, we now have for each path $\mathcal{P}_i$, $i=1, \ldots, P$,  
\begin{align}
\sqrt{\sum_{\bs \kappa\in \mathcal{K}_i}\sum_{\ell\in \mathcal{P}_i}\left|\sum_{\bs \zeta} \frac{(\tilde{h}_\ell)_{\bs \zeta}}{\|\bm B_{\bs \zeta+\bs \kappa}\|^2_F}\langle \bm H, \bm B_{\bs \zeta+\bs \kappa}\rangle \right|^2}\leq 2\eta'.
\end{align}
As for the proof of Theorem~\ref{maintheorem}, we let $\bs Y_1^{(1)}$ and $\bs Y_1^{(2)}$, with $\bs Y_1 = \bs Y_1^{(1)} + \bs Y_1^{(2)}$ denote the contributions of first and second order squares to the certificate $\bs Y_1$. To bound the inner product $|\langle \bm H, \bm Y\rangle|$, we once again replace the noiseless constraints appearing in the expression of the SOS certificate with the noiseless constraints that are bounded through~\eqref{constraintsPathsL2}. For the first order contribution $\bs Y_1^{(1)}$, we have, 
\begin{align}
|\langle \bs Y_1^{(1)},\bs  H\rangle | &\leq \left|\sum_{\kappa} \sum_\ell W_{\ell,\kappa} \sum_{\zeta} (\tilde{h}_\ell)_{\zeta} \langle \bs B_{\zeta+\kappa}, \bs M\rangle \right| + \left|\sum_{|\kappa|\leq  1} \sum_\ell W_{\ell,\kappa} \varepsilon_{\ell} \langle \bs B_{\kappa}, \bs M\rangle \right|\\
&\leq \left|\sum_{\kappa} \sum_\ell W_{\ell,\kappa} \sum_{\zeta} (\tilde{h}_\ell)_{\zeta} \langle \bs B_{\zeta+\kappa}, \bs M\rangle \right| + \mathcal{O}((m+n)^{3/2}) \|\bs m_0\| \|\varepsilon\|_\infty\label{boundEpsilon1Cor3}\\
&\leq \mathcal{O}((m+n)^{3/2})\eta' + \mathcal{O}((m+n)^{3/2}) \|\bs m_0\| \|\varepsilon\|_\infty\label{boundEpsilon1Cor3b}
\end{align}
The bound~\eqref{boundEpsilon1Cor3} on the second term follows the exact same reasoning as~\eqref{finalBoundHY1b}. The difference is for the first term, for which we now use the bound on the given path $\mathcal{P}$. For the second term, following the proof of Theorem~\eqref{maintheorem}, noting that we now only use second order moments appearing in the constraints, and using~\eqref{constraintsPathsL2}, we can write 
\begin{align}
|\langle \bs Y_1^{(2)},\bs  H\rangle |& \leq \max\left\{\eta' , \mathcal{O}((m+n)^{3/2})  \|\bs m_0\|  \|\varepsilon\|_\infty\right\}
\end{align}
%
%
%
So that $|\langle \bm H, \bm Y \rangle |\leq (m+n)\max\left\{\mathcal{O}((m+n)^{3/2})\eta' , \mathcal{O}((m+n)^{3/2})  \|\bs m_0\|  \|\varepsilon\|_\infty\right\}$. In a similar way,  the expression for the $y_{i_k}$ in~\eqref{path1NoPath} also relies on the first order constraints making up the path from the root node to $y_{i_k}$ so that the relations and bound in~\eqref{path1NoPath} and~\eqref{step1a} can now be reduced to 
\begin{align}
\|\bs y_{|\bs \alpha|\leq 1}\|_2&\lesssim \sqrt{m+n}\sup_{\mathcal{P}_i}\sum_{\ell\in \mathcal{P}_i}\left|\left(\sum_{|\bs \zeta|>0} \frac{(\tilde{h}_\ell)_{\bs \zeta}}{\|\bm B_{\bs \zeta}\|^2_F}\langle \bm B_{\bs \zeta},\bm H_T\rangle \right)\right|.\label{FirstOrderMomPath}
\end{align}
Relation~\eqref{BoundHigherOrdermomentsHT} still holds. We still only need to account for $(m+n)$ second order monomials corresponding to the $\mathcal{O}(m+n)$ constraints and one can thus simply bound the second order part of $\bs y$ as 
\begin{align}
\|y_{\bs \alpha + \bs \beta}\|_2\displaystyle\lesssim \sqrt{m+n}\sup_{\mathcal{P}_i}\sum_{\ell\in \mathcal{P}_i}\left|\left(\sum_{|\bs \zeta|>0} \frac{(\tilde{h}_\ell)_{\bs \zeta}}{\|\bm B_{\bs \zeta}\|^2_F}\langle \bm B_{\bs \zeta},\bm H_T\rangle \right)\right|  + \sqrt{m+n}\|\bm H_{T^\perp}\|_1\label{SecondOrderMomPath}
\end{align}
The square root in~\eqref{SecondOrderMomPath} comes from the problem-depedent formulation. Grouping~\eqref{FirstOrderMomPath} and~\eqref{SecondOrderMomPath}, the bound on $\bm H_T$ can therefore read,
\begin{align}
\|\bm H_T\|_F&\lesssim \sqrt{m+n}\|\bs m_0\|_2\left(\sup_{\mathcal{P}_i}\sum_{\ell\in \mathcal{P}_i}\left|\left(\sum_{|\bs \zeta|>0} \frac{(\tilde{h}_\ell)_{\bs \zeta}}{\|\bm B_{\bs \zeta}\|^2_F}\langle \bm B_{\bs \zeta},\bm H_T\rangle \right)\right| + \|\bm H_{T^\perp}\|_1\right)\\
&\lesssim \sqrt{m+n}\|\bs m_0\|_2\left(\sup_{\mathcal{P}_i}\sum_{\ell\in \mathcal{P}_i}\left|\left(\sum_{|\bs \zeta|>0} \frac{(\tilde{h}_\ell)_{\bs \zeta}}{\|\bm B_{\bs \zeta}\|^2_F}\langle \bm B_{\bs \zeta},\bm H\rangle \right)\right| +  (m+n)^{1/2} \|\bm H_{T^\perp}\|_1 \right)\\
\label{BT2}\end{align}
Using~\eqref{constraintsPathsL2}, we have 
\begin{align}
\|(\bm H_T)\|_F &\lesssim  \|\bs m_0\|_2(m+n)\max\left\{\mathcal{O}((m+n)^{3/2})\eta' , \mathcal{O}((m+n)^{3/2})  \|\bs m_0\|  \|\varepsilon\|_\infty\right\}
\end{align}
Noting that in the reduced formulation, $\|\bs m_0\|_2 = \mathcal{O}(\sqrt{m+n})$, $\|\bs M_0\|_F = \mathcal{O}(m+n)$ and using $\eta'  =\mathcal{O}(\|\varepsilon\|)$ gives the desired result.

\section{\label{numericalMethodsSec}Numerical methods}

Section~\ref{propagationVSSDPnumerics} starts by providing a comparison of the stability and recovery guarantees of the convex formulation against traditional  approaches such as nuclear norm minimization, nonlinear propagation, and ridge regression. 

Sections~\ref{scalableNumericalImp} through ~\ref{subsamplingAffCons} discuss scalable numerical schemes. Simply listing the moments up to order 4 has complexity $\mathcal{O}(N^4)$ where $N = m+n$, hence is not a scalable representation of the moments matrix. The traditional remedy is the factorized gradient approach due to Burer and Monteiro~\cite{burer,burer2005local}, but our first numerical observation will not be a surprise to the specialist: difficult instances of matrix completion lead to the presence of spurious local minimizers. With adequate compression of the variables and constraints, and provided convergence is to the global minimizer, we show how the problem can be solved in an empirical $\mathcal{O}(N^2)$ complexity. 

The conclusions of section~\ref{numericalMethodsSec} can be summarized as follows.

\begin{itemize}
\item Factorization approaches sometimes introduce spurious minimizers for sufficiently difficult (small $\delta$) problems. When convergence to such minimizers occur, it is sometimes possible to add an additional rounding step and to extract the solution from the second order block rather than considering the whole matrix.

\item Factorizing the moment matrix in low rank form still has storage complexity $\mathcal{O}(N^2)$, hence is not fully scalable. We propose to instead view the moment matrix as a tensor, and upgrade to a more efficient hierarchical low-rank factorization with storage complexity $\mathcal{O}(N)$. This factorization seems to always work when the simpler factorized gradient works. 

\item The hierarchical factorization is in itself not sufficient to guarantee scalability, as formulation~\eqref{momentProblem}, and in particular total symmetry, still requires encoding a combinatorial ($\mathcal{O}(N^4)$) number of constraints. Section~\ref{traceContraction} then introduces three different trace relations, which are derived from the third and fourth order total symmetry constraints. Enforcing those relations in place of the original total symmetry constraints reduces the computational cost required to enforce these constraints from $\mathcal{O}(N^4)$ to $\mathcal{O}(1)$ in the best case. This compression of the total symmetry constraint thus reduces the global complexity to a factor $\mathcal{O}(N^3)$. Empirically, we again observe that those trace relations can be used as a substitute for the more expensive total symmetry constraints as soon as the traditional factorized gradient method works. 


\item Given the $\mathcal{O}(1)$ trace relations and the hierarchical low rank factorization of the moments tensor, a last bottleneck that prevent reducing the global computational cost from $\mathcal{O}(N^3)$ to $\mathcal{O}(N^2)$ is given by the Higher Order Affine constraints which enforce the moments constraints derived from multiplying any of the original constraint by any monomial of degree at most two, to be satisfied. Encoding those constraints requires storing matrices of size $\mathcal{O}(N^3)$ (i.e product of $\bs b\in \mathbb{R}^{N}$) by $\bs m_0\in \mathbb{R}^{N^2}$. We propose to encode these constraints through random sampling, minimizing over distinct batches of size $\mathcal{O}(N)$ iteratively. Such formulation does not seem to modify the convergence properties and enables us to apply the semidefinite program~\eqref{momentProblem} to matrices $\bs X$ of sizes up to $100\times 100$ without making use of the reduced sparsity based formulation of corollary~\ref{noisycaseCorollary}. Dealing with such matrices is not practical in the original framework of the Lasserre hierarchy with two rounds of lifting.

\end{itemize}

\subsection{\label{propagationVSSDPnumerics}Lipschitz stability}

To illustrate how the noise can affect a nonlinear reconstruction in the propagation framework, we conduct the following experiments. We consider a noise vector $\bs \varepsilon =\gamma  \bs n/\| \bs n\|$ for $ \bs n\sim \mathcal{N}(\bs 0,\bs I_3)$. We gradually increase the amplitude $\gamma$ of the noise vector. For those noise vectors, we let $\bs M_P$ denote the solution obtained through propagation and $\bs M_L$ the solution obtained through the stable semidefinite relaxation~\eqref{stableSDPnoisy}. We consider the matrix of example~\eqref{matrixEpsilon} for which we let $\delta=.01$. The numerical experiments are then repeated as follows.

\begin{itemize}	
\item We randomly draw the noise vector $ n\sim \mathcal{N}(\bs 0,\bs I_{3})$. 
\item The noise vector is multiplied by the scaling coefficient $\gamma$ taking values between $.001$ and $.01$, so that the corruption is at most $100\%$ of the signal. The noise is added to the entries $(X_0)_{11}$, $(X_0)_{22}$ and $(X_0)_{21}$ of $\bs X_0$ to define the (noisy) measurements.
\item  Our semidefinite programming relaxation is then solved with \textsc{cvx}\footnote{\texttt{http://cvxr.com/about/}} for the noisy measurements. We compute the difference between the returned solution $\bs M_L$ and the optimal solution to the noiseless problem $\bs M_0$ through the Frobenius norm as $\|\bs M_L - \bs M_0\|_F/\|\bs M_0\|_F$.
\item The equivalent solution obtained through nonlinear propagation is computed and compared to $M_0$ as $\|\bs M_P -  \bs M_0\|_F/\|\bs M_0\|_F$.

\end{itemize}
Those various steps are repeated for the various noise levels and for a collection of random vectors $ n$. Note that, because we consider example~\eqref{matrixEpsilon}, nuclear norm fails even in the absence of noise. For each choice of $\gamma$ the relative errors $\|\bs M_P - \bs M_0\|/\|\bs M_0\|$ and $\|\bs M_L - \bs M_0\|_F/\| \bs M_0\|_F$ are averaged over all the noise vectors. The results  are shown in Fig.~\ref{MCvsProp}. This figure thus illustrates the evolution  of the averaged relative errors $\mathbb{E}_{\bs n}\|\bs M_L - \bs M_0\|/\|\bs M_0\|$ and $\mathbb{E}_{\bs n}\| \bs M_P - \bs M_0\|_F /\|\bs M_0\| _F$ for our semidefinite programming relaxation, as well as for nonlinear propagation in an instance where nuclear norm minimization fails. The Figure on the Right is truncated above to enable the comparison between both figures. The relative errors corresponding to low signal to noise ratio (SNR) were otherwise rising above $200\%$. The SNR is measured in [dB] as $20\log(\varepsilon/\gamma)$.

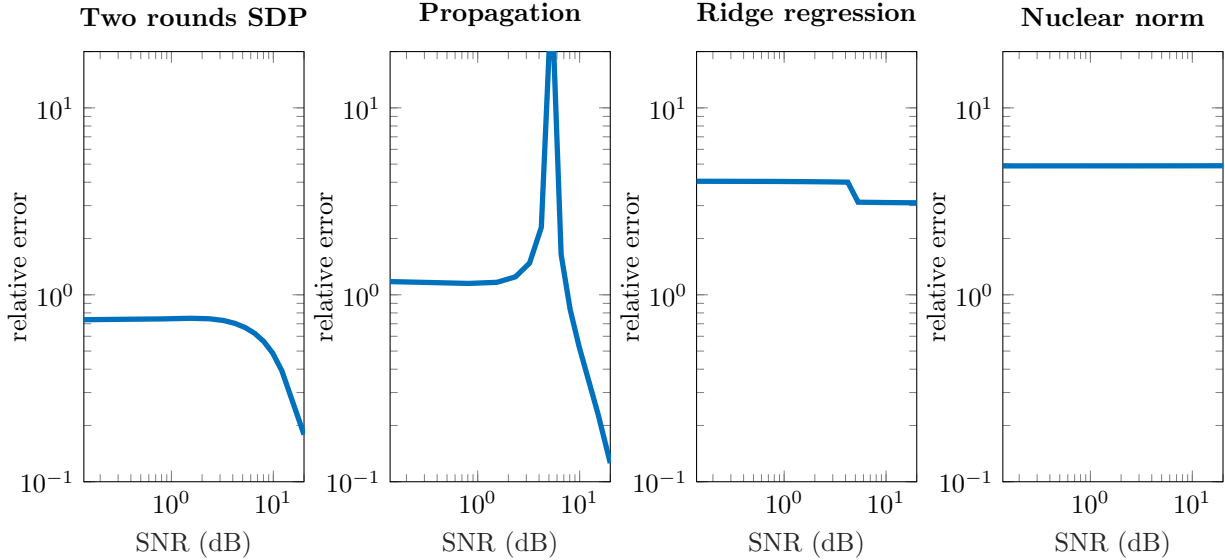
\begin{figure}[h!]\centering
%
%
\definecolor{mycolor1}{rgb}{0.00000,0.44700,0.74100}%
\begin{tikzpicture}

\begin{axis}[%
width=2.881in,
height=5.633in,
at={(1.393in,0.731in)},
scale only axis,
xmode=log,
xmin=0.138239888326602,
xmax=20,
xminorticks=true,
xlabel style={font=\color{white!15!black}},
xlabel={SNR (dB)},
ymode=log,
ymin=0.1,
ymax=20,
yminorticks=true,
ylabel style={font=\color{white!15!black}},
ylabel={relative error},
axis background/.style={fill=white},
title style={font=\bfseries},
title={Two rounds SDP},scale=.4,ylabel near ticks, y label style={at={(-0.22,0.5)}},
]
\addplot [color=mycolor1, line width=2.0pt, forget plot]
  table[row sep=crcr]{%
20	0.178740851921305\\
15.2047932214988	0.275005530382252\\
12.1331148603422	0.391960385082545\\
9.86847531884124	0.487115679809755\\
8.07384675122258	0.562013721797218\\
6.58727188615832	0.62076817795617\\
5.31832752495314	0.667030272912008\\
4.21135478413335	0.703061941519455\\
3.2296461059413	0.729168965762864\\
2.34771197435708	0.744567567795422\\
1.54712953264755	0.748805685280375\\
0.814149956480673	0.743231387112018\\
0.138239888326602	0.735849539798331\\
-0.488849129353197	0.752509280404121\\
-1.07369717925553	0.782831882427249\\
-1.62163762774118	0.775507847677809\\
-2.13705345290966	0.784628751275199\\
-2.62359044756931	0.799986622312457\\
-3.08431379843153	0.816056769560004\\
-3.52182518111362	0.830756125261068\\
};
\end{axis}

\begin{axis}[%
width=2.881in,
height=5.633in,
at={(2.996in,0.731in)},
scale only axis,
xmode=log,
xmin=0.138239888326602,
xmax=20,
xminorticks=true,
xlabel style={font=\color{white!15!black}},
xlabel={SNR (dB)},
ymode=log,
ymin=0.1,
ymax=20,
yminorticks=true,
ylabel style={font=\color{white!15!black}},
ylabel={relative error},
axis background/.style={fill=white},
title style={font=\bfseries},
title={Propagation}
,scale=.4,ylabel near ticks, y label style={at={(-.22,0.5)}}
]
\addplot [color=mycolor1, line width=2.0pt, forget plot]
  table[row sep=crcr]{%
20	0.12579930703916\\
15.2047932214988	0.229030681714786\\
12.1331148603422	0.354923198099704\\
9.86847531884124	0.530798001438384\\
8.07384675122258	0.835001935102578\\
6.58727188615832	1.64316851665472\\
5.31832752495314	41.0843877463453\\
4.21135478413335	2.29551601133221\\
3.2296461059413	1.47775351057757\\
2.34771197435708	1.2479709097181\\
1.54712953264755	1.16629751287035\\
0.814149956480673	1.15031033911491\\
0.138239888326602	1.17672437891902\\
-0.488849129353197	1.24159905768106\\
-1.07369717925553	1.35342018064814\\
-1.62163762774118	1.53793484528214\\
-2.13705345290966	1.86180476139394\\
-2.62359044756931	2.53486576986809\\
-3.08431379843153	4.67988241717611\\
-3.52182518111362	51.9710778099863\\
};
\end{axis}

\begin{axis}[%
width=2.881in,
height=5.633in,
at={(4.6in,0.731in)},
scale only axis,
xmode=log,
xmin=0.138239888326602,
xmax=20,
xminorticks=true,
xlabel style={font=\color{white!15!black}},
xlabel={SNR (dB)},
ymode=log,
ymin=0.1,
ymax=20,
yminorticks=true,
ylabel style={font=\color{white!15!black}},
ylabel={relative error},
axis background/.style={fill=white},
title style={font=\bfseries},
title={Ridge regression}
,scale=.4, ylabel near ticks, y label style={at={(-0.22,0.5)}},
]
\addplot [color=mycolor1, line width=2.0pt, forget plot]
  table[row sep=crcr]{%
20	3.10599270394253\\
15.2047932214988	3.10872781324128\\
12.1331148603422	3.11185963866312\\
9.86847531884124	3.11541467199597\\
8.07384675122258	3.11928454876823\\
6.58727188615832	3.1235705721347\\
5.31832752495314	3.12821233410445\\
4.21135478413335	4.00341837035117\\
3.2296461059413	4.01247071088839\\
2.34771197435708	4.02155968234711\\
1.54712953264755	4.0306649133933\\
0.814149956480673	4.03980761769196\\
0.138239888326602	4.04895419538938\\
-0.488849129353197	4.05811578470394\\
-1.07369717925553	4.06727437138396\\
-1.62163762774118	4.07642745848094\\
-2.13705345290966	4.08556315079389\\
-2.62359044756931	4.09466263774661\\
-3.08431379843153	4.10375716204813\\
-3.52182518111362	4.87283576751572\\
};
\end{axis}

\begin{axis}[%
width=2.881in,
height=5.633in,
at={(6.203in,0.731in)},
scale only axis,
xmode=log,
xmin=0.138239888326602,
xmax=20,
xminorticks=true,
xlabel style={font=\color{white!15!black}},
xlabel={SNR (dB)},
ymode=log,
ymin=0.1,
ymax=20,
yminorticks=true,
ylabel style={font=\color{white!15!black}},
ylabel={relative error},
axis background/.style={fill=white},
title style={font=\bfseries},
title={Nuclear norm}
,scale=.4, ylabel near ticks, y label style={at={(-0.22,0.5)}},
]
\addplot [color=mycolor1, line width=2.0pt, forget plot]
  table[row sep=crcr]{%
20	4.9000049105361\\
15.2047932214988	4.89929746145258\\
12.1331148603422	4.89860629307985\\
9.86847531884124	4.89792571240399\\
8.07384675122258	4.89725928083358\\
6.58727188615832	4.89660529477442\\
5.31832752495314	4.89596547083114\\
4.21135478413335	4.89533957294089\\
3.2296461059413	4.89472934734964\\
2.34771197435708	4.89413324053469\\
1.54712953264755	4.89355124880793\\
0.814149956480673	4.89298147580784\\
0.138239888326602	4.8924246650713\\
-0.488849129353197	4.89188428749171\\
-1.07369717925553	4.8913569925797\\
-1.62163762774118	4.89084710233335\\
-2.13705345290966	4.89034922107522\\
-2.62359044756931	4.88986540730652\\
-3.08431379843153	4.88939503821367\\
-3.52182518111362	4.88894097802459\\
};
\end{axis}
\end{tikzpicture}%
\caption{\label{MCvsProp} Evolution of the relative error $\|\bm M-\bm M_0\|_F/\|\bm M_0\|_F$ as a function of the noise level (SNR [dB]) for the semidefinite program; nonlinear propagation; ridge regression; and nuclear norm minimization. Blowup can occur for nonlinear propagation whenever the noise takes on values that are close, yet opposite in sign to the small entries in the matrix.
Both ridge regression and nuclear norm minimization are known to fail, even in the absence of noise.}
\end{figure}

\subsection{\label{scalableNumericalImp}Toward scalability: low-rank factorization}

Despite its interest in terms of stability, the semidefinite program~\eqref{noisy} remains difficult to implement for practical problems because of the size of the second order moments matrix involved. Solving the completion problem on a matrix of size $N\times N$ through~\eqref{noisy} requires storing a matrix of size $N^4$ which is often out of reach for typical numerical solvers, on sufficiently interesting instances. In this section we introduce and discuss more scalable numerical methods based on low rank factorizations of the moment matrix~\eqref{secondRound}.  As is usual in semidefinite programming, the recovery guarantees are however lost when passing to such formulations. This phenomenon is illustrated by Fig~\ref{landscapeStudy1} to~\ref{landscapeStudy4}. 


\begin{figure}\centering
\input{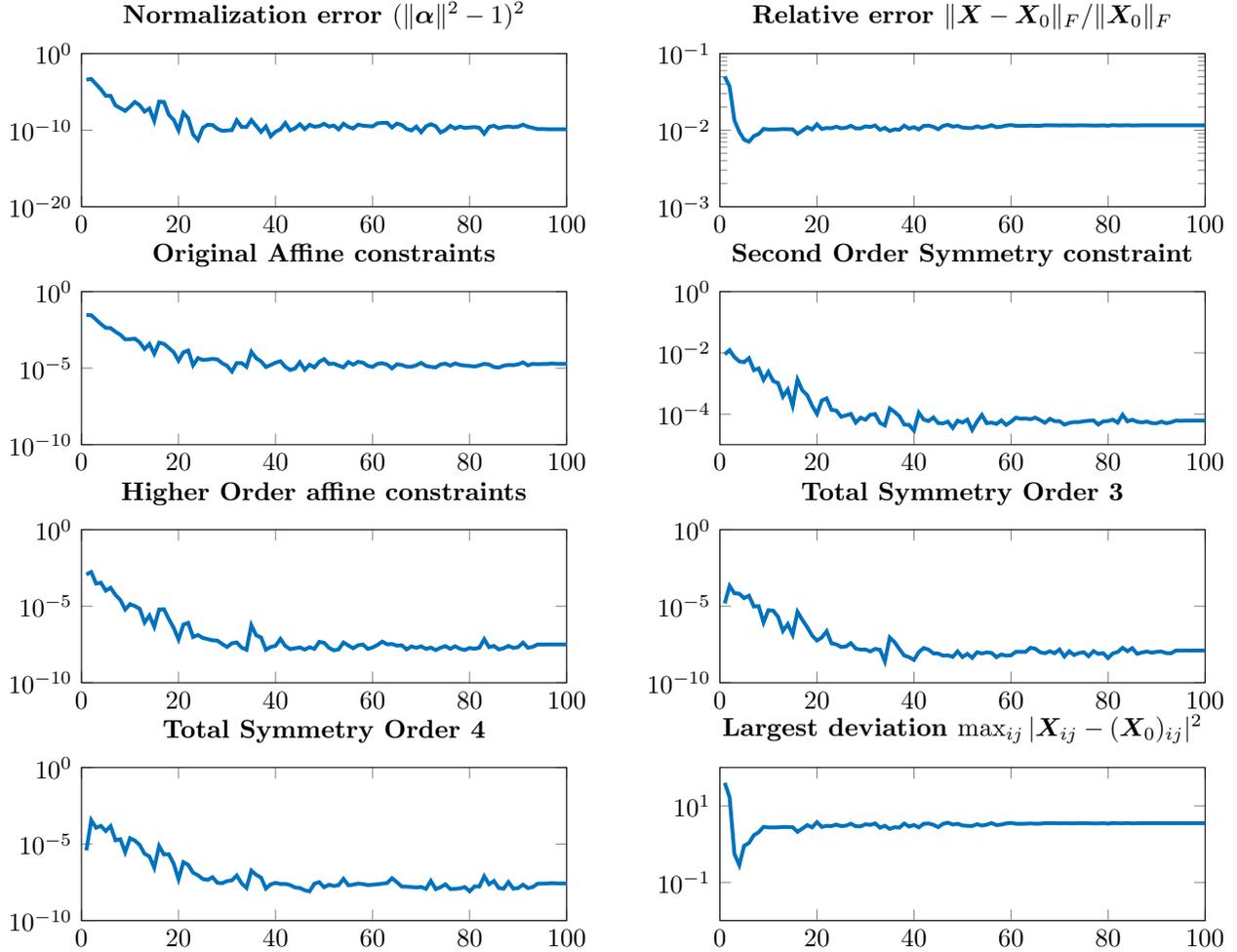}
\caption{\label{landscapeStudy1}Evolution of each of the error terms appearing in the augmented Lagrangian formulation~\eqref{augmentedLagrangian} on the benchmark problem~\eqref{matrixEpsilon} with $\delta = 0.3$. The relative error (top right) together with the largest deviation highlight the convergence to a local minimizer, while all the other error terms, certifying feasibility, have already reached small thresholds. Those plots can also be compared to the evolution of the trace and global misfit shown in Fig.~\ref{landscapeStudy2} and the comparison of the structures of the global minimizer with the returned local minimizer shown in Fig.~\ref{landscapeStudy4}.}  
\end{figure}

\begin{figure}\centering
\input{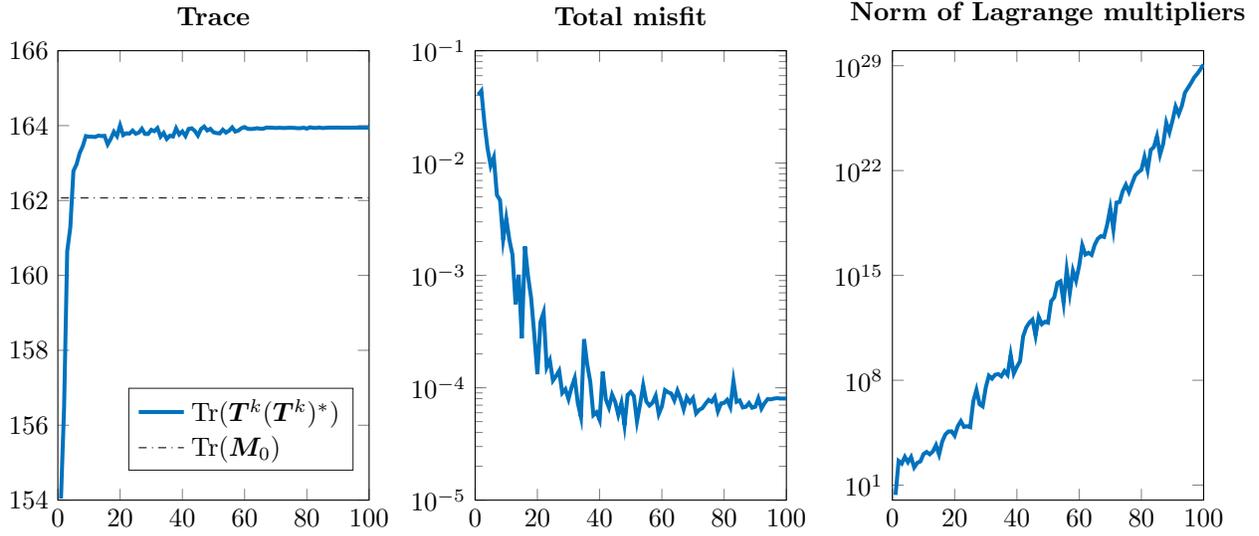}
\caption{\label{landscapeStudy2}Evolution of the Trace, misfit and norm of the dual Lagrange multipliers. The slow blowup in the norm of the those multipliers confirms that we leave the regime in which Theorem 5.4 in~\cite{burer2005local} works, and thus loses the recovery guarantees for the factorized gradient formulation that are following from this theorem.}  
\end{figure}

\begin{figure}\centering
\includegraphics[width=\linewidth, trim = 3cm 4cm 2cm 4cm, clip=true]{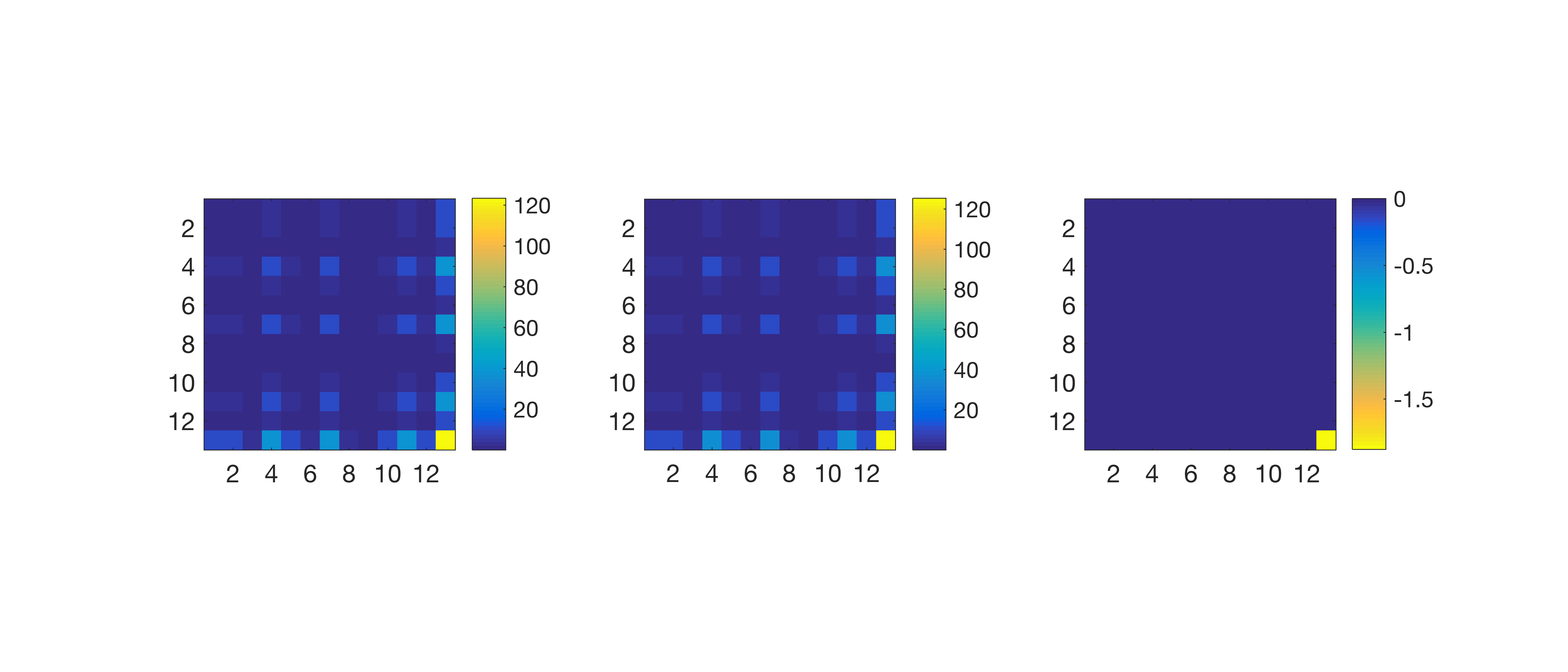}
\caption{\label{landscapeStudy4} 
Despite the existence of local minima for sufficiently difficult formulations (small $\delta$), the solutions returned by the minimization of the factorized augmented Lagrangian, or hierarchical low rank Lagrangian sometimes remain relatively close to the global solution except for the highest order moments. This figures illustrates this phenomenon. The exact moments matrix $\bs M_0$ for the simple example of~\eqref{matrixEpsilon} with $\delta=0.3$ is shown on the left. The moments matrix $\bs M$ recovered through low rank factorization is shown in the center and the difference $\bs M-\bs M_0$ is shown on the right. In practice, for a problem that is not too difficult (i.e. a sufficiently large value of $\delta$ in~\eqref{matrixEpsilon}) it is thus empirically possible to recover the global solution by simply extracting the lower order moments. }
\end{figure}

Among the most popular approaches of the last few years, one the of the most efficient, popularized by~\cite{burer2003nonlinear} encodes the unknown positive semidefinite matrix $\bs M$ from~\eqref{momentProblem} as a low rank factorization $\bs M \approx \bs T\bs T^*$, with $\bs T$ of size $N + N^2$ by $r$ for small $r$, and then minimizes the augmented Lagrangian over the factor $\bs T$. Note that in our case $\bs T$ is of the form $\bs T =[\bs \alpha^{(k)}, \bs R^{(k)}, \bs \Pi^{(k)}]$ where $\bs \alpha^{(k)}\in \mathbb{R}^r$ encodes the normalizing constant, $\bs R^{(k)}\in \mathbb{R}^{n\times r}$ and $\bs \Pi\in \mathbb{R}^{n^2\times r}$. We further let $\bs \Pi^{(k)} = [\bs \Pi^{(k)}_1,\ldots \bs \Pi^{(k)}_r]$ where we use $\bs \Pi^{(k)}_j \in \mathbb{R}^{N^2}$ to denote each of the full rank matrices of size $N\times N$ making up the factors in the low rank factorization of the matricization of the fourther order moments tensor. The moments matrix $\bs M$ then reads,
\begin{align}
\bs M = \left[\begin{array}{c}
\bs \alpha\\
\bs R\\
\bs \Pi
\end{array}\right]\left[\begin{array}{ccc}
\bs \alpha^T,\quad 
\bs R^T, \quad \bs \Pi^T
\end{array}\right] = \bs T\bs T^T,\label{LowrankEncoding}
\end{align}
In the factorization above, $\bs X$ and $\bs X^T$ are meant to appear as off-diagonal blocks of $\bs R\bs R^T$. The rank of each of the factors can be constrained, and is increased, when reaching local minimizers. If we let $r$ denote the rank of the compressed matrix $M$, such a formulation thus results in a reduction of the number of unknown from $\mathcal{O}(N^2)$ to only $\mathcal{O}(Nr)$ unknowns. For a set of constraints defined as $\langle \bs A_i, \bs X\rangle = b_i $ and encoded in the linear map $\mathcal{A}: \bs X\mapsto \mathcal{A}(\bs X) = \{\langle \bs A_i, \bs X\rangle \}_{i=1}^m$, a vector of multipliers $\lambda \in \mathbb{R}^m$ and penalty term $\sigma\in \mathbb{R}$, the augmented Lagrangian function corresponding to a minimization of the trace under the linear constraints $\mathcal{A}(\bs T\bs T^*) = b$ reads 
\begin{align}
\mathcal{L}(\bs T,\lambda,\sigma) = \|\bs T\|_F^2 -2 \sum_{i=1}^m \lambda_i \left(\langle \bs A_i, \bs T\bs T^*\rangle - b_i\right) + \sigma \sum_{i=1}^m|\langle \bs A_i,\bs T\bs T^*\rangle - b_i|^2  \label{augmentedLagrangian}
\end{align}
For some initial guess $\bs T^{(0)}$, we let $v_0$ be initialized as $v_0 = \rho(\bs T^{(0)}(\bs T^{(0)})^*) =\sum_{i=1}^m (\langle \bs A_i, \bs T^{(0)}(\bs T^{(0)})^T\rangle - b_i )^2$. Finally set $k$ to $0$. The augmented Lagrangian algorithm iteratively minimizes the Lagrangian over $\bs T$ (step 1) and updates the multipliers (step 2) according to the following rule (see~\cite{burer2003nonlinear}). Let $\rho$ denote the norm of the vector of residuals following from step 1, $\rho :=\sum_{i=1}^m (\langle \bs A_i, \bs T^{(k)}(\bs T^{(k)})^T\rangle - b_i )^2$. We set $\gamma = 2$ (when dealing with more difficult cases, this update parameter should be increased) and $\eta = 0.25$. If $\rho < \eta v_k$, $y^{k+1} \leftarrow y^k - \sigma^k (\mathcal{A}(\bs T\bs T^*) - b)$, $\sigma^{k+1} \leftarrow \sigma^k$, and $v_{k+1}\leftarrow \rho$. Otherwise, $y^{k+1} \leftarrow y^k$, $\sigma^{k+1} \leftarrow\gamma \sigma^k$, $v_{k+1}\leftarrow v_k$. Finally set $k \leftarrow k+1$ and repeat step 1.

When dealing with problems~\eqref{momentProblem} and~\eqref{noisy} we should favor penalty formulations over Lagrangian formulations, as the number of symmetry constraints is combinatorial in the dimension and therefore requires large vectors of multipliers. The convex formulation~\eqref{stableSDPnoiseless} then turns into
\begin{align}\begin{split}
\min \quad & \|\bs T\|^2_F + \sigma \left\| \mathcal{A}(\bs T \bs T^*) - \bs b\right\|^2
\end{split}\label{lowRankFormulationPenaltyMethod}
\end{align}
%




In difficult cases (e.g., when $\delta$ is sufficiently small), convergence of iterative methods can suffer for~\eqref{lowRankFormulationPenaltyMethod} and the Lagrangian formulation is thus more appropriate. A hybrid formulation, intermediate between the penalty formulation~\eqref{lowRankFormulationPenaltyMethod} and the more expensive Lagrangian~\eqref{augmentedLagrangian} is to consider an incomplete set of Lagrange multipliers. In the rest of this section, we will focus on making formulation~\ref{augmentedLagrangian} more tractable.

\subsection{\label{hierarchicalLR}From low rank to hierarchical low rank}
When considering large matrices, such as used by the Lasserre hierarchy, even wen using rank constrained factorization, an optimization framework such as~\eqref{augmentedLagrangian} with the factorization~\eqref{LowrankEncoding} still requires storing $\mathcal{O}(N^2r)$ unknowns. It is however possible to factorize in low-rank form the higher order blocks in~\eqref{secondRound}. This leads to a multi-level or \textit{hierarchical encoding} of the moments matrix underlying the Lasserre/sos hierarchies. Within the completion framework, it means that each of the factors $\bs \Pi^{(k)}_\ell$, $\ell=1,\ldots, r_1$ can be encoded as a symmetric low rank factorization. This idea is known as hierarchical Tucker decomposition in tensor analysis.

The hierarchical factorization thus relies on two dynamic ranks. The first rank $r_1$ controls the factorization of the moments matrix as a whole. The second rank, $r_2$ controls the factorization of the fourth order tensors $\bs \Pi_{\ell}$ which are thus stored as the tuples $\{\bs S_{k,\ell}\}_{(k,\ell)\in [r_2]\times[r_1]}$, i.e., 
\begin{align}
\bs \Pi_{\ell} = \sum_{k=1}^{r_2} \bs S_{k,\ell}\bs S_{k,\ell}^T,\qquad \ell=1,\ldots, r_1.\label{hierarchicalEncoding}
\end{align}
Optimization is then performed on the augmented Lagrangian obtained by substituting this nested low rank factorization. The power of the hierarchical low-rank idea lies in its scalability, and the fact that it can be applied recursively to higher-degree moment matrices, thus potentially enabling scalable optimization over higher rounds of semidefinite programming hierarchies. Function and gradient derivation are given for the hierarchical factorization on the penalty formulation~\eqref{lowRankFormulationPenaltyMethod} in appendix~\ref{gradientFunctionCalculation}. The derivations on the augmented Lagrangian formulation follow the exact same idea. The next section discusses how the combinatorial total symmetry constraints can be enforced efficiently.

\subsection{\label{traceContraction}Replacing total symmetry with trace relations}

In this section, we discuss three trace relations whose linearizations can be used as scalable substitutes to the more computationally expensive third and fourth order total symmetry constraints. We provide numerical evidence that whenever the factorized gradient method works, enforcing those trace relations in place of total symmetry works just as well, yet reduces the computational (combinatorial) cost of those constraints from $\mathcal{O}(N^4)$ to $\mathcal{O}(1)$. Those relations seem to work best when applied to the multilevel low rank decomposition introduced in section~\ref{hierarchicalLR}. 

The total symmetry constraints are used to encode correspondence of the entries of $\bs M$ that correspond to the same moments (see the discussion in section~\ref{introductionToAlgo}). When applied on the third and fourth moments tensors, those constraints enforce equality between any permutation of the multi-index. I.e if $\bs M^{(3)}$ and $\bs M^{(4)}$ encode the third and fourth order blocks in $\bs M$, then those constraints require that for any $3$-tuple $(i,j,k)$ and permutation $\pi$, $(\bs M^{(3)})_{i,j,k} = (\bs M^{(3)})_{\pi(i,j,k)}$. Similarly, on the fourth order block, for any $4$-tuple $(i,j,k,\ell)$ and any permutation $\pi$, the moments matrix must satisfy $(\bs M^{(4)})_{i,j,k\ell} = (\bs M^{(4)})_{\pi(i,j,k\ell)}$.

One of the implications of total symmetry constraints is that the contraction of any fourth order block does not depend on the indices over which this contraction is taken. In other words, the sum $\sum_{i=1}^N \bs M^{(4)}_{iijk}$ is the same as the sum $\sum_{i=1}^N \bs M^{(4)}_{jiik}$, and so is it for any of the sums $\sum_{i=1}^N \bs M^{(4)}_{\pi(iijk)}$ for any permutation operator $\pi:[N]\mapsto [N]$. When assuming that the tensor $\bs M^{(4)}$ is rank one, that is $\bs M^{(4)}_0= \mbox{vec}(\z_0 \otimes \z_0) \mbox{vec}(\z_0\otimes \z_0)^T$, those constraints can be used to derive interesting trace relations on the second order tensor $\bs M^{(2)}$. For $\bs M^{(4)} = \bs M^{(2)}\otimes \bs M^{(2)}$, $\sum_{i=1}^N \bs M^{(4)}_{iijk} = \sum_{i=1}^N \bs M^{(4)}_{jiik}$ in particular implies the following trace relation on $\bs M^{(2)}$,
\begin{align}
\tr(\bs M^{(2)})\bs M^{(2)} = (\bs M^{(2)})^2.\label{traceRelation1}
\end{align}
Linearizing this trace relation brings us back to enforcing equality of the contractions $\sum_{i=1}^N \bs M^4_{iijk} - \sum_{i=1}^N \bs M^4_{jiik} =0$. Moreover this first contraction can be enforced very efficiently on the (hierarchical) low rank factorization of $\bs M^{(4)}$, $\bs M^{(4)} = \sum_{k=1}^r \bs S_k\bs S_k$,
\begin{align}
\sum_{k=1}^{r_1} \tr(\mata(\bs \Pi_{k}))\mata(\bs \Pi_{k}) = \sum_{k=1}^r \mata(\bs \Pi_{k}) \mata(\bs \Pi_{k}) 
\end{align}
%
%
%
The natural extension to~\eqref{traceRelation1} is to go one step further and take a second contraction with respect to the indices remaining in this first constraint. This gives a second trace relation that requires the trace of the squared matrix to match the square of this matrix trace,  
\begin{align}
\tr(\bs M^{(2)})^2 = \tr((\bs M^{(2)})^2).\label{traceRelation2}
\end{align}
This last relation reduces the set of $\mathcal{O}(N^4)$ symmetry constraints to a single constraint that can be enforced efficiently on the hierarchical low rank factors. Note that when enforced on positive semidefinite matrices,~\eqref{traceRelation2} is in fact equivalent to enforcing an exact rank one constraint, as it requires $(\sum_i \lambda_i)^2 = \sum_i \lambda_i^2$, for $\lambda_i\geq 0$,
\begin{align}
\left\{X\in \mathbb{S}_N^+\;:\;\tr(\bs X)^2 = \tr(\bs X^2)\right\} =\left\{\bs X\in \mathbb{S}_N^+\;:\;\mbox{rank}(\bs X)\leq 1\right\}
\end{align}
Again, Equation~\eqref{traceRelation2} can be written compactly for the low rank as well as for the hierarchical low rank formulations. For this last factorization, we get 
\begin{align}
\sum_r \|\bs S_r\|^2_F \bs S_r\bs S_r^* &=\sum _r \bs S_r (\bs S_r^*\bs S_r)\bs S_r\\
& = \sum_r \bs S_r \bs \Delta_{r,r}\bs S_r^*
\end{align}
where we let $\bs \Delta_{r,r}$ denote the $r$ by $r$ matrix encoding the products $\bs S_r^*\bs S_r$. 

 

As the fourth order symmetry constraints are not as important as a means to express the first order monomials as the third order symmetry constraints (especially in the reduced framework of Corollary~\ref{noisycaseCorollary}), one could argue that replacing those constraints with a simpler contraction does not have a significant impact on the outcome of the relaxation. It is in fact possible to consider a trace contraction for third order total symmetry constraints as well. At order $3$, following from the constraints $\bs M^{(3)}_{ijk} = \bs M^{(3)}_{\pi_1(i,j,k)}$, one possible contraction can be taken over the first $2$ indices in the third order tensor $\bs M^{(3)}$,  for any permutation $\pi$. This gives the following relation
\begin{align}
\tr(\bs M^{(2)})\bs M^{(1)} = \bs M^{(2)}\bs M^{(1)}\label{traceRelation3}
\end{align}
where we again let $\bs M^{(2)}$ denote the matrix encoding the second order moments $\bs M^{(2)} \approx \bs M^{(1)}\otimes \bs M^{(1)}$ and $\bs M^{(1)}$ denotes the vector of first order moments. Again, this third relation can be expressed compactly for both the low rank and hierarchical low rank formulations. For this last factorization, we can write
\begin{align}
\sum_{r=1}^{r_1} \tr(\mata(\bs \Pi_r)) (\bs R_r) = \sum_{r=1}^{r_1} \mata(\bs \Pi_r)\bs R_r
\end{align}
%
In each of these examples, we provide the evolution of each of the error terms appearing in the Lagrangian~\eqref{augmentedLagrangian} with the iterations. We also represent the global misfit, the trace and the evolution of the Lagrange multipliers as the main recovery guarantees provided so far on low rank factorization require those multipliers to remain bounded. 

To study the result of replacing third and fourth order total symmetry constraints by the trace relations above, we apply those relations on the simple example~\eqref{matrixEpsilon} for $\delta = .5$ with the single low rank and hierarchical low rank factorization. The results are shown in Figs.~\ref{traceComparisonTotalLRa} (low rank) and~\ref{hierarchicalLowRankandTraceA} (hierarchical low rank). When considering the simpler factorized gradient approach, it seems that replacing the full set of $4^{th}$ order symmetry constraints with the corresponding trace contraction can lead to a slight reduction in the accuracy. The total symmetry constraints are not entirely satisfied as highlighted by Fig.~\ref{traceComparisonTotalLRa} and this results in a partial recovery of the fourth order tensor. It remains possible to extract the solution $\bs X_0$ from the second order moments. A comparison of the iterations of Fig.~\ref{traceComparisonTotalLRa} and~\ref{hierarchicalLowRankandTraceA} seems to suggest that replacing total symmetry by the relations~\eqref{traceRelation1},~\eqref{traceRelation2} and~\eqref{traceRelation3} performs best when used on the hierarchical low rank factorization.

Generally speaking, it again seems that when the factorized gradient approach converges, which typically happens on problems that are not too difficult (i.e $\delta$ not too small), it always seems possible to replace the combinatorial Total Symmetry constraints by the more tractable trace contractions on both the $3^{rd}$ and $4^{th}$ order tensors, and to recover the solution for both the low rank and hierarchical low rank frameworks. As we don't have empirical evidence that choosing contraction~\eqref{traceRelation1} over contraction~\eqref{traceRelation2} will lead to better convergence properties, we will always favor the former over the latter, as this one reduces to a single equation. On the remaing large scale examples of this paper, we thus always replace total symmetry constraints with relation~\ref{traceRelation3} (third order moments) and relation~\eqref{traceRelation2} (fourth order moments).

To illustrate the interest of the combination of a multi-level low rank decomposition and of the trace relations~\eqref{traceRelation2} and~\eqref{traceRelation3} for large rank one recovery problems, we now apply this combination on a first large scale example. On this example, both nuclear norm and ridge regression fail at recovering the solution. For this example, we take the moments to be bounded as $\delta \leq \z_0^{\bs \alpha} \leq 1$, for any $|\bs \alpha|\leq 1$ and with $\delta=.1$ (i.e with a possibly larger gap between smallest and largest entries). The bipartite graph defining the measurements is represented in Fig.~\ref{bipartiteGraphTikhonovFail}. This graph is generated at random  while enforced to span the $(m+n)$ vertices with a minimal number of edges. For this particular problem, the solution returned by nuclear norm minimization gives a relative error $\|\bs X-\bs X_0\|_F/\|\bs X_0\|_F = 0.7382$. The iterates returned by the ridge regression formulation~\eqref{quartic} are displayed in Fig.~\ref{TikhoFailure1} (relative error and data misfit. In this case there is no need for any regularization as the problem is noiseless). 

The iterations following from optimization over the hierarchical low rank augmented Lagrangian with the trace relations~\eqref{traceRelation2} and~\eqref{traceRelation3} is displayed in Fig.~\ref{LasserreSucceedsWhereTikhoFails}.

%
%
%

\begin{figure}\centering
\input{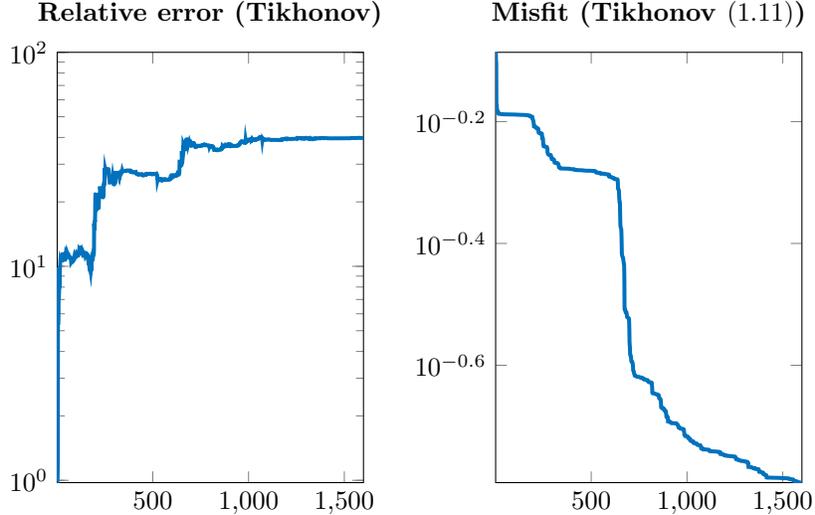}
\caption{\label{TikhoFailure1}It is possible to find instances of the rank one matrix completion problem for which even the ridge regression formulation will not be able to return the global minimizer. Such instances are more frequent when considering matrices of large size. In this particular case, ridge regression is clearly shown to converge to the wrong minimizer. The higher semidefinite relation, on the other hand, returns the true solution, even when considering a hierarchical low rank factorization with strong constraint on the rank ($2$ in this case) (see Fig.~\ref{LasserreSucceedsWhereTikhoFails}). The matrix considered here is $20\times 20$ with moments bounded as $\delta \leq (X_0)_{ij}\leq 1$ for $\delta = 0.01$ and a mask $\Omega$ whose underlying bipartite graph is shown in Fig.~\ref{bipartiteGraphTikhonovFail}. The corresponding rank constrained iterations for the higher order relaxation are displayed in Fig.~\ref{LasserreSucceedsWhereTikhoFails}.}
\end{figure}

\begin{figure}\centering
\input{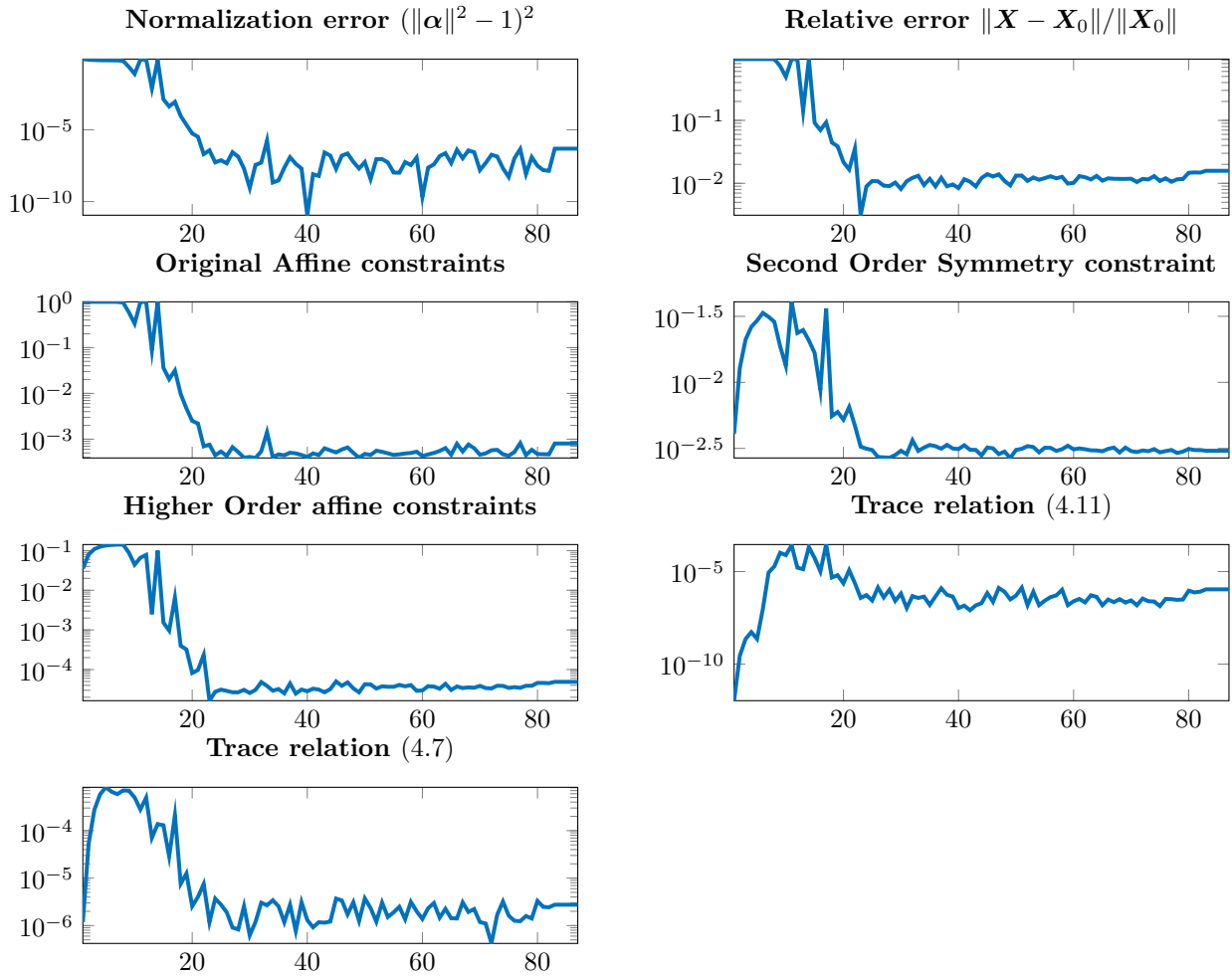}
\caption{\label{LasserreSucceedsWhereTikhoFails}Evolution of the various constraints and error terms appearing in the augmented Lagrangian, for the hierarchical low rank factorization (here $r_1$ and $r_2$ are set to $4$) with Trace contractions~\eqref{traceRelation2} and~\eqref{traceRelation3} for the $20\times 20$ example of Fig.~\ref{TikhoFailure1} with measurements defined from the bipartite graph of Fig~\ref{bipartiteGraphTikhonovFail}.}
\end{figure}

\begin{figure}\centering
\includegraphics[width=.7\linewidth, trim=0cm 3cm 0cm 3cm,clip=true]{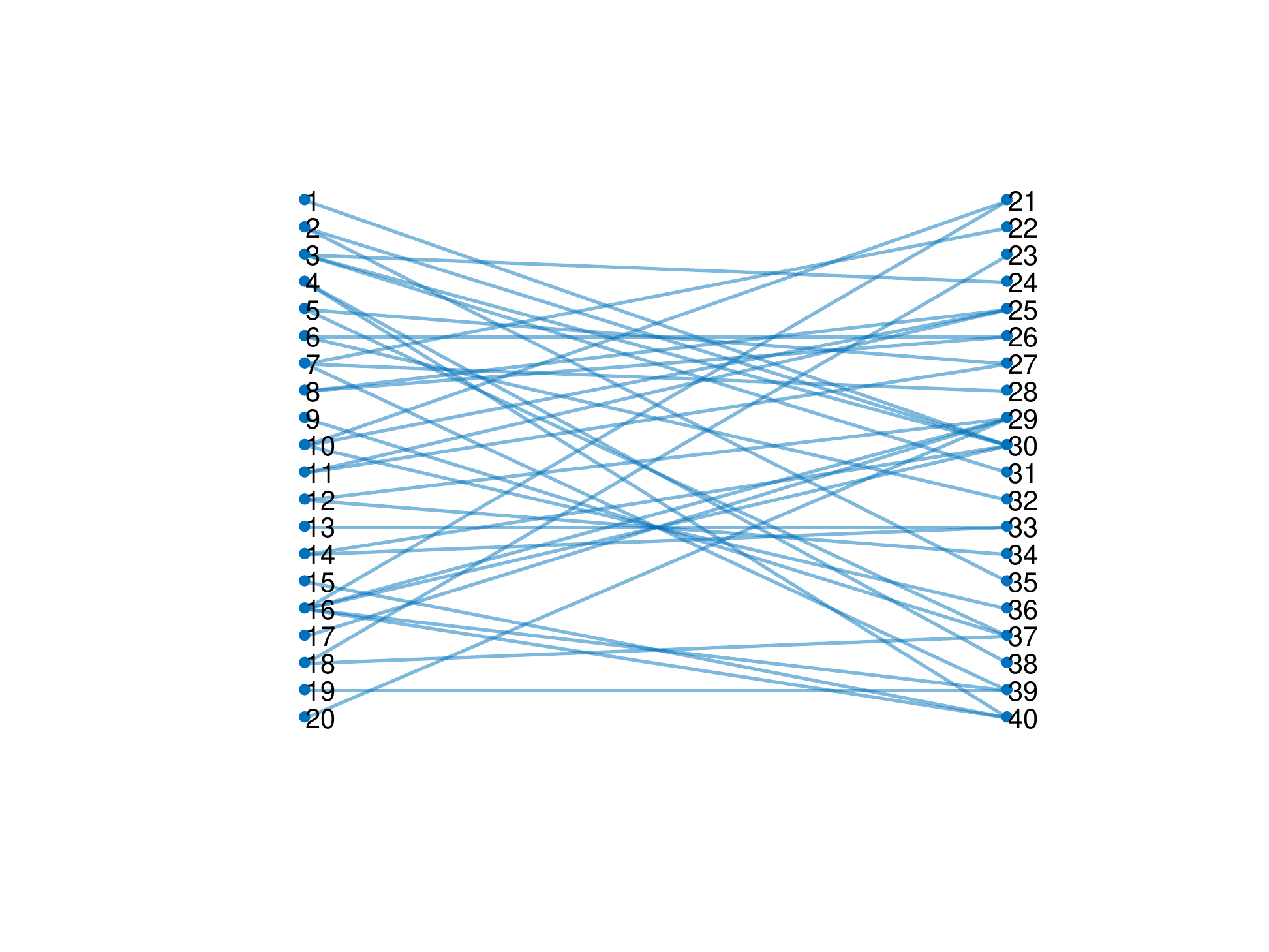}
\caption{\label{bipartiteGraphTikhonovFail}Bipartite graph corresponding to the $20\times 20$ example of Figs~\ref{TikhoFailure1} and~\ref{LasserreSucceedsWhereTikhoFails} used to illustrate the failure of nuclear norm minimization and ridge regression.}
\end{figure}

%
%
%


\begin{figure}
\input{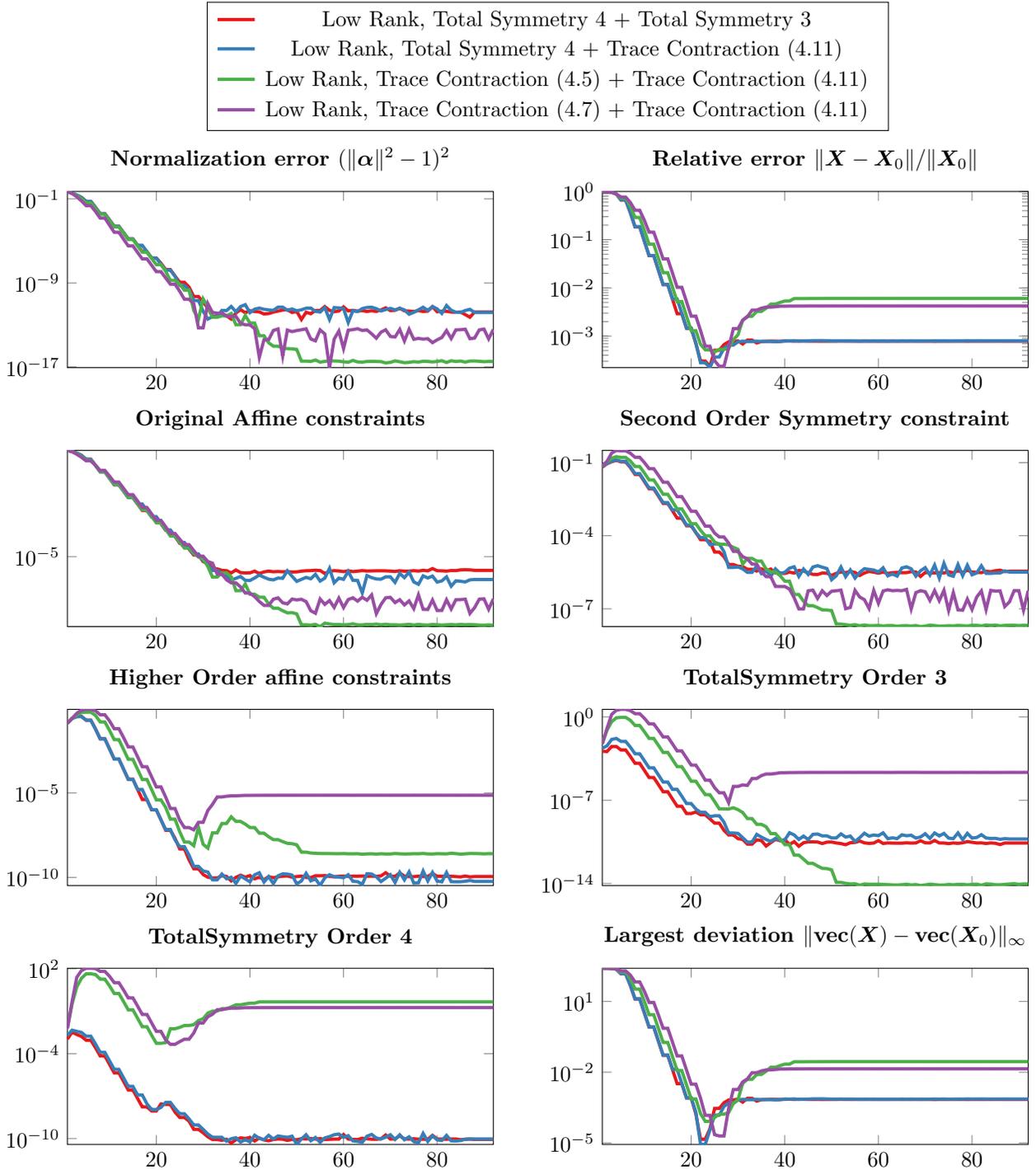}
\caption{\label{traceComparisonTotalLRa}Comparison of the various Trace contractions~\eqref{traceRelation1},~\eqref{traceRelation2} and~\eqref{traceRelation3} on the simple low rank decomposition, as substitute for the combinatorial total symmetry constraints. As shown by the evolution of the relative error, all $4$ approaches lead to estimates that are very close to the optimal solution $M_0$. Adding some of the $4^{th}$ order Total symmetry might help improving the estimate although the solution returned by the $4^{th}$ order trace contraction already are sufficiently close to the ground truth to recover the solution.
}
\end{figure}

\begin{figure}\centering
\input{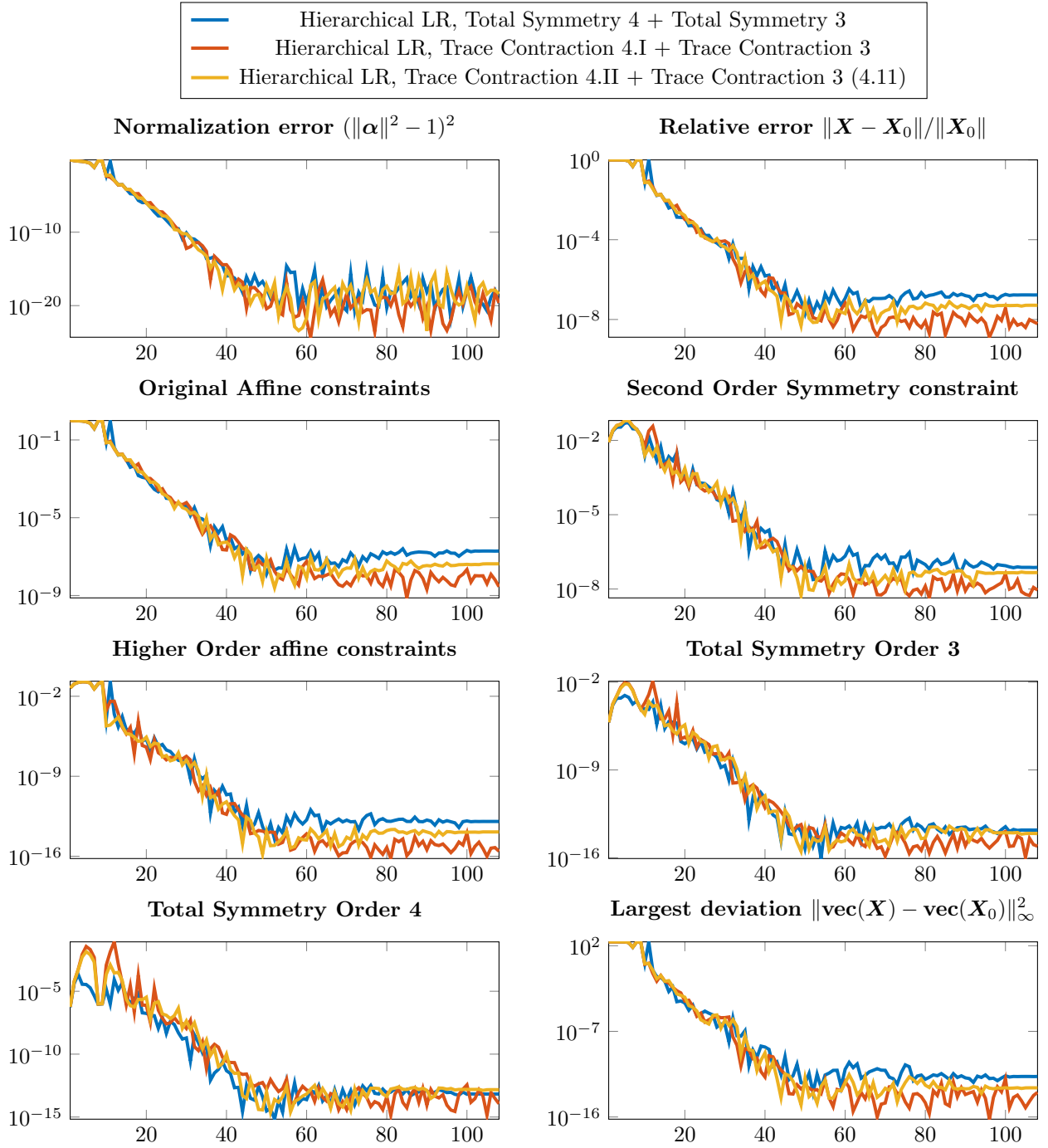}
\caption{\label{hierarchicalLowRankandTraceA}Minimization of the augmented Lagrangian, on the hierarchical low rank factorization (here we consider ranks $4$ and $2$) for various combinations of the total symmetry constraints and trace relations. Note that total symmetry and trace contractions are never imposed simultaneously, i.e., either we impose total symmetry or the corresponding trace contraction. The evolution of the total symmetry constraints shows that on the hierarchical factorization, enforcing the trace relations of section~\ref{traceContraction} is sometimes exactly equivalent to requiring total symmetry by means of the combinatorial constraints.}
\end{figure}

\subsection{\label{subsamplingAffCons}Subsampling the higher order affine constraints}

A last computational bottleneck that hinders the application of the hierarchical formulation of section~\eqref{hierarchicalLR} to larger matrices comes from the higher-order affine constraints. Those constraints have the form $\bs A\bs T\bs T^* = 0$ where $\bs T$ denotes the whole low rank factor of size $\mathcal{O}(N^2)$ and $\bs A$ simply applies the affine constraints to the second order part of $\bs T$, columnwise. To further reduce the computational cost, we propose to draw smaller $\mathcal{O}(N)$ "batches" of moments $\mathcal{S}_i\subseteq [N]\times [N]$ from the full set of second order moments. We then minimize the resulting reduced augmented Lagrangian functions defined from each $\mathcal{S}_i$ sequentially. Let $\bs T_0$ denote the intial iterate chosen at random. The procedure can be summarized as follows

\begin{enumerate}
\item Randomly select a subset $\mathcal{S}_i$ of size $\mathcal{O}(N)$, without replacement, from the set of all second order moments $[N]\times[N]$.
\item Minimize the augmented Lagrangian~\eqref{augmentedLagrangian}, considering only the higher order affine constraints of the form $\bs A\bs T \bs T_{\mathcal{S}_i}^* = 0$, where $\bs T_{\mathcal{S}_i} \equiv [\bs \alpha^*, \bs R^*, \bs \Pi_{\mathcal{S}_i}^*]$ resulting from the second order moments appearing in $\mathcal{S}_i$.
\item Let $\bs T^\sharp$ denote the solution resulting from step 2. If $\|\mathcal{A}(\bs T^{\sharp}(\bs T^{\sharp})^*) - \bs b\|$ is sufficiently small then stop. Otherwise, repeat step 1 with $\bs T_0 \leftarrow \bs T^\sharp$.
\end{enumerate}

To illustrate this last algorithm, we provide numerical experiments on a $100\times 100$ matrix. For the algorithm to be fully efficient, we combine the trace relations~\eqref{traceRelation2} (on the fourth order block) and~\eqref{traceRelation3} (on the third order block), and take advantage of the subsampling scheme discussed above. On a $100\times 100$ matrix, the factorized gradient method would require storing matrices of size at least $\mathcal{O}(N^3) = 8e6$, thus leading to poor performace in terms of runtime. On convex solvers such as {\sc cvx} or {\sc glotipoly}, this example would require storing matrices of size $\mathcal{O}(N^4)$ ($\mathcal{O}(N^3)$ in the reduced formulation of corollary~\eqref{noisycaseCorollary}). The iterations on this example are displayed in Fig.~\ref{hugeExample2} and the corresponding bipartite graph used as mask is shown in Fig.~\ref{hugeExampleBipartite}. Solving this problem takes no more than $10$ mins on a laptop with $2$ GHz Intel Core i5. 

\begin{figure}\centering
\input{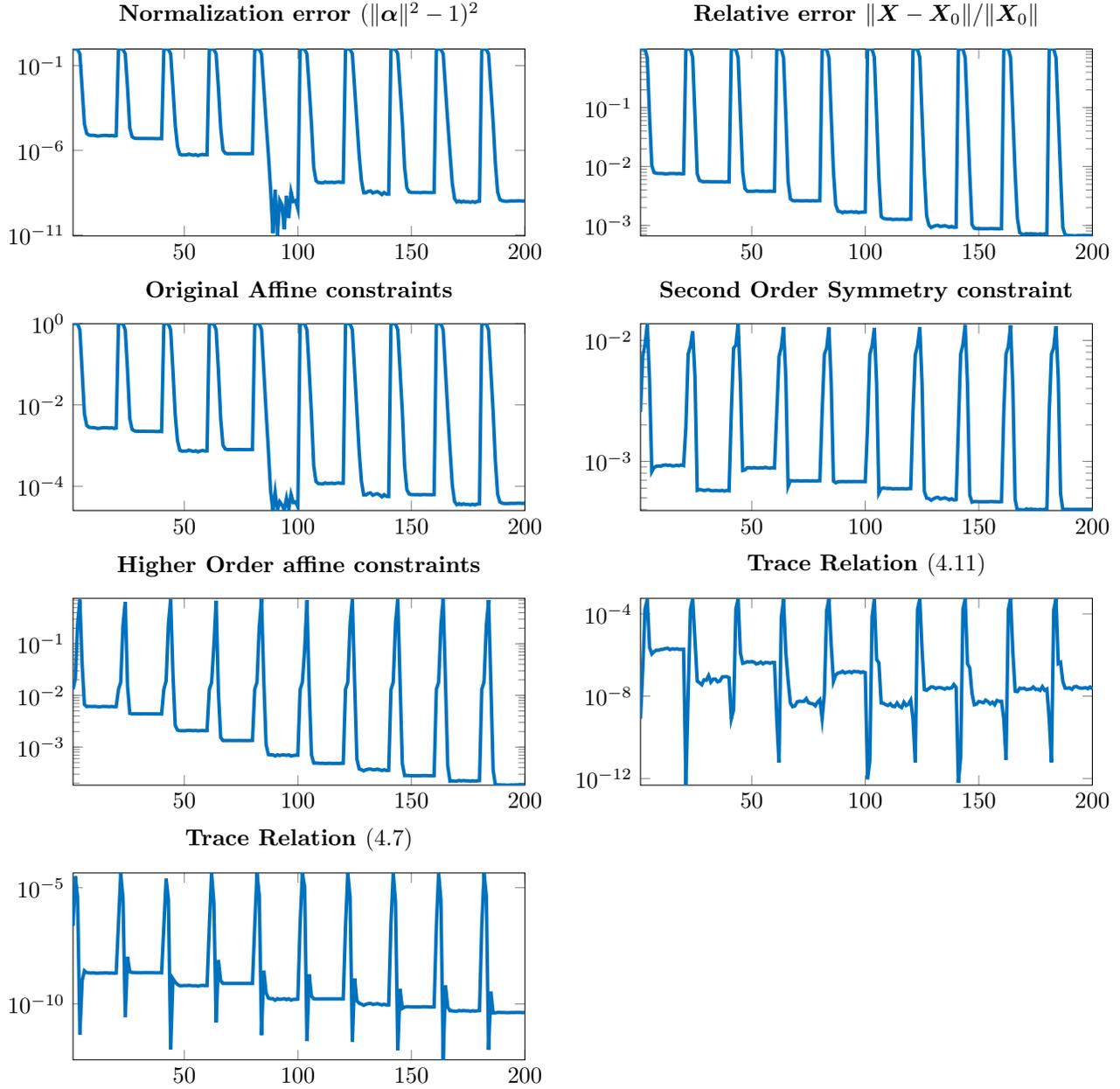}
\caption{\label{hugeExample2} When considering sufficiently large (e.g. $100\times 100$) completion problems, minimizing the whole set of higher order affine constraints is not efficient anymore because those constraints require storing matrices of size $\mathcal{O}(N^3)$ while the hierarchical low rank decomposition only requires storing matrices of size $\mathcal{O}(N)$. For this reason, we divide the vector of second order moments into smaller batches $\mathcal{S}_i$ of size $\mathcal{O}(m+n)$ sampled at random, and minimize the resulting augmented Lagrangians sequentially as explained in section~\ref{subsamplingAffCons}. Each jump in the figures above corresponds to a resampling of the moments. For each resampling of the moments, we reset the relative weight of the trace with respect to the misfit, whence the jump occuring at the transition between two batches. Here the moments of $\bs X_0$ are controlled as $\delta \leq (X_0)_{ij}\leq 1$ with $\delta =0.25$ and the ranks of the hierarchical factorization are set as $r_1=r_2=2$. For large matrices, only the trace relations~\eqref{traceRelation3} and~\eqref{traceRelation2} are represented as the total symmetry constraints are too expensive to compute and are thus not enforced. }
\end{figure}

\begin{figure}\centering
\includegraphics[width=.7\linewidth, trim=0cm 3cm 0cm 3cm,clip=true]{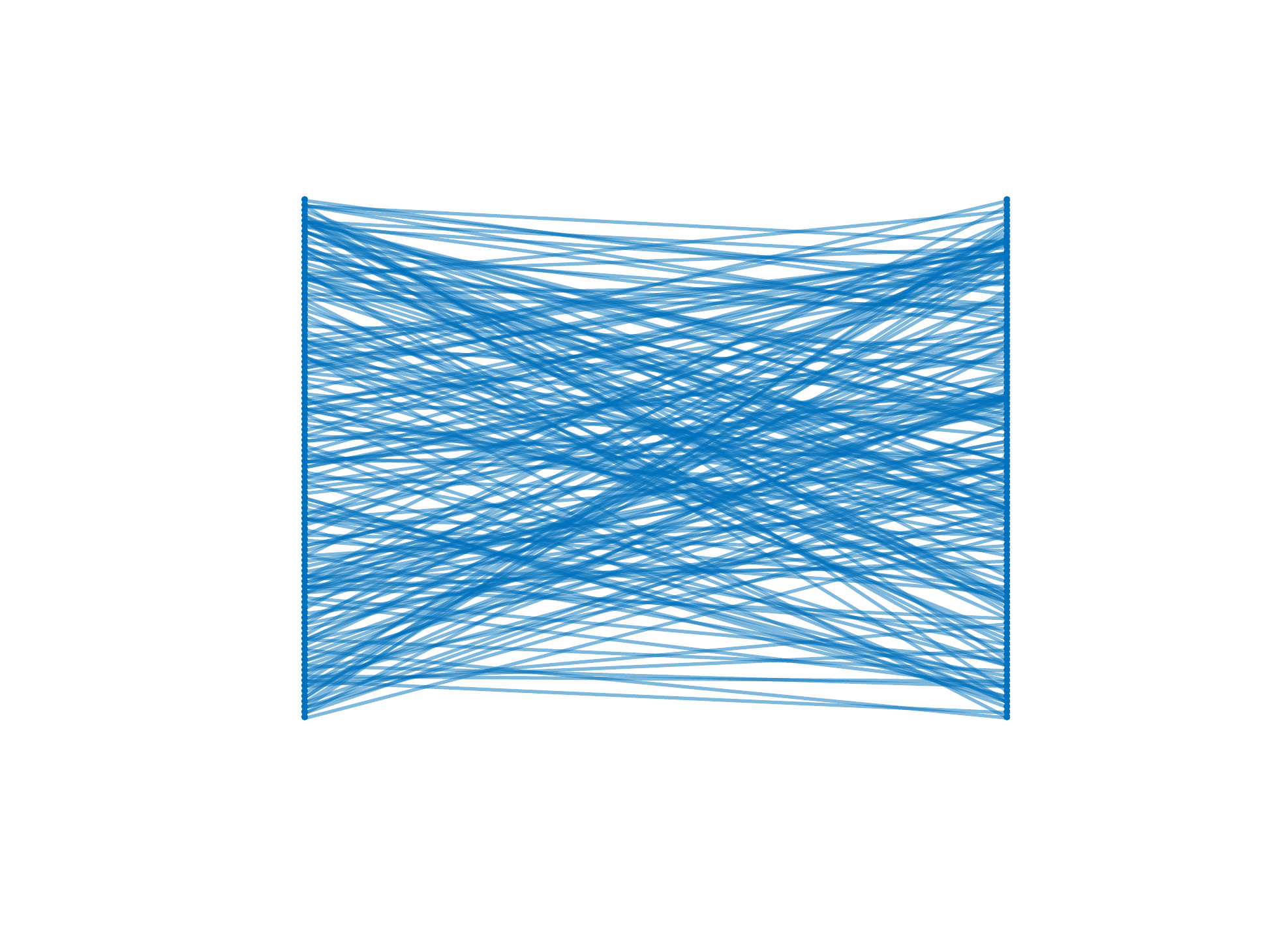}
\caption{\label{hugeExampleBipartite} Bipartite graph corresponding to the $100\times 100$ example of Fig.~\ref{hugeExample2}. The number of edges is $\mathcal{O}(N)\approx 200$. The graph is generated at random while required to span the whole set of vertices minimally.}
\end{figure}

\section{\label{r1tc}Stable completion of rank-one tensors}
 


Theorem~\ref{maintheorem} and~\ref{noisycase} both have a direct extension to the tensor completion problem. Given a rank one $d$-tensor $T\in \mathbb{R}^{n^{d}}$, one can always write $T = x_1\otimes x_2\otimes \ldots\otimes x_d$ with $(x_2)_1 = (x_3)_1 = \ldots = (x_{d-1})_1 = 1$. To this tensor, one can associate a $d$-uniform hypergraph $\mathcal{H}(\mathcal{V},\mathcal{E})$ whose set of vertices is given by the set of indices associated to each dimension and whose edges are defined from the measurements $\mathcal{P}_\Omega(T)$. For this hypergraph, we consider the following property.

\begin{definition}[Definition 1 in~\cite{berke2009propagation}]
Let $\mathcal{H}(\mathcal{V},\mathcal{E})$ be a $d$-uniform hypergraph on $N_1\times N_2\times\ldots N_d=|\mathcal{V}|$ vertices. A sequence $E_1,\ldots, E_{K}\in \mathcal{E}$ of hyperedges is called a propagation sequence if for any $1\leq \ell\leq K$, $|E_{\ell+1}\bigcap_{i=1}^\ell E_\ell| = d-1$. If the hypergraph has a propagation sequence, then it is called propagation connected.
\end{definition}

Stable deterministic completion of rank-one, propagation-connected tensors directly follows from the definition of propagation connectivity and the normalization $(x_2)_{1} = \ldots = (x_{d-1})_{1} $ of the vectors in the decomposition. Indeed, as in section~\ref{SectionIntroduction}, let $\z= (\bs x_1,\bs x_2,\ldots, \bs x_{d-1})$ denote the concatenation of the monomials arising from the tensor decomposition. For every monomial $z_L$ there always exists a sequence of hyperedges, each defined from its corresponding set of indices in $[N_1]\times[N_2]\times\ldots\times [N_d]$, such that $E_1 = (1,1,\ldots, i_1)$, $E_2 = (1,\ldots,1, i_1, i_2)$, $\ldots$ and $|E_i\cap E_{i+1} | = d-1$. Let us denote by $z_{i_1},z_{i_2}, \ldots, z_{L}$ the variables defined from the chain as $i_1 \in E_{1}$ and $i_k\in E_{k-1}\cap E_{k}$ for $k=2,\ldots, L$ with $i_L = L$. Assume that we can express the polynomials $z_{i_k}-(z_0)_{i_k}$ for $k=1,\ldots L-1$, then the $k^{th}$ canonical polynomial $z_{L} - (z_0)_L$ can be expressed from $z_{i_{L-1}}$ and the constraints corresponding to the edges $E_L$ and $E_{L-1}$ as 
\begin{align*}
(z_L - (z_0)_L)\prod_{i\in E_{L-1}} (z_0)_i  \; = & \; z_{E_{L-1}\setminus (E_L\cap E_{L-1})}\left(\prod_{i\in E_L} z_i -  \prod_{i\in E_L} (z_0)_i\right)- z_{L}\left(\prod_{i\in E_{L-1}} z_i - \prod_{i\in E_{L-1}} (z_0)_i \right)\\
&+   z_{E_{L-1}\setminus (E_L\cap E_{L-1})}\prod_{i\in E_L} (z_0)_i
\end{align*}
This discussion naturally leads to the following corollary (a corresponding stability result can be derived).
\begin{corollary}
Let $T\in \mathbb{R}^{N_1\times\ldots\times N_d}$ denote an order $d$ rank-one tensor. Assume that we are given the entries $T_{i_1,\ldots,i_d}$ for $(i_1,i_2,\ldots,i_d)\in \Omega$. Further assume that the hypergraph corresponding to $\Omega$ is propagation connected. Then the tensor can be efficiently completed through $d$ rounds of semidefinite programming relaxation with minimization of the trace norm of the moments matrix.   
\end{corollary}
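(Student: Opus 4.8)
The plan is to follow the three-part strategy of the proof of Theorem~\ref{maintheorem}, lifted from two rounds to $d$ rounds. With $d$ rounds of relaxation the moment matrix $\bm M$ is indexed by all monomials in $\z = (\bs x_1,\ldots,\bs x_{d-1})$ of degree at most $d$, so that $\bm M$ is a proxy for $\bm M_0 = \bs m_0 \bs m_0^T$ with $(\bs m_0)_{\bs \gamma} = \z_0^{\bs \gamma}$, $|\bs \gamma| \le d$, and the trace supplies the diagonal monomials $\sum_{|\bs \alpha|\le d}\z^{2\bs \alpha}$. The measurement constraints are now the degree-$d$ polynomials $h_E(\z) = \prod_{i\in E} z_i - \prod_{i\in E}(z_0)_i$, one per hyperedge $E \in \mathcal{E}$. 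Propositions~\ref{propositionUniqueness}, \ref{sosissdp} and~\ref{certificateuniquePoly} carry over once $2$ is replaced by $d$ and $4$ by $2d$: it suffices to exhibit a dual sum-of-squares polynomial
\begin{equation}
q(\z) = \sum_{|\bs \gamma|\le d}(\z^{\bs \gamma} - \z_0^{\bs \gamma})^2 = \sum_{|\bs \alpha|\le d}\z^{2\bs \alpha} - \rho + \sum_{E}h_E(\z)\lambda_E(\z),
\label{tensorCert}
\end{equation}
with $\deg\lambda_E \le d$, $\rho = \sum_{|\bs \alpha|\le d}\z_0^{2\bs \alpha}$, and $q(\z_0)=0$, together with injectivity on the subspace $T$ of the linear map encoding the constraints and the structure of $\bm M$.

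First I would construct the first-order canonical squares. As in Section~\ref{sec:proof}, each square $(\z^{\bs \gamma} - \z_0^{\bs \gamma})^2$ with $|\bs \gamma|\le d$ is assembled from the trace term $(\z^{\bs \gamma})^2$, the constant $(\z_0^{\bs \gamma})^2$ contributing to $\rho$, and a cross term $-2\z_0^{\bs \gamma}\z^{\bs \gamma} + 2(\z_0^{\bs \gamma})^2$ that must be produced from the ideal $\mathcal{I}(\{h_E\})$. For $|\bs \gamma|=1$ this is exactly where propagation connectivity enters: for every first-order variable $z_L$ the hypergraph supplies a propagation sequence $E_1,\ldots,E_K$ joining $z_L$ to the normalized entries $(x_2)_1=\cdots=(x_{d-1})_1=1$, and the telescoping identity displayed just before the statement---whose prefactor $\prod_{i\in E_{L-1}}(z_0)_i$ is a \emph{constant}---lets me express $z_L-(z_0)_L$ as a combination of the $h_{E_k}$ multiplied only by single variables. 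Hence the first-order cross terms lie in $\mathcal{I}_{d+1}$, and since $d+1 \le 2d$ the degree budget of~\eqref{tensorCert} is respected.

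Next I would build the higher-order squares $2\le|\bs \gamma|\le d$ by the same peeling recursion used in~\eqref{secondOrderDec1}--\eqref{secondOrderDec2}: writing $\z^{\bs \gamma}=\z^{\bs \beta}z_j$ with $|\bs \beta|=|\bs \gamma|-1$, I express $-2\z_0^{\bs \gamma}(\z^{\bs \gamma}-\z_0^{\bs \gamma})$ in terms of the already-constructed decomposition of $\z^{\bs \beta}-\z_0^{\bs \beta}$ and of $z_j-(z_0)_j$, with the scalars $\z_0^{\bs \beta}$ and $(z_0)_j$ absorbing the coefficient factors. Tracking the degree inductively, the $k$-th order square uses ideal multipliers of degree at most $d$, so $h_E\lambda_E$ never exceeds degree $2d$. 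Injectivity on $T$ then follows the argument at the end of Section~\ref{sec:proof}: normalization kills the first component, and the constraints attached to a propagation sequence force the remaining first-order components of any $\bm H = \bs m_0\bs v^T + \bs v\bs m_0^T \in T\cap\mathrm{Null}(\mathcal{A})$ to vanish one hyperedge at a time (using $(z_0)_i\neq 0$), after which the structural constraints zero out the higher-order block.

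The main obstacle I expect is the degree bookkeeping in the higher-order recursion. Unlike the matrix case, where the top order is $2$ and two explicit identities suffice, here one must propagate through $d-1$ orders while guaranteeing that the peeling step (the analogue of~\eqref{secondOrderDec2}) never raises the multiplier degree beyond $d$; the constant-prefactor structure of the propagation identity is precisely what prevents a geometric blow-up, but making the induction airtight---and confirming that the $(|\mathbb{N}_d^K|-1)$ squares are linearly independent, so the rank condition of Proposition~\ref{propositionUniqueness} holds---is the delicate step. A corresponding Lipschitz-stability statement would then follow by repeating the perturbation analysis of Section~\ref{sec:stability}, with the chain length $m+n$ replaced by the length of the propagation sequence.
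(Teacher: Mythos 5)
Your proposal follows essentially the same route as the paper: the paper's own argument for this corollary consists precisely of the propagation-connectivity observation and the displayed telescoping identity with constant prefactor $\prod_{i\in E_{L-1}}(z_0)_i$, from which it asserts the result follows by the machinery of Theorem~\ref{maintheorem}. Your elaboration of the higher-order peeling recursion and the degree budget $d+k\le 2d$ is consistent with (and somewhat more explicit than) what the paper writes, so the approaches coincide.
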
	

\appendix
\section{\label{gradientFunctionCalculation}Hierarchical low rank gradient and function}

Before introducing the compressed function and gradient resulting from the multi-level low rank encoding of the moments matrix, recall that we normalize the first entry of $\bs x$ and work with the matrix 
\begin{align}
\bs X &= \bs y\bs (1,\bs x^T) = \left(\begin{array}{cccc}
y_1 & x_1 y_1 & \ldots & x_n y_1\\
y_2 & x_1 y_2 & \ldots &x_{n}y_2\\
\vdots & &  &\\
y_m & x_1 y_m & \ldots & y_m x_n
\end{array}\right)\label{normalization1}
\end{align}
The trace can be computed efficiently as 
\begin{align*}
\mbox{Trace}(\bm M) &= \|\bs \alpha\|^2 + \|\bs R\|^2_F + \sum_{r=1}^{R_1} \langle \Pi_r,\Pi_r\rangle \\
& = \|\bs \alpha\|^2 + \|\bs R\|^2_F + \sum_{r=1}^{R_1} \langle \sum_{r'=1}^{R_2} S_{r,r'}S_{r,r'}^T,\sum_{r'=1}^{R_2} S_{r,r'}S_{r,r'}^T\rangle\\
& = \|\bs \alpha\|^2 + \|\bs R\|^2_F + \sum_{r=1}^{R_1} \sum_{k=1}^{R_2}\sum_{\ell=1}^{R_2}  |\langle S_{r,k},S_{r,\ell}\rangle|^2
\end{align*}
The gradient for the trace can be computed efficiently as 
$$\partial_\alpha = 2\alpha,\quad \partial_R = 2R\quad \mbox{and}\quad \partial_{S_{r,k}} = 4 \sum_{r'=1}^{R_2} S_{r,r'}S^T_{r,r'}S_{r,k}.$$
There are two sets of structural constraints. The first set enforces the equality between $RR^T$ and $\sum_{r=1}^{R_1} \alpha_r \Pi_r =  \sum_{r=1}^{R_1} \alpha_r \sum_{k} S_{r,k}S_{r,k}^T$. This first set can be expressed compactly as the following Frobenius contribution
\begin{align*}
\|RR^T - \sum_{r=1}^{R_1}\alpha_r(\sum_{k=1}^{R_2} S_{r,k}S_{r,k}^T)\|_F^2 &= \|R^TR\|_F^2+ \sum_{r= 1}^{R_1}\sum_{r'=1}^{R_1}\alpha_r\alpha_{r'} \left(\sum_{k=1}^{R_2} \sum_{k'=1}^{R_2}
|\langle S_{r,k}, S_{r',k'}\rangle|^2\right)\\
& -2 \sum_{r=1}^{R_1} \alpha_r \left(\sum_{k=1}^{R_2}\|S_{r,k}^TR\|_F^2\right).
\end{align*}
The resulting gradient contribution can be computed efficiently as,
\begin{align*}
\partial_R & = 2 \left[(RR^T - \sum_{r=1}^{R_1} \alpha_r(\sum_{k=1}^{R_2} S_{r,k}S_{r,k}^T))R+ R^T(RR^T - \sum_{r=1}^{R_1} \alpha_r(\sum_{k=1}^{R_2} S_{r,k}S_{r,k}^T))\right]\\
 \partial_{S_{r,k}} &= - 2 \alpha_r \left(RR^T - \sum_{r=1}^{R_1}\alpha_r \left(\sum_{k=1}^{R_2} S_{r,k}S_{r,k}^T\right)\right) S_{r,k} - 2 \alpha_r S_{r,k}^T \left(RR^T - \sum_{r=1}^{R_1}\alpha_r \left(\sum_{k=1}^{R_2} S_{r,k}S_{r,k}^T\right)\right) \\
 \partial_{\alpha_r} & = -2 \left\langle RR^T - \sum_{r=1}^{R_1}\alpha_r\left(\sum_{k=1}^{R_2}S_{r,k}S_{r,k}^T\right), \sum_{k=1}^{R_2} S_{r,k}S_{r,k}^T\right\rangle
\end{align*}
The second set of structural constraints enforces equality between corresponding third and fourth order monomials. As an example, we have $\Pi_{i,j}R_k^T = \Pi_{i,k}R_j^T$ or similarly $\Pi_{i,j}R_k^T = \Pi_{i,k}R_j^T$. Those constraints are first expressed through permutations of third and fourth indices. When dealing with third and fourth order monomials equivalences, only some of the permutation have to be explicitly enforced. The others are naturally encoded through the positive semidefinite constraint. The explicit ones are listed below. For third order monomials we have
\begin{align}
(z_iz_j)z_k & =  (z_iz_k)z_j\label{Order3struct1}\\
& = (z_jz_k)z_i\label{Order3struct2}
\end{align}
Equivalently, for fourth order monomials, we can only retain the following relations,
\begin{align}
(z_iz_j)(z_k z_\ell) & =  (z_iz_k)(z_jz_\ell)\label{TotalSym41}\\ 
& = (z_iz_\ell)(z_jz_k)\label{TotalSym42}  
\end{align}
Let each of the permutations for third and fourth order monomials that encode the structural constraints be denoted as $\pi^{(3)}$ and $\pi^{(4)}$ where we let $\pi^{(3)}\;:\;(m+n-1)^3\;\mapsto\; (m+n-1)^3$ with $\pi^{(3)}(i_1,i_2,i_3) = (\pi^{(3)}_1, \pi^{(3)}_2, \pi^{(3)}_3)$ and similarly for $\pi^{(4)}$. The resulting constraints in the framework of formulation~\eqref{LowrankEncoding} can read as 
\begin{align}
\min \quad & \left\|\sum_{\ell=1}^{R_1}  \Pi_\ell (i_1,i_2)R(i_3,\ell) - \sum_{\ell=1}^{R_1}\Pi_\ell (\pi^{(3)}_1,\pi^{(3)}_2)R(\pi^{(3)}_3,\ell)\right\|^2\label{ThirdOrderPerm}
\end{align}
Fourth order relations can be expressed in exactly the same way,
\begin{align}
\min \quad & \left\|\sum_{\ell=1}^{R_1} \Pi_\ell (i_1,i_2)\Pi_\ell(i_3,i_4) - \sum_{\ell=1}^{R_1}\Pi_\ell (\pi^{(4)}_1,\pi^{(4)}_2)\Pi_\ell(\pi^{(4)}_3,\pi^{(4)}_4)\right\|^2\label{FourthOrderPerm}
\end{align}
If we expand the second order low rank factorizations, the two structural contributions~\eqref{ThirdOrderPerm} and~\eqref{FourthOrderPerm} respectively read as 
\begin{align}
&\left\|\sum_{\ell=1}^{R_1} \left(\sum_{k=1}^{R_2} S_{\ell, k}[i_1]S_{\ell, k}[i_2]\right)R[i_3,\ell]- \sum_{\ell=1}^{R_1}\left(\sum_{k=1}^{R_2} S_{\ell, k}[\pi^{(3)}_1]S_{\ell, k}[\pi^{(3)}_2]\right)R[\pi^{(3)}_3,\ell]\right\|^2_F
\end{align}
as well as 
\begin{align}
\min \quad & \left\|\sum_{\ell=1}^{R_1} \left(\sum_{k=1}^{R_2} S_{\ell, k}[i_1] S_{\ell, k}[i_2]\right) \left(\sum_{k'=1}^{R_2} S_{\ell, k'}[i_3] S_{\ell, k'}[i_4]\right)\right.\nonumber \\ 
&\left.- \sum_{\ell=1}^{R_1}  \left(\sum_{k=1}^{R_2} S_{\ell, k}[\pi^{(4)}_1]S_{\ell, k}[\pi^{(4)}_2]\right)\left(\sum_{k'=1}^{R_2} S_{\ell, k'}[\pi^{(4)}_3]S_{\ell, k'}[\pi^{(4)}_4]\right)\right\|^2
\end{align}
This equivalence of monomials is the most expensive step in the minimization. We let $E^3$ and $E^4$ be defined as
\begin{align}
E_3 & = \sum_{\ell=1}^{R_1}  \Pi_\ell (i_1,i_2)R(i_3,\ell) - \sum_{\ell=1}^{R_1}\Pi_\ell (\pi^{(3)}_1,\pi^{(3)}_2)R(\pi^{(3)}_3,\ell)\\
E_4 & = \sum_{\ell=1}^{R_1} \Pi_\ell (i_1,i_2)\Pi_\ell(i_3,i_4) - \sum_{\ell=1}^{R_1}\Pi_\ell (\pi^{(4)}_1,\pi^{(4)}_2)\Pi_\ell(\pi^{(4)}_3,\pi^{(4)}_4)
\end{align}
for which the contributions of~\eqref{ThirdOrderPerm} and~\eqref{FourthOrderPerm} to the gradient accumulate for each $\ell$ and pairs of 3-tuple of indices $\{(i_1,i_2,i_3), (\pi_1^{(3)},\pi_2^{(3)},\pi_3^{(3)} )\}$ appearing in the set of structural constraints as 
\begin{alignat}{2}
&\partial_R [i_3,\ell] \leftarrow \partial_R [i_3,\ell] +\Pi_{\ell}(i_1,i_2)E_3,&&\quad \partial_R [\pi^{(3)}_3,\ell] \leftarrow \partial_R [i_3,\ell] +\Pi_\ell (\pi^{(3)}_1,\pi^{(3)}_2) E_3\\
&\partial_{\Pi_\ell} [i_1,i_2] \leftarrow \partial_{\Pi_\ell} [i_1,i_2]+ R[i_3,\ell]E_3,&&\quad \partial_{\Pi_\ell} (\pi^{(3)}_1,\pi^{(3)}_2)\leftarrow \partial_{\Pi_\ell} (\pi^{(3)}_1,\pi^{(3)}_2)+R(\pi^{(3)}_3,\ell)E_3
\end{alignat}
Accordingly, for the fourth order contribution, we simply accumulate the contributions arising from each of the norms in~\eqref{FourthOrderPerm} for each pair of four-tuples $\{(i_1,i_2,i_3,i_4), (\pi^{(4)}_1,\pi^{(4)}_2,\pi^{(4)}_3,\pi^{(4)}_4)\}$ and each rank index $\ell$ as
\begin{alignat}{2}
&\partial_{\Pi_\ell} [i_1,i_2] \leftarrow \partial_{\Pi_\ell} [i_1,i_2]+\Pi_{\ell}(i_3,i_4)E_4,&&\quad \partial_{\Pi_\ell} [\pi^{(4)}_1,\pi^{(4)}_2] \leftarrow \partial_{\Pi_\ell} [\pi^{(4)}_1,\pi^{(4)}_2]-\Pi_\ell (\pi^{(3)}_1,\pi^{(3)}_2) E_4\\
&\partial_{\Pi_\ell} [i_3,i_4] \leftarrow \partial_{\Pi_\ell} [i_3,i_4] + \Pi_{\ell}(i_1,i_2)E_4,&&\quad \partial_{\Pi_\ell} [\pi^{(4)}_3,\pi^{(4)}_4] \leftarrow \partial_{\Pi_\ell} [\pi^{(4)}_3,\pi^{(4)}_4]-\Pi_\ell (\pi^{(3)}_1,\pi^{(3)}_2) E_4
\end{alignat}
The accumulations on the low rank factors $\Pi_\ell$, $\ell=1,\ldots R_1$ expand as accumulations on each of their low rank factorizations $\partial_{\Pi_\ell [i,j]} = \partial_{S_\ell[i]S\ell[j]}$ from which we get each of the separate partials using the chain rule as $\partial_{S_{\ell}[i]} = \partial_{\Pi_\ell [i,j]} S_\ell[j]$ and equivalently $\partial_{S_{\ell}[j]} = \partial_{\Pi_\ell [i,j]} S_\ell[i]$.

Following the normalization~\eqref{normalization1}, we can now express the original constraints together with their higher order extensions. Those sets of constraints read as follows. We first decompose the map $A$ into the component $A_0\in \mathbb{R}^{N\times m}$ acting on the first column and the remaining part $A_1\in \mathbb{R}^{N\times n-1}$ acting on the matrix $RR^T$. Each of the constraint are encoded by means of appropriate matrices $A_{ij} = (A_0)_{ij} + (A_1)_{ij}$ as 
\begin{align}
\|\mathcal{A}(M) - b\|_2^2 = \sum_{k=1}^{|\Omega|}|\langle (A_0)_{k}, R_x\alpha^T\rangle +  \langle (A_1)_{k}, R_xR_y^T\rangle  b_{k}|^2\label{affineConstraints1}
\end{align}
We will use corresponding linear maps $\mathcal{A}_0$ and $\mathcal{A}_1$ to encode the matrix constraints efficiently. We thus have $\mathcal{A}_0(x) = A_0x $ and $\{\mathcal{A}_1(X)\}_k = \langle (A_1)_k, X\rangle$. Then for any vector $b\in \mathbb{R}^{N}$ $\mathcal{A}_0^*b = A_0^*b = \sum_{k=1}^N (a_0)_k b_k$ where $(a_0)_k$ denotes the transpose of the $k^{th}$ row of $A_0$ and $\mathcal{A}_1^* b = \sum_{k=1}^N (A_1)_k b_k$. The gradient for those constraints reads
\begin{align*}
\partial_\alpha &= 2R_x^T\mathcal{A}_0^*(\mathcal{A}(M)- b) - 4\alpha(b^T(\mathcal{A}(M)- b) )\\
\partial_{R_x}  & = 2\mathcal{A}_0^*(\mathcal{A}(M)- b) \alpha^T + 2\tilde{\mathcal{A}}^*(\mathcal{A}(M)- b) R_y\\
\partial_{R_y}  & = R_x^*\tilde{\mathcal{A}}^*(\mathcal{A}(M)- b) 
\end{align*}
We call higher order affine constraints the constraints derived from multiplying any  of the constraints in $\{(\mathcal{A}(X)-b)_j\}_{j=1}^J = \{h_j(x,y)\}_{j=1}^{J}$ by any of the monomials of degree at most $t-\mbox{deg}(h_j)$ for a relaxation of order $t$ (a.k.a the $t^{th}$ round of the hierarchy). Those higher order constraints can be encoded simply by multiplying the matrix of (pseudo)-moments by each of the constraints vectors of coefficients~\cite{laurent}. Since the moments matrix is low rank, for any vector $a\in \mathbb{R}^{|\mathbb{N}_n^2|}$ this product reads very simply as $a^T M(m) = \langle a, [\alpha^T,\;R^T,\; \Pi^T]\rangle [\alpha^T,\;R^T,\; \Pi^T]^T = 0$. For a general set of affine constraints, the decomposition of $X$ introduced in~\eqref{normalization1} and the decomposition of $\mathcal{A}$ used in~\eqref{affineConstraints1}, stable minimization of the higher order affine constraints reads
\begin{align*}
\min\quad &\left\|\sum_{\ell=1}^{R_1}(\mathcal{A}_0R^x_{\ell} + \mathcal{A}(\Pi^{xy}_\ell) - b\alpha_\ell )(\alpha^T,\;R^T,\;\Pi^T)_\ell \right\|_F^2\label{HigherOrderCons1} \\
& = \left\langle \left(\mathcal{A}_0(R_{k}^x) + \mathcal{A}\left(\sum_{r=1}^{R_2}S^x_{k,\ell}(S^y_{k,\ell})^T\right) - b \alpha_k\right)_{\ell=1}^{R_1} , \left(\mathcal{A}_0(R_{k}^x) + \mathcal{A}\left(\sum_{r=1}^{R_2}S^x_{k,\ell}(S^y_{k,\ell})^T\right) - b \alpha_k\right)_{\ell=1}^{R_1}T^T T\right\rangle \\
& = \langle \Delta^T \Delta, T^T T\rangle\\
& = \sum_{\ell=1}^{R_1}\sum_{\ell'=1}^{R_1} (\Delta^T\Delta)[\ell,\ell']\left(\langle \sum_{k=1}^{R_2}S_{\ell,k}S_{\ell,k}, \sum_{k'=1}^{R_2}S_{\ell',k'}S_{\ell',k'}\rangle\right)\\
\end{align*}
Here we let $S^x_{k,\ell}, S^y_{k,\ell}$ denote the first and second blocks of the low rank factors $S_{k\ell}\in \mathbb{R}^{m+n-1\times R_2}\}$ of each matrix $\Pi_\ell$. $\Delta^T\Delta \in \mathbb{R}^{R_1\times R_1}$. We thus have $S^x_{\ell,k}\in \mathbb{R}^{m\times R_1}$, $S^y_{\ell,k}\in \mathbb{R}^{m\times R_1}$. The definition of $\Pi^{xy}$ follows from those ideas,
$$\Pi^{xy}_\ell = \sum_{k=1}^{R_2} S_{\ell,k}^x(S^y_{\ell,k})^T.$$
Finally, $R_x$ derives from the decomposition of $R^{(m+n-1)\times R_1}$ into $R_x\in \mathbb{R}^{m\times R_1}$ and $R_y^{(n-1)\times R_1}$ with $R = [R_x^T,\; R^T_{y}]^T$. The  contributions to the gradient are given by deriving each side and noting that
\begin{align}
\langle T\Delta^T\Delta, T\rangle &= \langle \alpha\Delta^T\Delta, \alpha\rangle + \langle R\Delta^T\Delta, R\rangle + \sum_{\ell=1}^{R_1} (\Pi\Delta^T\Delta)_\ell \Pi_\ell\\
&  = \sum_{\ell=1}^{R_1} \left\langle \mbox{Mat}(\Pi\Delta^T\Delta)_\ell ,\left(\sum_{k=1}^{R_2} S_{\ell,k}S_{\ell,k}^T\right)\right\rangle
\end{align}
From which we have $\partial_{S_{\ell,k}} =2 \mbox{Mat}\left((\Pi\Delta^T\Delta)_\ell\right)S_{\ell,k} + 2 \left((S_{\ell,k})^T\mbox{Mat}\left((\Pi\Delta^T\Delta)_\ell\right)\right)^T$,
$\partial_R=2R\Delta^T\Delta $ and $\partial_\alpha= 2\alpha\Delta^T\Delta $. For the $\Delta^T\Delta$ term, a similar approach yields $\partial R_x = 2\mathcal{A}_0^*\Delta T^TT$, $\partial_\alpha = -2b^T \Delta T^TT$. For the partials with respect to $S_{\ell,k}$ we use
\begin{align}
\langle\Pi^{xy},\mathcal{A}^*\Delta T^TT\rangle &=\sum_{\ell=1}^{R_1}\left\langle \sum_{k=1}^{R_2} S^x_{\ell,k}\left(S_{\ell,k}^y\right)^T,\left(\mathcal{A}^*\Delta T^TT\right)_\ell\right\rangle
\end{align}
From which we can derive $\partial S_{\ell,k}^x =2\mbox{Mat}\left(\mathcal{A}^*\Delta T^TT\right)_\ell S_{\ell,k}^y $ as well as $\partial S_{\ell,k}^y =2\left(S_{\ell,k}^x\right)^T\mbox{Mat}\left(\mathcal{A}^*\Delta T^TT\right)_\ell $. In the expressions above, we use $\mbox{Mat}(\bs x)$ to denote the usual vector to matrix operator that turns the vector $\bs x= [x_1^T,\ldots x_n^T]^T$ into the matrix $M = [x_1,\ldots x_n]$. Note that the product $\Pi\Delta^T\Delta$ can be computed efficiently as 
\begin{align}
(\Pi\Delta^T\Delta)_{\bullet, \ell} &= \sum_{\ell'= 1}^{R_1} \left(\sum_{k=1}^{R_2}S_{\ell',k}S_{\ell',k}^T\right)(\Delta^T\Delta)_{\ell',\ell}
\end{align}
From which, the expression of the partials follow as,
\begin{align}
2 \mbox{Mat}\left((\Pi\Delta^T\Delta)_\ell\right)S_{\ell,k} & = 2\sum_{\ell'= 1}^{R_1} \left(\sum_{k'=1}^{R_2}S_{\ell',k'}\langle S_{\ell',k'}^T S_{\ell,k}\rangle\right)(\Delta^T\Delta)_{\ell',\ell}
\end{align}
This expression just computes a projection of the low rank factors onto the subspace generated by each low rank decomposition and can be efficiently carried out by stacking all those low rank factors in a matrix of size $\mathcal{O}(m+n-1\times R_2)$ premultiplying the matrix by the $\ell$ column of $\Delta^T\Delta$ and then applying the projector and summing over the $\ell'$ indices.

\bibliography{biblio.bib}
\bibliographystyle{abbrv}


\end{document}